\documentclass[reqno,12pt]{amsart}
\usepackage{amsmath, amssymb, amsthm, amsfonts} 
\usepackage[english]{babel}
\usepackage{bbm}
\usepackage{graphicx}
\usepackage{url}
\usepackage{soul}
\usepackage{epstopdf}
\usepackage[ruled,vlined]{algorithm2e}
\usepackage[a4paper,bindingoffset=0.5cm,left=2cm,right=2cm,top=2.5cm,bottom=2cm,footskip=.8cm]{geometry}
\usepackage{rotating}
\usepackage{amsbsy,enumerate}
\usepackage{comment}
\usepackage{mathrsfs} 
\usepackage{enumitem}
\usepackage{cancel}

\newcommand{\bs}{\boldsymbol}

\newcommand{\vb}{\vspace{3.2mm}}

\newcommand{\vertiii}[1]{{\left\vert\kern-0.25ex\left\vert\kern-0.25ex\left\vert #1 \right\vert\kern-0.25ex\right\vert\kern-0.25ex\right\vert}}
\setlength{\parindent}{0pt}
\allowdisplaybreaks
\usepackage{hyperref}

\newcommand{\p}{\mathbb{P}}


\allowdisplaybreaks

\newtheorem{lemma}{Lemma}
\newtheorem{corollary}{Corollary}

\newtheorem{theorem}{Theorem}
\newtheorem{remark}{Remark}

\newtheorem{proposition}{Proposition}


\usepackage[utf8]{inputenc}
\usepackage{type1cm}         
\usepackage{multicol}        
\usepackage[bottom]{footmisc}

\usepackage{newtxtext}       %
\usepackage{newtxmath}       


\begin{document}

	\title[Risk theory in a finite customer-pool setting]{Risk theory in a finite customer-pool setting}
\author{Michel Mandjes {\tiny and} Dani\"el T. Rutgers}

	\begin{abstract}
    This paper investigates an insurance model with a finite number of major clients and a large number of small clients, where the dynamics of the latter group are modeled by a spectrally positive Lévy process. We begin by analyzing this general model, in which the inter-arrival times are exponentially distributed (though not identically), and derive the closed-form Laplace transform of the ruin probability. Next, we examine a simplified version of the model involving only the major clients, and explore the tail asymptotics of the ruin probability, focusing on the cases where the claim sizes follow phase-type or regularly varying distributions. Finally, we derive the distribution of the overshoot over an exponentially distributed initial reserve, expressed in terms of its Laplace-Stieltjes transform.

\vb

\noindent
{\sc Keywords.} Ruin theory $\circ$ {Cramér-Lundberg} model $\circ$ finite client pool $\circ$  Laplace transforms $\circ$ tail asymptotics $\circ$ overshoot distribution

\vb

\noindent
{\sc Affiliations.} MM is with the Mathematical Institute, Leiden University, P.O. Box 9512,
2300 RA Leiden,
The Netherlands. He is also affiliated with Korteweg-de Vries Institute for Mathematics, University of Amsterdam, Amsterdam, The Netherlands; E{\sc urandom}, Eindhoven University of Technology, Eindhoven, The Netherlands; Amsterdam Business School, Faculty of Economics and Business, University of Amsterdam, Amsterdam, The Netherlands. 

\noindent
DR is with the Mathematical Institute, Leiden University, P.O. Box 9512, 2300 RA Leiden, The Netherlands.

\noindent Date: {\it \today}.

\vb

\noindent
{\sc Corresponding author.} Dani\"el Rutgers
\vb

\noindent
{\sc Email.} \url{m.r.h.mandjes@math.leidenuniv.nl}, \url{d.t.rutgers@math.leidenuniv.nl}.

	\end{abstract}

 \maketitle

\section*{I. Introduction}
In conventional ruin theory, the typical setting involves an infinitely large pool of independent and essentially homogeneous clients. Within this framework, the {\it Cramér-Lundberg} model \cite{AA, MB} serves as a natural representation of an insurance firm’s surplus process. Starting from an initial surplus level, claims arrive according to a Poisson process, their sizes are modeled as independent and identically distributed random variables, and the firm receives premium payments that increase linearly over time. This cornerstone model, playing a central role in actuarial science, and various of its variants, has been extensively studied. While various metrics are examined in the actuarial literature, the ruin probability of the insurance firm is arguably the most important quantity of interest.

In the present paper we consider the setting in which we distinguish between the insurance firm's (finitely many) `major clients' and its `small clients'. A claim submitted by a major client can be interpreted as its default, as the client effectively exits the system following the submission of the claim. The dynamics of the small clients are modelled via the flexible class of spectrally-positive L\'evy processes \cite{DM, KYP}, i.e., L\'evy processes without negative jumps, covering compound Poisson processes and Brownian motion as special cases. Such L\'evy processes are, thanks to a cental-limit type of argumentation, particularly suited to model the claims submitted by a large aggregate of clients; the contribution of large numbers of clients converge, after an appropriate centering and scaling, in the finite-variance regime to Brownian motion, and in the infinite-variance regime to alpha-stable L\'evy motion \cite{WHI}. 

\medskip

In this paper we analyze several quantities that play a pivotal role in ruin theory. The first key object that we study is the insurance firm's ruin probability, defined as the probability that, with an initial surplus $u>0$, the surplus level falls below 0. We distinguish two cases: ruin occurring before a predefined time horizon, and ruin occurring over an infinite time horizon (in the latter case, assuming the net profit condition is satisfied). In the present work we succeed in uniquely characterizing these objects in terms of their associated Laplace transform (with respect to $u$, that is); numerical algorithms are available to invert these transforms \cite{AW, dI}. In the second place, relying on these results for the Laplace transform of the ruin probability, we provide expressions for the asymptotics of the ruin probability, valid in the regime that the initial reserve level $u$ grows large. We do so for the claim sizes being of phase type, which form a class of light-tailed distributions by which any random variable on the positive half line can be approximated arbitrarily closely, as well as for heavy-tailed claim sizes of the regularly varying type. The third contribution concerns an analysis of the so-called `overshoot', which refers to the magnitude by which the surplus level drops below $0$ in case of ruin. 

\medskip

A model that is intimately related to the model studied in the present paper, is the {\it queueing} system with finitely many customers. In the most basic variant of this model, the customer's {\it arrival times} (rather than their interarrival times) are independent, identically distributed random variables. Early contributions from the queueing literature are \cite{GH, JS}; see also the survey \cite{HL}. In \cite{BET,BHL,HJW} the focus lies on the development of  diffusion approximations that are valid under a heavy-traffic scaling, whereas \cite{HON} performs a rare-event analysis by establishing a sample-path large deviations principle. Whereas these diffusion and large deviations results provide insight into the system's probabilistic behavior in specific asymptotic regimes, the recent papers \cite{BKM, MR}  present exact results, albeit in terms of transforms, the former on the workload and the latter on the number of customers; see, however, also the recent results in \cite{YOS}. 

Results for models similar to our ruin model can be found in e.g.\ \cite{DM, KMD}; it is observed that the net cumulative claim process in our setup can be interpreted as a specific spectrally-positive Markov additive process with a non-irreducible background process. There is a vast literature on extreme values attained by spectrally one-sided Markov additive processes, illustrative examples being \cite{DIKM,IBM}. In this context it is noted that in our paper we succeed in obtaining relatively explicit results, by exploiting the specific structure underlying our model; at an abstract level, we use that the transition matrix of the background process has a special upper triangular structure in which only the entries directly above the diagonal are positive.

\medskip

This paper is organized as follows. In Section II, we provide a detailed description of the model and outline the objectives of the paper. Section~III presents an exact analysis of the Laplace-Stieltjes transform of the running maximum of the net cumulative claim process, which is then used to derive the Laplace transform of the ruin probability. Sections~IV and~V discuss the tail asymptotics of the ruin probability for phase-type and regularly varying claim sizes, respectively. Section~VI offers a comprehensive analysis of the overshoot at ruin. Throughout the paper, our results are illustrated with numerical examples.

\section*{II. Model \& objectives}
In this work we consider an insurance model with a finite number of $m\in{\mathbb N}$ `large claims', which are due to `major clients', and a large aggregate of `smaller clients'.

\subsubsection*{Major clients.} In the basic variant of our arrival process, the arrival times of the claims by major clients, denoted by $A_1,\ldots,A_m$, are independent exponentially distributed random variables with parameter $\lambda>0$ (and hence mean $1/\lambda$). Let $A_{(1)} \leqslant A_{(2)} \leqslant \dots \leqslant A_{(m)}$ denote the corresponding order statistics, i.e., the arrival times in increasing order, adopting the convention that $A_{(0)} := 0$ and $A_{(m+1)} := \infty$. Recall that $A_{(1)}$, being the minimum of $m$ exponential random variables with parameter $\lambda$, is exponentially distributed with parameter $m\lambda$; more generally, the time between $A_{(n)}$ and $A_{(n+1)}$, for $n=0,..,m-1$, is exponentially distributed with parameter $\lambda(m-n)$. 

We can, however, deal with the slightly more general variant in which the time between $A_{(n)}$ and $A_{(n+1)}$ is exponentially distributed with parameter $\lambda^\circ_{m-n}$, for positive ${\bs\lambda}\equiv (\lambda_1,\ldots,\lambda_m)$.
In our analysis we consider ruin during an exponentially distributed time interval with mean $\beta^{-1}$ for some $\beta>0$. In the sequel we use the compact notation $\lambda_n\equiv\lambda_n(\beta):=\lambda^\circ_n+\beta$ for $n\in\{1,\ldots,m\}$.

The claim of the $n$-th arriving major client is distributed as a generic non-negative random variable $B_n$; the vector ${\bs B}\equiv (B_1,\ldots,B_m)$ consists of independent entries that are independent of the claim arrival times. The claim size of the $n$-th arriving major client is characterized via its Laplace-Stieltjes transform (LST): for $\alpha\geqslant 0$,
\[{\mathscr B}_n(\alpha):={\mathbb E}\,e^{-\alpha B_n};\]
we denote ${\bs {\mathscr B}}(\cdot)\equiv ({\mathscr B}_1(\cdot), \ldots, {\mathscr B}_m(\cdot)).$

\subsubsection*{Small clients.} 
We model the net cumulative claim process corresponding to the small clients by a spectrally-positive L\'evy process \cite{KYP}. We allow this spectrally-positive L\'evy process to depend on the number of large clients that are still `alive', i.e., that have not yet submitted a claim; this entails that we are able to model the effect of claims by large clients on the net cumulative claim process of smaller clients. The class of spectrally-positive L\'evy processes contains Brownian motion, which is, by virtue of the functional central limit theorem, a natural candidate to model the net cumulative claim process of relatively small clients. 

\noindent
Let $N(t)\in\{0,\ldots,m\}$ be the number of claims that have {\it not} been submitted by time $t$. For times $t$ at which $N(t) = n$, with $n\in\{0,\ldots,m\}$, we let $Z_n(t)$ denote the spectrally-positive L\'evy process that is `active', characterized via its Laplace exponent \cite[\S 2.7.1]{KYP}
\[\varphi_n(\alpha) =  \log \mathbb{E}\,e^{-\alpha Z_n(1)}= r_n\alpha + \frac{\sigma_n^2\alpha^2}{2}+ \int_0^\infty \left(1-e^{-\alpha x}+ \alpha x {\bs 1}_{\{x\leqslant 1\}}\right)\Pi_n({\rm d}x),
\]
with $r_n\in{\mathbb R}$, $\sigma_n\geqslant 0$, the measure $\Pi_n(\cdot)$ such that 
\[\int_0^\infty \min\{1,x^2\}\,\Pi_n({\rm d}x) <\infty,\]
and with $\psi_n(\cdot)$ denoting its (right-)inverse. In the expression for $\varphi_n(\cdot)$, the first term corresponds to the deterministic drift, the second term to a Brownian motion, and the third to the (positive) jumps.

\subsubsection*{Net cumulative claim process.} The net cumulative claim process $Y(\cdot)$ is defined as the cumulative claim process minus the premiums earned. As such, the ruin probability is the probability that $Y(\cdot)$ exceeds the initial reserve, say $u$.

Using the objects defined above, we can now define the net cumulative claim process pertaining to our model with major clients and small clients. For any $t\in[A_{(n)}, A_{(n+1)})$ and $n\in\{0,\dots,m\}$, we have
\begin{equation}
    Y(t) = Y(A_{(n)}) + Z_n(t) - Z_n(A_{(n)}),
\end{equation}
where $Y(0) = 0$ and $Y(A_{(n)}) = Y(A_{(n)}-) + B_n$ for $n\in\{1,\dots,m\}$.
Note that the primitives of this model consist of the number of major clients $m$, the claim arrival rates ${\bs \lambda}$, the vector-valued LST ${\bs {\mathscr B}}(\cdot)$, and the vector of Laplace exponents ${\bs \varphi}(\cdot)\equiv(\varphi_0(\cdot),\ldots, \varphi_m(\cdot))$. We write
\[Y(\cdot)\in {\mathscr L} [m, {\bs \lambda}, {\bs {\mathscr B}}(\cdot),{\bs \varphi}(\cdot)]. \]
As will become clear in our analysis, the case that, for some $n\in\{0,\ldots,m\}$, $\varphi_n(\cdot)$ corresponds to an increasing subordinator, has to be dealt with separately.  

In some parts of this paper, we specifically deal with the subclass of the spectrally-positive L\'evy processes that has neither a Brownian term nor jumps. More precisely, in that variant we have that $\sigma_n^2=0$ and $\Pi_n(\cdot)=0$ for any $n\in\{1,\dots,m\}$, so that $\varphi_n(\alpha) = r_n\alpha$; i.e., the L\'evy process is a deterministic drift. With ${\bs r} \equiv (r_0,\ldots,r_m)$, we then write, with mild abuse of notation,
\[Y(\cdot)\in {\mathscr L} [m, {\bs \lambda}, {\bs {\mathscr B}}(\cdot),{\bs r}]; \]
in line with our earlier remark concerning the special role of increasing subordinators, we have to distinguish between the case that $r_n$ is positive and that it is negative.

\subsubsection*{Objectives.} 
Define the {\it running maximum process} $\bar Y(\cdot)$ via $\bar Y(t):=\sup\{Y(s):s\in[0,t]\}.$
A main objective of this paper is to identify, with $T_\beta$ denoting an exponentially distributed random variable with parameter $\beta>0$ that is sampled independently from the process $Y(t)$, the LST of the running maximum over an interval of length $T_\beta$:
\[\pi_n(\alpha,\beta):= {\mathbb E}_ne^{-\alpha \bar Y(T_\beta)},\]
the subscript $n$ in ${\mathbb E}_n(\cdot)$ indicating the initial condition that $N(0)=n$, for $n\in\{0,\ldots,m\}$. We do this for the case of the small clients being modeled by general spectrally-positive L\'evy processes, i.e., $Y(\cdot)\in {\mathscr L} [m, {\bs \lambda}, {\bs {\mathscr B}}(\cdot),{\bs \varphi}(\cdot)]$. It is noted that $\pi_n(\alpha,\beta)$ can be interpreted as
\[ \int_0^\infty\int_0^\infty\beta e^{-\alpha x-\beta t}\,{\mathbb P}_n(\bar Y(t)\in {\rm d}x)\,{\rm d}t,\]
so that by applying double Laplace inversion the density ${\mathbb P}_n(\bar Y(t)\in {\rm d}x)$ is obtained. A computational technique for double Laplace inversion has been provided in \cite{dI}, and its efficacy in the context of L\'evy-driven models has been demonstrated in \cite{AdIM}.

Our second objective concerns the analysis of the tail asymptotics of the ruin probabilities
\begin{align*}p_m(u,\beta)&:= {\mathbb P}_m(\bar Y(T_\beta)>u)={\mathbb P}_m(\tau(u)\leqslant T_\beta),\\p_m(u)&:= {\mathbb P}_m(\bar Y(\infty)>u)={\mathbb P}_m(\tau(u)<\infty) \end{align*}
in the regime that $u$ grows large, with ${\mathbb P}_m(\cdot)$ defined in the evident manner,
and with \[\tau(u) :=\inf\{t>0: Y(t)>u\}\]  denoting the {\it exceedance time} of level $u$ (which can be defective). 
Here we restrict ourselves to the setting that the spectrally-positive L\'evy processes $Z_n(\cdot)$ correspond to deterministic drifts, i.e., $Y(\cdot)\in {\mathscr L} [m, {\bs \lambda}, {\bs {\mathscr B}}(\cdot),{\bs r}]$. 
It is remarked, though, that one could in principle also consider the broader class $Y(\cdot)\in {\mathscr L} [m, {\bs \lambda}, {\bs {\mathscr B}}(\cdot),{\bs \varphi}(\cdot)]$, but that would require distinguishing a large number of cases; the precise shapes of each of the Laplace exponents $\varphi_0(\cdot),\ldots, \varphi_m(\cdot)$ and the LST $\bs{\mathscr B}(\cdot)$ all have impact on the behavior of $p_m(u,\beta)$ and $p_m(u)$ for $u$ large.

In our analysis of the tail asymptotics we specifically consider two important classes of claim-size distributions: phase-type claims and regularly-varying claims. Phase-type distributions owe their relevance to the fact that they can approximate any distribution on the positive half-line arbitrarily closely \cite[Thm.\ III.4.2]{ASM2}. Regularly-varying distributions provide a natural framework for modeling heavy-tailed phenomena~\cite{BGT}.

\begin{remark}\label{remark: applic}\em     
    In terms of the application we have in mind, it may feel rather unnatural that we work with claim arrival rates and claim-size distributions that depend on  the number of major claims that were submitted so far (where it is noted that this does {\it not} mean that each major client is allowed to have their own claim-size distribution). In practice, the most relevance instance of our model is probably the one in which $\lambda^\circ_n=\lambda n$ and $B_1,\ldots,B_m$ being distributed as the {\it same} random variable $B$; this is the situation of $m$ independent and statistically identical claim sizes, each of them arriving after an exponentially distributed time with mean $\lambda^{-1}.$
    The proof of our main theorem on $\pi_n(\alpha,\beta)$, Theorem~\ref{T2}, however reveals that working with the more general version has a significant added value. 
    
    As is clear from our model description, we also allow the claim process corresponding to the small clients to depend on the number of major claims submitted so far; the motivation of including this feature is that large claims being submitted may affect the behavior of the small clients. 

    To visually illustrate our model, we have plotted  in Figure \ref{fig: plot single path} an instance of a single path from the model described in this remark. Here, the general arrival rate is $\lambda=1$, the L\'evý processes $Z_n(\cdot)$ are Brownian motions with positive drifts $r_n=n$ and constant variance parameters $\sigma_n^2 = 1$, and the claims are identically exponentially distributed with parameter $\mu = \frac{1}{10}$. \hfill$\Diamond$
\end{remark}

\begin{figure}[ht]
    \centering
    \includegraphics{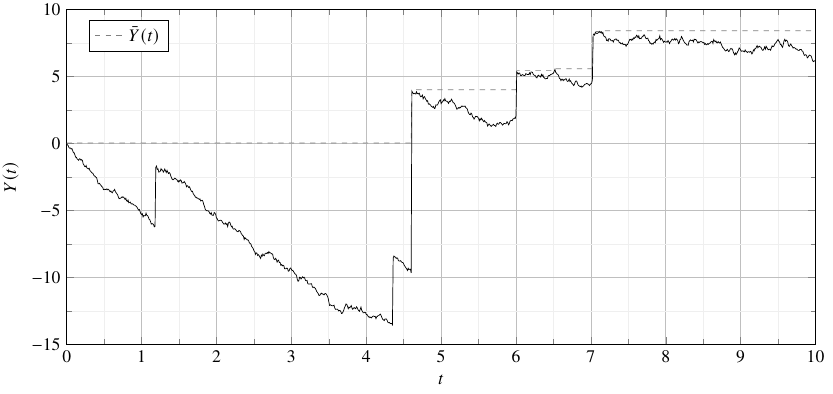}
    \caption{\label{fig: plot single path}An example of an instance of a single path in the model with Brownian motions with positive drifts, for $m=5$.}
\end{figure}

\section*{III. Laplace-Stieltjes transform of running maximum}
The main contribution of this section concerns a recursive procedure via which one can determine the LST $\pi_n(\alpha,\beta)$, with $n\in\{0,\ldots,m\}$, for the net cumulative claim processes $Y(\cdot)\in {\mathscr L} [m, {\bs \lambda}, {\bs {\mathscr B}}(\cdot),{\bs \varphi}(\cdot)]$. 
We prove this result by first establishing,  in Section III.1, a result for the LST of a relatively elementary random variable $Y^\circ$. It turns out that $\bar Y(T_\beta)$ can be written in terms of this random variable $Y^\circ$ for $Y(\cdot)\in {\mathscr L} [m, {\bs \lambda}, {\bs {\mathscr B}}(\cdot),{\bs r}]$, as detailed in Section III.2.
Remarkably, by another specific choice of the model primitives, in combination with the use of the Wiener-Hopf decomposition \cite[Chapter VI]{KYP}, also the more general class $Y(\cdot)\in {\mathscr L}[m, {\bs \lambda}, {\bs {\mathscr B}}(\cdot),{\bs \varphi}(\cdot)]$ can be treated by using the result for $Y^\circ$, as pointed out in Section~III.3. We recall that in Sections~IV and V, where we study the tail asymptotics of $p_m(u,\beta)$ and $p_m(u)$ in the regime that $u$ grows large, we exclusively work with the more narrow class $Y(\cdot)\in {\mathscr L} [m, {\bs \lambda}, {\bs {\mathscr B}}(\cdot),{\bs r}]$.

\subsubsection*{III.1.~~Generic result} In this subsection we define the generic random variable $Y^\circ$ which will, later in this section, facilitate the analysis of $\pi_n(\alpha,\beta)$. This $Y^\circ$ is defined as the maximum of a random walk with a stochastic number of terms, in which each term is the difference between a generally distributed quantity and an exponentially distributed quantity. 

We proceed by providing a formal definition of $Y^\circ.$ To this end, we consider a sequence of independent random variables $C_1,\ldots,C_m$, a probability vector ${\bs p}\equiv (p_0,\ldots,p_m)\in{\mathbb R}^{m+1}$  (i.e., a vector with entries being non-negative and adding up to one), and  a component-wise positive vector ${\bs \nu}\equiv (\nu_1,\ldots,\nu_m)\in{\mathbb R}^m$. Denote by ${\mathscr C}_n(\cdot)$  the LST of $C_n$, and 
$ {\bs{\mathscr C}}(\cdot)\equiv({\mathscr C}_1(\cdot),\ldots,  {\mathscr C}_m(\cdot))$.
Let $T_{\nu_1},\ldots, T_{\nu_m}$ be independent exponentially distributed random variables with means $\nu_1^{-1},\ldots,\nu_m^{-1}$, respectively, independent of $  {\bs C}\equiv (C_1,\ldots,C_m).$ Let the random variable $M$, attaining values in $\{0,1,\ldots,m\}$, be sampled independently of everything else, with ${\mathbb P}(M=j)=p_j$.
Then define
\[Y^\circ \equiv Y^\circ(m, {\bs \nu},  {\bs{\mathscr C}}(\cdot), {\bs p}) = \max_{j=0,\ldots,M} \sum_{i=1}^j \big(C_i - T_{\nu_i}\big)\, \]
where we follow the convention that an empty sum is defined as zero. 
The main goal of this subsection is to devise a procedure to compute the LST of $Y^\circ$:
\[\pi^\circ(\alpha) := {\mathbb E}\,e^{-\alpha Y^\circ}.\] We do so by first analyzing the Laplace transform pertaining to the complementary distribution function ${\mathbb P}(Y^\circ>u)$, defined as 
\[\varrho^\circ(\alpha) :=\int_0^\infty e^{-\alpha u}\,{\mathbb P}(Y^\circ>u)\,{\rm d}u. \]
In our derivations, a number of auxiliary objects play a key role. We define, with $q_{n}:= 1-p_0-\cdots-p_{n-1}$ (where $q_0:=1$), for $n\in\{1,\ldots,m\}$,
\begin{align*}
    {\bs \nu}^{(n)}&:= (\nu_{m-n+1},\ldots,\nu_m);\\
   {\bs {\mathscr C}}^{(n)}(\cdot)&:= ({\mathscr C}_{m-n+1}(\cdot),\ldots,{\mathscr C}_m(\cdot));\\
    {\bs p}^{(n)}&:= \left(\frac{p_{m-n}}{q_{m-n}},\ldots, \frac{p_m}{q_{m-n}}\right);
\end{align*}
these are the counterparts of the objects ${\bs \nu}$, ${\bs{\mathscr C}}(\cdot)$, and ${\bs p}$ when $n$ (rather than $m$) clients have not yet submitted their claim; in this context, notice that the first entry of ${\bs p}^{(n)}$ equals
\[\frac{p_{m-n}}{q_{m-n}}={\mathbb P}(M=m-n\,|\,M\geqslant m-n).\]
Observe that ${\bs \nu}^{(n)}$ and ${\bs {\mathscr C}}^{(n)}(\cdot)$ are $n$-dimensional, and that ${\bs p}^{(n)}$ is $(n+1)$-dimensional, making the random variable $Y^\circ_n:=Y^\circ(n,{\bs \nu}^{(n)}, {\bs {\mathscr C}}^{(n)}(\cdot), {\bs p}^{(n)})$ properly defined. 
Then define its LST via
\[\pi^\circ_n (\alpha):={\mathbb E}\,e^{-\alpha Y^\circ_n},\]
so that $\pi^\circ (\alpha)=\pi^\circ_m (\alpha)$, and analogously
\[\varrho_n^\circ(\alpha) :=\int_0^\infty e^{-\alpha u}\,{\mathbb P}(Y^\circ_n>u)\,{\rm d}u,\] 
so that $\varrho^\circ (\alpha)=\varrho^\circ_m (\alpha)$. Our objective is to express $\pi^\circ_n (\alpha)$ in terms of $\pi^\circ_{n-1} (\alpha)$, which we do by expressing $\varrho^\circ_n (\alpha)$ in terms of $\varrho^\circ_{n-1} (\alpha)$. In this context, the following (well-known) result is particularly useful; see e.g.\ \cite[Eqn.\ (1.1)]{MB}.
\begin{lemma}\label{lemma: relation pi_n rho_n}
    For any $\alpha\geqslant0$ and for any non-negative random variable $Y$, 
    \begin{equation}\label{eq: relation pi_n rho_n}
        {\mathbb E} \,e^{-\alpha Y}= 1 - \alpha\int_0^\infty e^{-\alpha u} \,{\mathbb P}(Y > u)\,{\rm d} u.
    \end{equation}
\end{lemma}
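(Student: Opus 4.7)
The identity is a standard Fubini/tail-formula manipulation; the cleanest route is to rewrite the right-hand side by exchanging expectation and integration. The plan is to treat the degenerate case $\alpha=0$ separately and handle $\alpha>0$ via Tonelli's theorem applied to the non-negative integrand $\alpha\, e^{-\alpha u}\mathbf{1}_{\{Y>u\}}$.

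For $\alpha=0$ the right-hand side of \eqref{eq: relation pi_n rho_n} is $1$ (the prefactor $\alpha$ kills the integral, which in any case could be infinite) and the left-hand side is $\mathbb{E}\,e^{0}=1$, so the identity holds. For $\alpha>0$, I would express the tail probability as an expectation of an indicator and write
\[
\alpha\int_0^\infty e^{-\alpha u}\,\mathbb{P}(Y>u)\,{\rm d}u
= \alpha\int_0^\infty e^{-\alpha u}\,\mathbb{E}\bigl[\mathbf{1}_{\{u<Y\}}\bigr]\,{\rm d}u.
\]
Since the integrand $\alpha\, e^{-\alpha u}\mathbf{1}_{\{u<Y\}}$ is non-negative and jointly measurable on $[0,\infty)\times\Omega$, Tonelli's theorem lets me interchange the expectation and the $u$-integral.

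After the interchange, the inner integral becomes $\int_0^Y \alpha\, e^{-\alpha u}\,{\rm d}u = 1-e^{-\alpha Y}$ (valid also on the event $\{Y=0\}$, where both sides vanish, and on $\{Y=\infty\}$ if such an event has positive mass, since the integral equals $1$ and $e^{-\alpha Y}=0$ there). Taking expectations yields
\[
\alpha\int_0^\infty e^{-\alpha u}\,\mathbb{P}(Y>u)\,{\rm d}u
= \mathbb{E}\bigl[1-e^{-\alpha Y}\bigr]
= 1-\mathbb{E}\,e^{-\alpha Y},
\]
which rearranges to the claimed identity. There is no real obstacle: the only point requiring a (trivial) justification is the applicability of Tonelli, which is immediate because the integrand is non-negative; no integrability assumption on $Y$ is needed, as $Y$ can even be improper in the sense that $\mathbb{P}(Y=\infty)>0$, with the natural conventions $e^{-\alpha\cdot\infty}=0$ for $\alpha>0$.
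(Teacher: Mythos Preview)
Your proof is correct; the Tonelli interchange is the standard and complete argument for this identity. The paper itself does not give a proof of this lemma at all---it simply states it as well-known and cites \cite[Eqn.\ (1.1)]{MB}---so there is nothing to compare against.
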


The idea is that we evaluate $\varrho^\circ_n (\alpha)$ by conditioning on whether or not $M$ is equal to $0$. By decomposing into the events that the level $u$ is reached due to $C_1$ or not, we have, 
    \begin{align}\label{eq: decomp rho_nM}
        \varrho^\circ_m(\alpha) =\:& (1-p_{0}) \int_0^\infty e^{-\alpha u} \int_0^\infty \nu_1 e^{-\nu_1 t} \,\mathbb{P}\left(C_1 > u+t\right) \,\mathrm{d}t\,\mathrm{d}u \:+\\
        \notag& (1-p_{0}) \int_0^\infty e^{-\alpha u} \int_0^\infty \nu_1 e^{-\nu_1 t} \int_0^{u+t}\mathbb{P}(C_1\in \mathrm{d}s)\, \mathbb{P}\left(Y^\circ_{m-1}> u+t -  s\right)\,\mathrm{d}t\,\mathrm{d}u.
    \end{align}
    The first term on the right hand side of \eqref{eq: decomp rho_nM} can be calculated directly. Rewriting the probability within the integral gives, changing the order of integration and substituting $w = u+t$,
    \begin{align*}
        (1-p_0) &\int_0^\infty e^{-\alpha u} \int_0^\infty \nu_1 e^{-\nu_1 t} \,\left(1-\mathbb{P}\left(C_1 < u+t\right)\right) \,\mathrm{d}t\,\mathrm{d}u \\
        &=(1-p_0)\left(\frac{1}{\alpha} + \frac{\nu_1}{\alpha-\nu_1}\left(\frac{{\mathscr C}_1(\nu_1)}{\nu_1}-\frac{{\mathscr C}_1(\alpha)}{\alpha}\right)\right)\\
        &=\left(1-p^{(m)}_0\right)\left(\frac{1}{\alpha} + \frac{\nu^{(m)}_1}{\alpha-\nu_1^{(m)}}\left(\frac{{\mathscr C}^{(m)}_1(\nu_1^{(m)})}{\nu_1^{(m)}}-\frac{{\mathscr C}_1^{(m)}(\alpha)}{\alpha}\right)\right).
    \end{align*}
    For the second term on the right side of \eqref{eq: decomp rho_nM}, once more interchanging the order of integration and substituting $w = u+t$, we obtain along similar lines that
    \begin{equation}\label{eq: 2nd rhs rho_n}
        \left(1-p^{(m)}_0\right) \frac{\nu_1^{(m)}}{\nu_1^{(m)} - \alpha}\left( {\mathscr C}_1^{(m)}(\alpha){\varrho}^\circ_{m-1}(\alpha) -  {\mathscr C}_1^{(m)}(\nu_1^{(m)}){\varrho}^\circ_{m-1}(\nu_1^{(m)})\right).
    \end{equation}
The above reasoning shows how to express $\varrho^\circ_m(\alpha)$ in terms of $\varrho^\circ_{m-1}(\alpha)$. Proceeding along in the same manner provides us with the following lemma. 
\begin{lemma}
    For any $\alpha\geqslant 0$ and $n\in\{1,\ldots,m\}$, 
    \begin{align*}
        \varrho^\circ_n(\alpha)=\:& \left(1-p^{(n)}_0\right)\left(\frac{1}{\alpha} + \frac{\nu^{(n)}_1}{\alpha-\nu_1^{(n)}}\left(\frac{{\mathscr C}^{(n)}_1(\nu_1^{(n)})}{\nu_1^{(n)}}-\frac{{\mathscr C}_1^{(n)}(\alpha)}{\alpha}\right)\right)\:+\\
        \:& 
        \left(1-p^{(n)}_0\right) \frac{\nu_1^{(n)}}{\nu_1^{(n)}-\alpha}\left( {\mathscr C}_1^{(n)}(\alpha){\varrho}^\circ_{n-1}(\alpha) -  {\mathscr C}_1^{(n)}(\nu_1^{(n)}){\varrho}^\circ_{n-1}(\nu_1^{(n)})\right),
    \end{align*}
    with ${\varrho}^\circ_0(\alpha) = 0$.
\end{lemma}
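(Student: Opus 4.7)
\emph{Proof plan.} The argument repeats, at the reindexed level $n$, the decomposition already carried out for $n=m$ in \eqref{eq: decomp rho_nM}. Let $M_n$ denote the $\{0,1,\ldots,n\}$-valued truncation variable with law ${\bs p}^{(n)}$ driving $Y^\circ_n$, and condition on whether $M_n=0$ or $M_n\geqslant 1$. On $\{M_n=0\}$ the empty sum gives $Y^\circ_n=0$, contributing nothing to $\mathbb{P}(Y^\circ_n>u)$ for $u>0$. On $\{M_n\geqslant 1\}$, extract the first summand to write
\[
Y^\circ_n = \max\!\Bigl(0,\;(C^{(n)}_1 - T_{\nu^{(n)}_1}) + R_n\Bigr),\qquad R_n := \max_{j=0,\ldots,M_n-1}\sum_{i=1}^j\bigl(C^{(n)}_{i+1}-T_{\nu^{(n)}_{i+1}}\bigr),
\]
with $R_n$ independent of $(C^{(n)}_1,T_{\nu^{(n)}_1})$ by construction.

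The crux is the distributional identification $R_n\stackrel{d}{=}Y^\circ_{n-1}$ under the conditional law. Directly from the definitions, $\nu^{(n)}_{i+1}=\nu^{(n-1)}_i$ and ${\mathscr C}^{(n)}_{i+1}(\cdot)={\mathscr C}^{(n-1)}_i(\cdot)$ for $i=1,\ldots,n-1$, while
\[
\mathbb{P}(M_n-1=k\,|\,M_n\geqslant 1) = \frac{p_{m-n+1+k}}{q_{m-n+1}} = p^{(n-1)}_k,\qquad k=0,\ldots,n-1,
\]
which matches the law of the truncation variable for $Y^\circ_{n-1}$. Consequently, for $u>0$,
\[
\mathbb{P}(Y^\circ_n>u) = \bigl(1-p^{(n)}_0\bigr)\,\mathbb{P}\bigl((C^{(n)}_1-T_{\nu^{(n)}_1})+Y^\circ_{n-1}>u\bigr).
\]

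Splitting the event on the right-hand side according to whether $C^{(n)}_1>u+T_{\nu^{(n)}_1}$ (in which case level $u$ is exceeded already by the first upward jump, using $Y^\circ_{n-1}\geqslant 0$) or its complement yields the direct analogue of \eqref{eq: decomp rho_nM} with $(p_0,\nu_1,{\mathscr C}_1,\varrho^\circ_{m-1})$ replaced by $(p^{(n)}_0,\nu^{(n)}_1,{\mathscr C}^{(n)}_1,\varrho^\circ_{n-1})$. Multiplying by $e^{-\alpha u}$, interchanging orders of integration by Fubini, and substituting $w=u+t$ exactly as in the paragraph following \eqref{eq: decomp rho_nM} then produces the two terms claimed in the lemma. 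The base case $\varrho^\circ_0(\alpha)=0$ is immediate: since $q_m=p_m$, the vector ${\bs p}^{(0)}$ reduces to $(1)$, forcing $M_0\equiv 0$ and hence $Y^\circ_0\equiv 0$. The only non-routine step throughout is the bookkeeping underlying $R_n\stackrel{d}{=}Y^\circ_{n-1}$; once this identification is in place, the remainder is a transcription of the computation already performed for $n=m$.
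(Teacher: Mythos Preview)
Your proposal is correct and follows essentially the same approach as the paper. The paper's own ``proof'' is nothing more than the remark that the computation done for $n=m$ carries over verbatim; you have simply made explicit the one piece of bookkeeping the paper suppresses, namely the distributional identification $R_n\stackrel{\rm d}{=}Y^\circ_{n-1}$ via the index shifts $\nu^{(n)}_{i+1}=\nu^{(n-1)}_i$, ${\mathscr C}^{(n)}_{i+1}={\mathscr C}^{(n-1)}_i$, and $\mathbb{P}(M_n-1=k\,|\,M_n\geqslant 1)=p^{(n-1)}_k$.
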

We proceed by providing the analogous recursion for $\pi^\circ_n(\alpha)$. Rewriting the term $\varrho_n^\circ(\alpha)$ in terms of $\pi_n^\circ(\alpha)$ using the relation in Lemma \ref{lemma: relation pi_n rho_n} yields the following proposition.

\begin{proposition}\label{P1}
    For any $\alpha\geqslant0$ and $n\in\{1,\dots,m\}$,
    \begin{equation}\label{P1_eq}
        \pi_n^\circ(\alpha) = p_0^{(n)} + \left(1-p_0^{(n)}\right) \frac{\nu_1^{(n)}}{\nu_1^{(n)}-\alpha} \left({\mathscr C}_1^{(n)}(\alpha) \pi_{n-1}^\circ(\alpha) - \frac{\alpha}{\nu_1^{(n)}}{\mathscr C}_1^{(n)}(\nu_1^{(n)})\pi_{n-1}^\circ(\nu_1^{(n)}) \right),
    \end{equation}
    with $\pi_0^\circ(\alpha) = 1$. 
\end{proposition}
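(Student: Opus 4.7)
The strategy is essentially a one-step substitution: use Lemma~\ref{lemma: relation pi_n rho_n} to pass from the recursion for $\varrho^\circ_n(\alpha)$ (given in the preceding lemma) to the corresponding recursion for $\pi^\circ_n(\alpha)$. Concretely, the plan is the following.

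First, I would apply Lemma~\ref{lemma: relation pi_n rho_n} at two `scales': on one hand, $\pi_n^\circ(\alpha) = 1 - \alpha\,\varrho^\circ_n(\alpha)$, and on the other, $\varrho^\circ_{n-1}(\gamma) = (1-\pi^\circ_{n-1}(\gamma))/\gamma$ evaluated at $\gamma=\alpha$ and at $\gamma=\nu_1^{(n)}$. Substituting the latter two identities into the second term of the recursion for $\varrho^\circ_n(\alpha)$ turns the expression $\mathscr{C}_1^{(n)}(\alpha)\varrho^\circ_{n-1}(\alpha) - \mathscr{C}_1^{(n)}(\nu_1^{(n)})\varrho^\circ_{n-1}(\nu_1^{(n)})$ into a linear combination of $1$, $\pi^\circ_{n-1}(\alpha)$ and $\pi^\circ_{n-1}(\nu_1^{(n)})$, with explicit coefficients in $\alpha$ and $\nu_1^{(n)}$. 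Multiplying everything by $-\alpha$ and adding $1$ then produces $\pi_n^\circ(\alpha)$ in closed form.

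The key algebraic observation is a cancellation that makes the final formula clean: after expansion, the terms in $\alpha R_1$ (the `deterministic' part coming from the first summand of the recursion for $\varrho^\circ_n$) exactly annihilate the $\pi^\circ_{n-1}$-independent terms produced by $\alpha R_2$ via $\varrho^\circ_{n-1}(\gamma)=(1-\pi^\circ_{n-1}(\gamma))/\gamma$. In particular, the constant contributions $\nu_1^{(n)}\mathscr{C}_1^{(n)}(\alpha)/(\nu_1^{(n)}-\alpha)$ and $\alpha\mathscr{C}_1^{(n)}(\nu_1^{(n)})/(\nu_1^{(n)}-\alpha)$ that arise from the two sources cancel pairwise, leaving only
\[
1 - \alpha\,\varrho^\circ_n(\alpha) = p_0^{(n)} + (1-p_0^{(n)})\left(\frac{\nu_1^{(n)}\mathscr{C}_1^{(n)}(\alpha)}{\nu_1^{(n)}-\alpha}\pi^\circ_{n-1}(\alpha) - \frac{\alpha\,\mathscr{C}_1^{(n)}(\nu_1^{(n)})}{\nu_1^{(n)}-\alpha}\pi^\circ_{n-1}(\nu_1^{(n)})\right),
\]
which is the claimed identity \eqref{P1_eq} after factoring out $\nu_1^{(n)}/(\nu_1^{(n)}-\alpha)$. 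Finally, the boundary condition $\pi^\circ_0(\alpha)=1$ follows from $\varrho^\circ_0(\alpha)=0$ (equivalently, from the convention that the empty maximum equals zero, i.e.\ $Y^\circ_0\equiv 0$).

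The proof is essentially book-keeping, so the only real obstacle is tracking signs in the factor $1/(\nu_1^{(n)}-\alpha)$ versus $1/(\alpha-\nu_1^{(n)})$ across the two summands and ensuring the cancellation is carried out consistently. A minor side remark is that both sides of \eqref{P1_eq} are, by construction, analytic in $\alpha\ge 0$: the apparent simple pole at $\alpha=\nu_1^{(n)}$ is removable, since the numerator $\nu_1^{(n)}\mathscr{C}_1^{(n)}(\alpha)\pi_{n-1}^\circ(\alpha) - \alpha\,\mathscr{C}_1^{(n)}(\nu_1^{(n)})\pi_{n-1}^\circ(\nu_1^{(n)})$ vanishes at $\alpha=\nu_1^{(n)}$; this is a useful sanity check on the cancellation above.
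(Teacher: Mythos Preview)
Your proposal is correct and follows exactly the route the paper takes: the paper's proof is a single sentence stating that one rewrites $\varrho_n^\circ(\alpha)$ in terms of $\pi_n^\circ(\alpha)$ via Lemma~\ref{lemma: relation pi_n rho_n}, and you have simply spelled out the algebra (the pairwise cancellation of the $\pi^\circ_{n-1}$-independent terms) that the paper leaves implicit.
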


The recursion of Proposition \ref{P1} can be solved in a straightforward manner, as we demonstrate now; in particular there are no serious complications related to determining the unknown constants appearing in \eqref{P1_eq}. 
For $n=1$ we obtain the explicit expression 
\[
\pi_1^\circ(\alpha) = p_0^{(1)} + \left(1-p_0^{(1)}\right) {\mathscr D}(\alpha),\:\:\mbox{with}\:\:\:{\mathscr D}(\alpha):=\frac{\nu_1^{(1)}}{\nu_1^{(1)}-\alpha} \left({\mathscr C}_1^{(1)}(\alpha)  - \frac{\alpha}{\nu_1^{(1)}}{\mathscr C}_1^{(1)}(\nu_1^{(1)}) \right).
\]
Iterating once more,
\begin{align*}
    \pi_2^\circ(\alpha) = p_0^{(2)} + \left(1-p_0^{(2)}\right) \frac{\nu_1^{(2)}}{\nu_1^{(2)}-\alpha}\Bigg(&  {\mathscr C}_1^{(2)}(\alpha)\left(  p_0^{(1)} + \left(1-p_0^{(1)}\right) {\mathscr D}(\alpha)\right)\:- \\
    &\frac{\alpha}{\nu_1^{(2)}}{\mathscr C}_2^{(2)}(\nu_1^{(2)})
    \left(  p_0^{(1)} + \left(1-p_0^{(1)}\right) {\mathscr D}(\nu_1^{(2)})\right) \Bigg).
\end{align*}
If $\nu_1^{(1)}\not= \nu_1^{(2)}$, then we evidently have
\[{\mathscr D}(\nu_1^{(2)}):=\frac{\nu_1^{(1)}}{\nu_1^{(1)}-\nu_1^{(2)}} \left({\mathscr C}_1^{(1)}(\nu_1^{(2)})  - \frac{\nu_1^{(2)}}{\nu_1^{(1)}}{\mathscr C}_1^{(1)}(\nu_1^{(1)}) \right)=\frac{\nu_1^{(1)} {\mathscr C}_1^{(1)}(\nu_1^{(2)}) - \nu_1^{(2)} {\mathscr C}_1^{(1)}(\nu_1^{(1)})}{\nu_1^{(1)}-\nu_1^{(2)}},\]
whereas in the case that $\nu_1^{(1)}= \nu_1^{(2)}$ this quantity can be computed by an elementary application of L'H\^opital's rule. Proceeding along these lines, all LST\,s $\pi_0^\circ(\alpha),\ldots,\pi_m^\circ(\alpha)$ can be identified.

\begin{corollary}
    For any $n\in\{1,\dots,m\}$,
    \[ {\mathbb P}({Y}_n^\circ=0)= \pi_n^\circ(\infty) = p_0^{(n)} + \left(1-p_0^{(n)}\right) \left({\mathscr C}_1^{(n)}(\nu_1^{(n)})\pi_{n-1}^\circ(\nu_1^{(n)}) \right).\]
\end{corollary}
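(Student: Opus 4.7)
The plan is to establish the corollary by combining two standard observations: a probabilistic identification of $\pi_n^\circ(\infty)$ with the point mass at $0$, and a direct passage to the limit $\alpha \to \infty$ in the recursion \eqref{P1_eq}.

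First, I would argue that $\pi_n^\circ(\infty) = {\mathbb P}(Y_n^\circ = 0)$. By construction $Y_n^\circ$ is non-negative (the supremum is over a set containing the empty sum, which equals zero). Hence for every $\alpha > 0$ the integrand in $e^{-\alpha Y_n^\circ}$ is bounded above by $1$ and tends pointwise to $\mathbf 1_{\{Y_n^\circ = 0\}}$ as $\alpha \to \infty$. Dominated convergence then yields $\lim_{\alpha\to\infty}\pi_n^\circ(\alpha) = {\mathbb P}(Y_n^\circ = 0)$, justifying the notation $\pi_n^\circ(\infty)$.

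Next, I would send $\alpha \to \infty$ in the recursion
\[
\pi_n^\circ(\alpha) = p_0^{(n)} + \left(1-p_0^{(n)}\right) \frac{\nu_1^{(n)}}{\nu_1^{(n)}-\alpha} \left({\mathscr C}_1^{(n)}(\alpha) \pi_{n-1}^\circ(\alpha) - \frac{\alpha}{\nu_1^{(n)}}{\mathscr C}_1^{(n)}(\nu_1^{(n)})\pi_{n-1}^\circ(\nu_1^{(n)}) \right).
\]
The two terms inside the parentheses are treated separately. Since $\pi_{n-1}^\circ(\alpha)$ and ${\mathscr C}_1^{(n)}(\alpha)$ are bounded by $1$ uniformly in $\alpha$, while $\nu_1^{(n)}/(\nu_1^{(n)}-\alpha) \to 0$, the first term vanishes in the limit. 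For the second term, I use that $\nu_1^{(n)}/(\nu_1^{(n)}-\alpha) \cdot \alpha/\nu_1^{(n)} = \alpha/(\nu_1^{(n)}-\alpha) \to -1$, so that the product converges to $-{\mathscr C}_1^{(n)}(\nu_1^{(n)})\pi_{n-1}^\circ(\nu_1^{(n)})$. Accounting for the leading minus sign inside the bracket, this produces precisely $+{\mathscr C}_1^{(n)}(\nu_1^{(n)})\pi_{n-1}^\circ(\nu_1^{(n)})$, which is then multiplied by $1-p_0^{(n)}$ and added to $p_0^{(n)}$, yielding the formula stated in the corollary.

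There is no real obstacle here — the argument is a routine limit combined with dominated convergence. The only step deserving minor care is keeping track of the factor $\alpha/(\nu_1^{(n)}-\alpha)$, whose limit is $-1$ rather than $0$ and which is responsible for producing the nontrivial surviving term; that $\nu_1^{(n)}$ is real and positive (so no blow-up at $\alpha = \nu_1^{(n)}$ is encountered along the real axis as $\alpha \to \infty$) makes the limit straightforward.
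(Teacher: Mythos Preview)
Your proof is correct and matches the paper's intended derivation: the corollary is stated immediately after Proposition~\ref{P1} with no separate proof, so it is meant to follow by sending $\alpha\to\infty$ in the recursion \eqref{P1_eq}, exactly as you do. The dominated-convergence justification of $\pi_n^\circ(\infty)=\mathbb{P}(Y_n^\circ=0)$ and the tracking of $\alpha/(\nu_1^{(n)}-\alpha)\to -1$ are both handled correctly.
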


\subsubsection*{III.2.~~Small clients corresponding to deterministic drifts}
After having developed the recursive algorithm that produces the LST of $Y^\circ$, we now go back to the model introduced in Section~II. In this subsection we discuss the special instance in which the small clients are represented by deterministic drifts, i.e., $Y(\cdot)\in {\mathscr L} [m, \bs\lambda, \bs{\mathscr B}(\cdot),{\bs r}]$, where we can effectively directly apply Proposition \ref{P1}.

For ease we concentrate on the case that all entries of ${\bs r}$ are strictly positive; we later discuss the case in which this does not hold for some entries. 
We thus have that  $Z_n(t) = -r_n t$ for $r_n>0$, $n\in\{0,\ldots,m\}$, and hence, trivially,
\begin{equation*}
    \varphi_n(\alpha) = \log \mathbb{E}\,e^{-\alpha Z_n(1)} = \log \mathbb{E}\,e^{\alpha r_n} = \alpha r_n.
\end{equation*}
The idea is to represent $\bar Y(T_\beta)$ in terms of the random variable $Y^\circ$, with a specific choice of the parameters underlying $Y^\circ$. To this end we choose, for $n\in\{1,\ldots,m\}$,
\[{\mathscr C}_1^{(n)}(\cdot)={\mathscr B}_{m-n+1}(\cdot),\:\:\:\nu_1^{(n)}=\frac{\lambda_{n}}{r_{n}},\:\:\:p_0^{(n)}= \frac{\beta}{\lambda_{n}}.\]
Application of Proposition \ref{P1} now provides us with a recursion for $\pi_n(\alpha,\beta).$

\begin{theorem}\label{T1}
    Suppose $Y(\cdot)\in {\mathscr L} [m, \lambda, {\bs {\mathscr B}}(\cdot),{\bs r}]$. For any $\alpha\geqslant0$, $\beta\geqslant0$, and $n\in\{1,\dots,m\}$,
    \begin{equation}\label{T1_eq}
        \pi_n(\alpha,\beta) =  \frac{\beta}{\lambda_{n}} + \frac{\lambda_n^\circ}{\lambda_n-\alpha r_n} \left({\mathscr B}_{m-n+1}(\alpha)\, \pi_{n-1}(\alpha,\beta) - \frac{\alpha r_n}{\lambda_n}{\mathscr B}_{m-n+1}\Big(\frac{\lambda_{n}}{r_{n}}\Big)\,\pi_{n-1}\Big(\frac{\lambda_{n}}{r_{n}},\beta\Big) \right),
    \end{equation}
    with $\pi_0(\alpha,\beta) = 1$. 
\end{theorem}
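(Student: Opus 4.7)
The strategy is to recognise $\bar Y(T_\beta)$ under $\mathbb{P}_n$ as a specific instance of the generic random variable $Y^\circ_n$ from Section III.1. Once this distributional identity is in place, the stated recursion is obtained by applying Proposition \ref{P1} with the parameter choices ${\mathscr C}_1^{(n)}(\cdot)={\mathscr B}_{m-n+1}(\cdot)$, $\nu_1^{(n)}=\lambda_n/r_n$ and $p_0^{(n)}=\beta/\lambda_n$ suggested in the text, and by simplifying the prefactors via $(1-p_0^{(n)})\,\nu_1^{(n)}/(\nu_1^{(n)}-\alpha)=\lambda_n^\circ/(\lambda_n-\alpha r_n)$ and $\alpha/\nu_1^{(n)}=\alpha r_n/\lambda_n$.

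The key observation is that, because every $r_n>0$, the path of $Y(\cdot)$ strictly decreases between arrivals, while at each arrival epoch $A_{(k)}$ it jumps upward by a non-negative claim. As a result, the supremum over $[0,T_\beta]$ is attained at one of the post-jump epochs:
\[
\bar Y(T_\beta)\;=\;\max_{k=0,\ldots,K}Y(A_{(k)})\;=\;\max_{k=0,\ldots,K}\sum_{j=1}^{k}\bigl(B_{m-n+j}-r_{n-j+1}\,S_j\bigr),
\]
where $K$ denotes the number of arrivals in $[0,T_\beta]$, $S_j:=A_{(j)}-A_{(j-1)}$, and $Y(A_{(0)}):=0$.

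To connect this with $Y^\circ_n$, I would invoke a race-between-exponentials argument. At each visit to a state with $k$ remaining major clients, the arrival clock (rate $\lambda_k^\circ$) and the horizon clock (rate $\beta$) compete: by the memoryless property the horizon wins with probability $\beta/\lambda_k$, which we identify with $p_0^{(k)}$, while conditionally on the arrival winning, the sojourn time is exponential with rate $\lambda_k$. Consequently, conditional on the arrivals actually occurring, $r_{n-j+1}S_j$ is exponential with rate $\lambda_{n-j+1}/r_{n-j+1}$, and this matches $T_{\nu_j^{(n)}}$ upon setting $\nu_j^{(n)}=\lambda_{n-j+1}/r_{n-j+1}$ (in particular $\nu_1^{(n)}=\lambda_n/r_n$). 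Taking $C_j^{(n)}:=B_{m-n+j}$, so that ${\mathscr C}_1^{(n)}={\mathscr B}_{m-n+1}$, then produces the distributional identity $\bar Y(T_\beta)\stackrel{d}{=}Y^\circ_n$ under $\mathbb{P}_n$, and Theorem \ref{T1} follows from Proposition \ref{P1}.

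The main obstacle is not conceptual but bookkeeping: the claim indices in the full model are ordered by overall arrival, whereas the indices of $Y^\circ_n$ run over the $n$ remaining steps, so the parameter matching has to be carried out with care regarding the shift $i\leftrightarrow m-n+i$. A genuine assumption underlying the argument is the positivity of every $r_n$, which guarantees that the supremum is reached at a post-jump time -- so in particular it is not attained in the trailing segment $[A_{(K)},T_\beta]$ after the final arrival. This is precisely why the text signals ahead of the theorem that the mixed-sign case must be treated separately, ultimately via the Wiener--Hopf-based route of Section III.3.
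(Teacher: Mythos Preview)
Your proposal is correct and follows essentially the same route as the paper: represent $\bar Y(T_\beta)$ under $\mathbb{P}_n$ as an instance of the generic variable $Y^\circ_n$ and then read off the recursion from Proposition~\ref{P1} with the parameter choices ${\mathscr C}_1^{(n)}={\mathscr B}_{m-n+1}$, $\nu_1^{(n)}=\lambda_n/r_n$, $p_0^{(n)}=\beta/\lambda_n$. In fact you supply more justification than the paper does --- the paper simply states the parameter identifications and invokes Proposition~\ref{P1}, whereas you spell out the race-between-exponentials argument and the fact that, with all $r_n>0$, the supremum is attained at a post-jump epoch.
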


The LST $\pi_m(\alpha, \beta)$ in principle uniquely characterizes the probability density of the running maximum as functions of the initial reserve level $u$ and the time $t$. However, the transforms derived from Theorem \ref{T1} often lead to complex expressions involving multiple variables, making symbolic double inversion a challenging task. Therefore, it is generally more practical to employ {\it numerical} Laplace inversion techniques.

We proceed by discussing how Theorem \ref{T1} can be used to evaluate various performance metrics pertaining to the running maximum process $\bar Y(\cdot)$. In our experiment, we use the parameters of the model that was described in Remark \ref{remark: applic}. We take an arrival rate $\lambda = \frac{1}{4}$ for each individual major claim, so that $\lambda^\circ_n = \lambda n = \frac{n}{4}$. The claim sizes are independent and exponentially distributed with parameter $\mu = \frac{1}{4}$, and for the drifts we choose $r_n = n$. Taking the first and second derivatives of $\pi_m(\alpha,\beta)$ with respect to $\alpha$, and letting $\alpha=0$, we readily obtain  recursions from which the mean $\mathbb{E}_m\,\bar{Y}(T_\beta)$ and variance ${\rm Var}_m\,\bar{Y}(T_\beta)$ can be evaluated; recall that, after dividing by $\beta$, these are to be interpreted as (single) Laplace transforms. By applying Stehfest's numerical inversion algorithm (see \cite{STEH}, or, for the specific algorithm used here, \cite[Proposition 8.2]{AW2}), we obtain the mean $\mathbb{E}_m\,\bar{Y}(\cdot)$ and variance ${\rm Var}_m\,\bar{Y}(\cdot)$ of the running maximum as functions of time. Figure \ref{fig: base model} presents the results of the numerical inversion for different values of $m$.

\begin{figure}[ht]
  \centering
\resizebox{10cm}{4.5cm}{\includegraphics{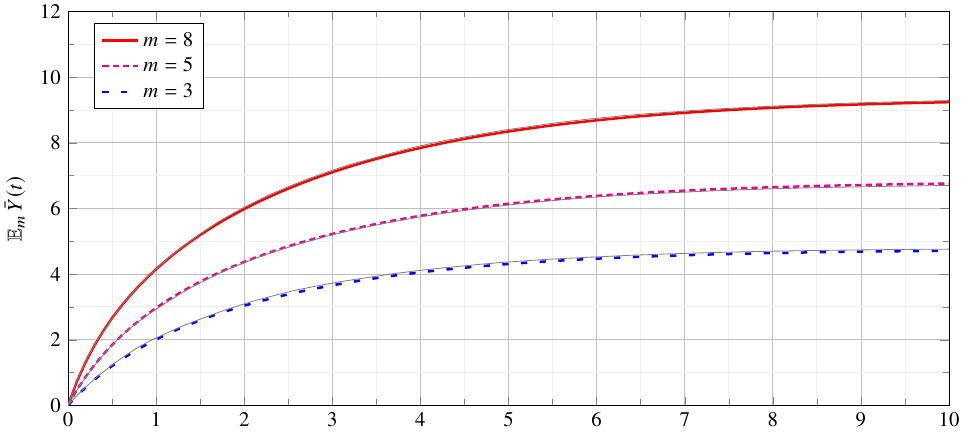}}

\resizebox{10cm}{4.5cm}{\includegraphics{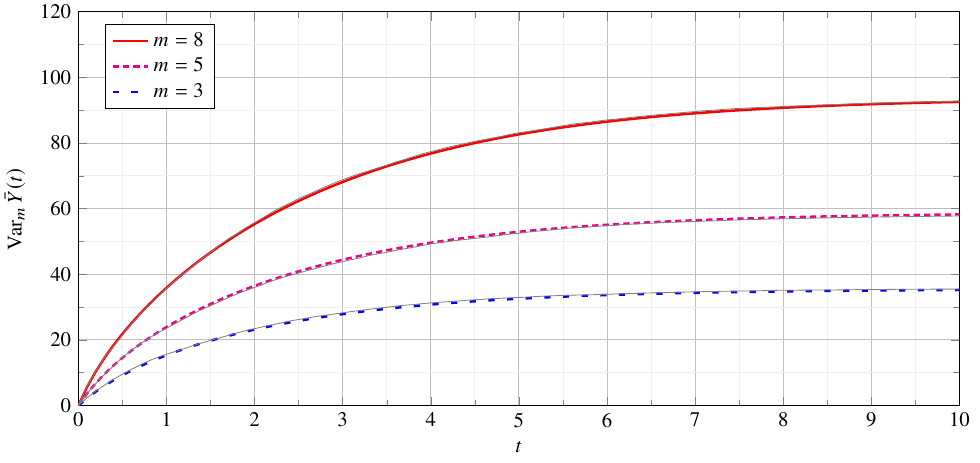}}
    \caption{\label{fig: base model}Mean (top) and variance (bottom) of the running maximum, as functions of time, in the model with only positive drifts, for different values of $m$. The gray lines denote Monte Carlo based estimated values and are plotted for comparison.}
\end{figure}

\subsubsection*{III.3.~~Small clients corresponding to spectrally-positive L\'evy processes}
We expand on the model in the previous subsection by going back to the complete model introduced in Section II, for which the small clients are modeled by general spectrally positive Lévy processes. We let the Lévy process $Z_n(t)$ be characterized by its Laplace exponent $\varphi_n(\cdot)$ and its inverse $\psi_n(\cdot)$ as stated in Section II, so that $Y(\cdot)\in {\mathscr L} [m, \bs\lambda, \bs{\mathscr B}(\cdot),{\bs \phi}(\cdot)]$. 
We let the random variable $M$ denote the number of large claims before the killing time $T_\beta$, which is characterized, for $n=0,\dots,m$, by 
\begin{align*}
    {\mathbb P}(M=n) = \left(\prod_{i=1}^{n}\frac{\lambda^\circ_{m-i+1}}{\lambda_{m-i+1}}\right)\frac{\beta}{\lambda_{m-n}}=\left(\prod_{i=m-n+1}^{m}\frac{\lambda^\circ_{i}}{\lambda_{i}}\right)\frac{\beta}{\lambda_{m-n}},
\end{align*}
with the empty product being defined as one and $\lambda^\circ_0 := 0$.
\begin{lemma}\label{L3}
    Suppose $Y(\cdot)\in {\mathscr L} [m, \bs\lambda, \bs{\mathscr B}(\cdot),{\bs \phi}(\cdot)]$. For any $\beta>0$
    \begin{equation}\label{eq: decomp} \bar Y(T_\beta)\stackrel{{\rm d}}{=} \bar Z_m(T_{\lambda_m}) + \max_{j=0,\ldots,M} \sum_{i=1}^j\big(B_i+\bar Z_{m-i}(T_{\lambda_{m-i}}) - T_{\psi_{m-i+1}(\lambda_{m-i+1})}\big),\end{equation}
with all random variables appearing in the right hand side being independent. \end{lemma}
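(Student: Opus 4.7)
The plan is to derive the stated decomposition via a pathwise analysis on successive ``epochs'' (periods between consecutive events that are either a major claim or the killing), followed by a Wiener-Hopf substitution applied within each epoch.

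First, I would partition the interval $[0,T_\beta]$ into epochs: at the start of the $i$-th epoch exactly $m-i+1$ major clients are still alive, so this epoch ends at the minimum of two independent exponential clocks with rates $\lambda^\circ_{m-i+1}$ and $\beta$. Its duration $D_i$ is therefore exponential with rate $\lambda_{m-i+1}$. By memorylessness, the $D_i$ are mutually independent and each is independent of the ``winner'' at its boundary (claim versus killing). Consequently $M$, being the index of the first epoch terminated by killing, depends only on the sequence of winners and is independent of $(D_1,D_2,\ldots)$; the formula for $\mathbb P(M=n)$ stated just before the lemma also follows immediately.

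Next, during the $i$-th epoch the process $Y$ behaves as an independent copy $Z^{(i)}_{m-i+1}$ (by the strong Markov property applied at each claim time), and jumps by $B_i$ at the epoch's end whenever that epoch is ended by a claim. Setting $S_0:=0$ and $S_j:=\sum_{i=1}^{j}\bigl(Z^{(i)}_{m-i+1}(D_i)+B_i\bigr)$ for $j\ge 1$, the running maximum can be written, conditionally on $\{M=k\}$, as
\[
\bar Y(T_\beta)=\max_{j=0,\ldots,k}\Bigl(S_j+\bar Z^{(j+1)}_{m-j}(D_{j+1})\Bigr),
\]
since the global supremum equals the maximum over each epoch of ``position at start of epoch plus supremum during that epoch''. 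The key analytical ingredient is the Wiener-Hopf factorization for spectrally-positive L\'evy processes: for any independent $T_q\sim\mathrm{Exp}(q)$ the pair $\bigl(\bar Z_n(T_q),\,\bar Z_n(T_q)-Z_n(T_q)\bigr)$ consists of independent components, the second being $\mathrm{Exp}(\psi_n(q))$-distributed (see \cite[Ch.~VI]{KYP}). Applying this in each epoch with $q=\lambda_{m-i+1}$ gives $Z^{(i)}_{m-i+1}(D_i)\stackrel{{\rm d}}{=}\bar Z^{(i)}_{m-i+1}(D_i)-E^{(i)}$ with $E^{(i)}\sim\mathrm{Exp}(\psi_{m-i+1}(\lambda_{m-i+1}))$ independent of $\bar Z^{(i)}_{m-i+1}(D_i)$. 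Substituting in the expression for $S_j$ and relabelling the independent copies, the display above rearranges to
\[
\bar Z_m(T_{\lambda_m})+\max_{j=0,\ldots,k}\sum_{i=1}^{j}\bigl(B_i+\bar Z_{m-i}(T_{\lambda_{m-i}})-T_{\psi_{m-i+1}(\lambda_{m-i+1})}\bigr),
\]
and deconditioning on $M$ produces the claimed equality in distribution.

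The main obstacle is the careful bookkeeping of independence required for the final assertion of the lemma. One needs to verify that, after the Wiener-Hopf substitution, the collection of ingredients $\{\bar Z_{m-i}(T_{\lambda_{m-i}})\}$, $\{T_{\psi_{m-i+1}(\lambda_{m-i+1})}\}$, $\{B_i\}$, and $M$ are jointly mutually independent. This rests on three separate facts: the strong Markov property at each claim epoch boundary (giving independence across epochs), the Wiener-Hopf decoupling of supremum and overshoot within each epoch, and the independence of $M$ from $(D_1,D_2,\ldots)$ noted above. No machinery beyond these standard facts is needed.
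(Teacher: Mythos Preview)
Your proposal is correct and follows essentially the same route as the paper: first write $\bar Y(T_\beta)$ as the maximum over epochs of ``position at start of epoch plus supremum during that epoch'', then apply the Wiener-Hopf decomposition $Z_n(T_q)\stackrel{\rm d}{=}\bar Z_n(T_q)-T_{\psi_n(q)}$ within each epoch and telescope. The paper is terser on the independence bookkeeping (it simply asserts the right-hand side consists of independent ingredients), whereas you spell out explicitly why $M$ is independent of the epoch durations and why the Wiener-Hopf decoupling preserves joint independence across epochs; this extra care is appropriate and does not constitute a different approach.
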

\begin{proof}
    Observe that $\bar Y(T_\beta)$ is attained at a time that one of the L\'evy processes $Z_n(\cdot)$ attains its maximum value in the interval in which it is active, i.e., the time interval $[A_{(n)},A_{(n+1)})$. More concretely, 
    \begin{equation}\label{eq: decomp1}\bar Y(T_\beta) \stackrel{\rm d}{=} \max\left\{\bar Z_m(T_{\lambda_m}),\max_{j=0,\ldots,M} \left(\sum_{i=1}^j \big(B_{i}+Z_{m-i+1}(T_{\lambda_{m-i+1}})\big)+\bar Z_{m-j}(T_{\lambda_{m-j}})\right)\right\}.\end{equation}
    The next step is to recall that the Wiener-Hopf decomposition entails that, for a spectrally positive L\'evy process $Z_n(\cdot)$, the random variable $Z_n(T_{\lambda})$ has the same distribution as the running maximum  $\bar Z_n(T_{\lambda})$ minus an exponentially distributed random variable with mean $1/\psi_n(\lambda_n)$ \cite[\S 6.5.2]{KYP}. This means that we can write
    \[Z_{m-i}(T_{\lambda_{m-i}}) \stackrel{\rm d}{=} \bar Z_{m-i}(T_{\lambda_{m-i}}) - T_{\psi_{m-i}(\lambda_{m-i})},\]
    with the two terms on the right hand side being independent. By inserting this into the right hand side of \eqref{eq: decomp1}, and reordering the terms, we obtain the expression given in \eqref{eq: decomp}.
\end{proof}
Define, for $n\in\{0,\ldots,m\}$,
\[{\mathscr Z}_n(\alpha, \lambda):= {\mathbb E}\, e^{-\alpha \bar Z_n(T_\lambda)}= \frac{\psi_n(\lambda) - \alpha}{\lambda - \varphi_n(\alpha)}\frac{\lambda}{\psi_n(\lambda)},\]
where we once more appealed to \cite[\S 6.5.2]{KYP}. Application of Proposition \ref{P1} then provides us with a recursion by which $\pi_m(\alpha,\beta)$ can be determined, as given in the following theorem. 
\begin{theorem}\label{T2}
    Suppose $Y(\cdot)\in {\mathscr L} [m, {\bs \lambda}, {\bs {\mathscr B}}(\cdot),{\bs \varphi}(\cdot)]$. Then we have, for any $\alpha\geqslant0$, $\beta>0$, and $n\in\{ 0,\dots,m\}$,
     \begin{equation} \label{eq: pim}
        \pi_n(\alpha, \beta) = \mathscr{Z}_n(\alpha,\lambda_n)\,\pi^\circ_n(\alpha,\beta).
    \end{equation}
    where
    \begin{align}\notag
        \pi^\circ_n(\alpha,\beta) = \: \frac{\beta}{\lambda_{n}} + \frac{\lambda_n^\circ}{\lambda_n}&\frac{\psi_n(\lambda_n)}{\psi_n(\lambda_n)-\alpha} \Bigg({\mathscr B}_{m-n+1}(\alpha)\,\mathscr{Z}_{n-1}(\alpha,\lambda_{n-1})\, \pi^\circ_{n-1}(\alpha,\beta) \:-\\
        &\frac{\alpha }{\psi_n(\lambda_n)}{\mathscr B}_{m-n+1}(\psi_n(\lambda_n))\,\mathscr{Z}_{n-1}(\psi_n(\lambda_n),\lambda_{n-1})\,\pi^\circ_{n-1}(\psi_n(\lambda_{n}),\beta) \Bigg)\label{T2_eq}
    \end{align}
    for $n\in\{1,\dots,m\}$ and $\pi_0^\circ(\alpha,\beta)=1.$
\end{theorem}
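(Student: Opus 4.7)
The plan is to combine an extension of Lemma~\ref{L3} with the generic recursion of Proposition~\ref{P1}. First, I would observe that the proof of Lemma~\ref{L3} applies verbatim with $m$ replaced by any $n\in\{1,\ldots,m\}$: starting under $\mathbb{P}_n$ the initially active L\'evy process is $Z_n$, the first major claim arrives after an $\mathrm{Exp}(\lambda_n^\circ)$ time, and the same Wiener-Hopf argument yields
\[\bar Y(T_\beta) \stackrel{\rm d}{=} \bar Z_n(T_{\lambda_n}) + \max_{j=0,\ldots,M_n} \sum_{i=1}^j \bigl(B_{m-n+i} + \bar Z_{n-i}(T_{\lambda_{n-i}}) - T_{\psi_{n-i+1}(\lambda_{n-i+1})}\bigr),\]
with all terms independent, and with $M_n$ distributed on $\{0,\ldots,n\}$ via $\mathbb{P}(M_n=k) = (\beta/\lambda_{n-k})\prod_{i=1}^k \lambda_{n-i+1}^\circ/\lambda_{n-i+1}$. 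Taking LST\,s and exploiting independence of the two summands then factorises
\[\pi_n(\alpha,\beta) = \mathscr{Z}_n(\alpha,\lambda_n)\,\pi_n^\circ(\alpha,\beta),\]
which is~\eqref{eq: pim}, where $\pi_n^\circ(\alpha,\beta)$ denotes the LST of the running-maximum term.

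The second step is to recognise this running maximum as an instance of the object $Y^\circ_n$ from Section~III.1, with the identifications $\nu_k = \psi_{m-k+1}(\lambda_{m-k+1})$, $\mathscr{C}_k(\alpha) = \mathscr{B}_k(\alpha)\,\mathscr{Z}_{m-k}(\alpha,\lambda_{m-k})$, and $p_k = \mathbb{P}(M=k)$ as defined just before Lemma~\ref{L3}. Under this identification one has $\nu_1^{(n)} = \psi_n(\lambda_n)$, $\mathscr{C}_1^{(n)}(\alpha) = \mathscr{B}_{m-n+1}(\alpha)\,\mathscr{Z}_{n-1}(\alpha,\lambda_{n-1})$, and a short telescoping computation gives $q_{m-n}=\prod_{i=1}^{m-n}\lambda_{m-i+1}^\circ/\lambda_{m-i+1}$, from which $p_0^{(n)} = p_{m-n}/q_{m-n} = \beta/\lambda_n$ and hence $1-p_0^{(n)} = \lambda_n^\circ/\lambda_n$. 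Substituting these into the recursion~\eqref{P1_eq} of Proposition~\ref{P1} yields precisely~\eqref{T2_eq}; the base case $\pi_0^\circ(\alpha,\beta)=1$ is immediate, since under $\mathbb{P}_0$ no major claim can ever arrive.

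The main obstacle is not conceptual but rather one of careful bookkeeping: correctly reconciling the generic indexing of Section~III.1 (where parameters are labelled along a single ordered vector) with the model-specific indexing by the number of remaining major clients, and in particular verifying the identity $p_0^{(n)}=\beta/\lambda_n$ via the telescoping product for $q_{m-n}$. Once this parameter correspondence is in place, the recursion~\eqref{T2_eq} drops out by direct substitution into~\eqref{P1_eq}.
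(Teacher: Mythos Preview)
Your proposal is correct and follows essentially the same approach as the paper: you invoke the Wiener-Hopf decomposition of Lemma~\ref{L3} (extended from $m$ to general $n$) to obtain the factorisation~\eqref{eq: pim}, and then identify the residual running maximum as an instance of the generic object $Y_n^\circ$ so that Proposition~\ref{P1} delivers the recursion~\eqref{T2_eq} with precisely the parameter choices $\mathscr{C}_1^{(n)}(\cdot)=\mathscr{B}_{m-n+1}(\cdot)\,\mathscr{Z}_{n-1}(\cdot,\lambda_{n-1})$, $\nu_1^{(n)}=\psi_n(\lambda_n)$, $p_0^{(n)}=\beta/\lambda_n$. Your explicit telescoping verification of $p_0^{(n)}=\beta/\lambda_n$ is a detail the paper leaves implicit, but otherwise the arguments coincide.
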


\begin{proof}
    Let $\pi_n^\circ(\alpha,\beta)$ be the LST of 
    \[\max_{j=0,\ldots,M} \sum_{i=1}^j\big(B_i+\bar Z_{n-i}(T_{\lambda_{n-i}}) - T_{\psi_{n-i+1}(\lambda_{n-i+1})}\big),\]
    for $n\in\{0,\ldots,m\}.$    Now applying Lemma \ref{L3}, we immediately obtain the identity \eqref{eq: pim}. It is left to prove that $\pi^\circ_m(\alpha,\beta)$ can be evaluated using the recursion \eqref{T2_eq}. Observe that this recursion is now a direct consequence of Proposition \ref{P1}, with
    \[{\mathscr C}_1^{(n)}(\cdot)={\mathscr B}_{m-n+1}(\cdot)\,{\mathscr Z}_{n-1}(\cdot, \lambda_{n-1}),\:\:\:\nu_1^{(n)}=\psi_n({\lambda_{n}}),\:\:\:p_0^{(n)}= \frac{\beta}{\lambda_{n}},\]
     for $n\in\{1,\ldots,m\}$. 
 \end{proof}

We have plotted in Figure \ref{fig: model with BM} the mean and variance of the running maximum as obtained through numerical inversion. We used the same model parameters, i.e., $\lambda^\circ_n = \frac{n}{4}$, $r_n=n$ and exponentially distributed claims with parameter $\mu=\frac{1}{4}$, but now a Brownian motion with variance parameter $\sigma^2=1$ has been added to each of the drift processes. For the numerical inversion  we again relied on Stehfest's inversion algorithm. The dotted lines in the figures show the results from the model {\it without} Brownian motions, which are the same as in Figure~\ref{fig: base model}. The graphs quantify by how much the mean $\mathbb{E}_m\,\bar{Y}(\cdot)$ and variance ${\rm Var}_m\,\bar{Y}(\cdot)$ increase due to the added Brownian term.

\begin{figure}[ht]
    \centering
    \resizebox{10cm}{4.5cm}{\includegraphics{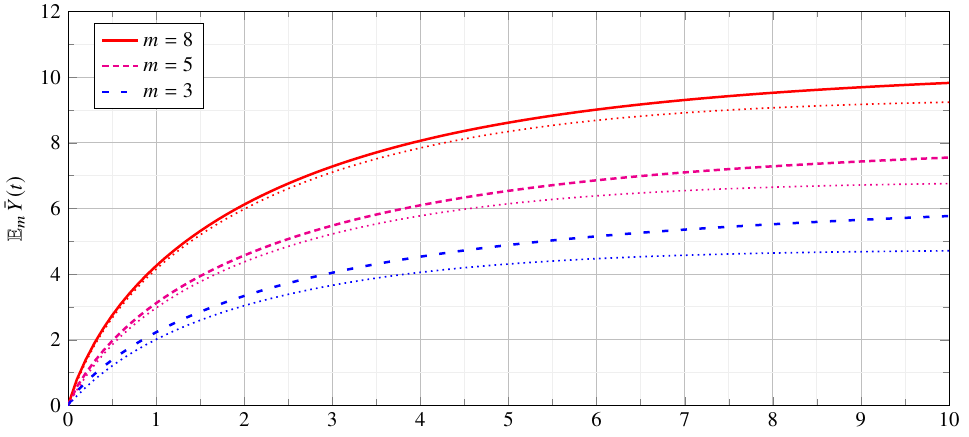}}

    \resizebox{10cm}{4.5cm}{\includegraphics{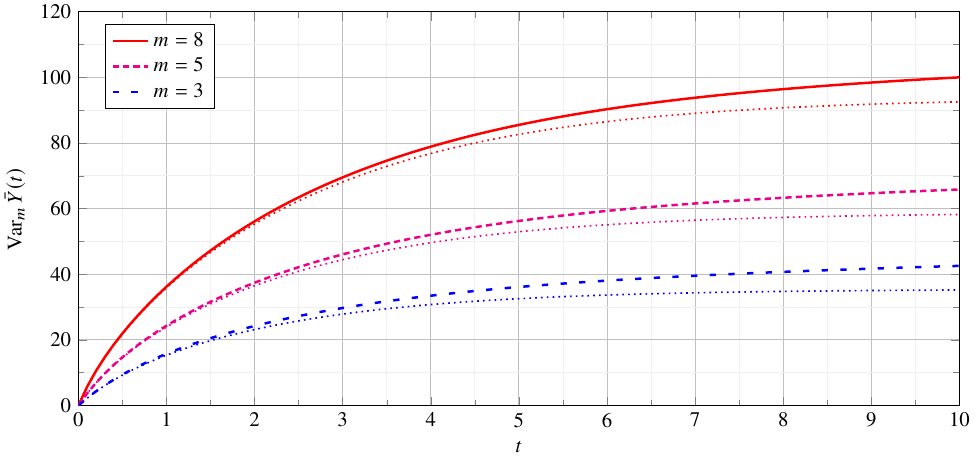}}
    
    \caption{\label{fig: model with BM}Mean (top) and variance (bottom) of the running maximum, as functions of time, in the model with Brownian motions with positive drifts, for different values of $m$. The dotted lines denote the model without the added Brownian motions.}
\end{figure}

\subsubsection*{III.4.~~Extension to the case with subordinator states}
Suppose $n\in\{0,\ldots,m\}$ is a `subordinator state', i.e., the L\'evy process $Z_n(\cdot)$ is almost surely increasing. It is noted that this case has to be dealt with separately, as the right inverse $\psi_n(\cdot)$ is ill-defined. However, evidently we have $\bar Z_n(t) =Z_n(t)$ for all $t\geqslant 0$, and therefore
\[ {\mathbb E}\, e^{-\alpha \bar Z_n(T_\lambda)}= \frac{\lambda}{\lambda - \varphi_n(\alpha)}.\]
It is now directly seen that one has to replace the recursion step \eqref{P1_eq} by
\begin{align*} \pi^\circ_n(\alpha,\beta) &=\frac{\lambda_n}{\lambda_n - \varphi_n(\alpha)} \left(\frac{\beta}{\lambda_n}+\frac{\lambda^\circ_n}{\lambda_n} \,{\mathscr B}_{m-n+1}(\alpha)\,\pi_{n-1}^\circ(\alpha,\beta)\right)=\frac{\beta+\lambda^\circ_n \,{\mathscr B}_{m-n+1}(\alpha)\,\pi_{n-1}^\circ(\alpha,\beta)}{\lambda_n - \varphi_n(\alpha)} .
\end{align*}

\section*{IV. Phase-type claims}
In this section and the next one, we consider the tail behavior of the ruin probabilities $p_m(u,\beta)$ and $p_m(u)$; the present section specifically focuses on the case that the claim-size distribution is of phase-type~\cite{BFN}. The phase-type distributions owe their popularity to the fact that they constitute a class of distributions by which one can approximate the distribution function of any random variable on the positive half-line arbitrarily closely \cite[Thm.\ III.4.2]{ASM2}.  

We proceed by introducing the notation that we use throughout this section, essentially following the setup of \cite[\S III.4]{ASM2}.
A phase-type distribution is characterized by a dimension $d\in{\mathbb N}$, a probability that the variable is zero, $0\leqslant\delta_{d+1}\leqslant 1$, a $d$-dimensional vector ${\bs\delta}$ with non-negative entries summing to $1-\delta_{d+1}$, and a $d\times d$ matrix ${\bs S}.$ The matrix ${\bs S}$, known as the {\it phase generator matrix}, is a defective rate matrix (in that at least one of the row sums is strictly negative), and the exit vector ${\bs s}$ is defined as $-{\bs S}{\bs 1}_d,$ with ${\bs 1}_d$ a $d$-dimensional all-ones column vector. The vector ${\bs\delta}$ is known as the {\it initial distribution vector}. With  the transition rate matrix
\[{\bs Q}:=\left(\begin{array}{cc}
{\bs S}&{\bs s}\\
{\bs 0}_d^\top&0
\end{array}\right),\]
and ${\bs 0}_d$ the $d$-dimensional all-zeroes column vector, a phase-type random variable is defined as the time it takes for a continuous-time Markov chain $(J_t)_{t\geqslant 0}$ with initial distribution ${\bs \delta}$ and transition rate matrix ${\bs Q}$ to reach the absorbing state $d+1$:
\[U:=\inf\{t\geqslant 0: J_t=d+1\}.\]
In the sequel we write $U\sim {\mathbb P}{\rm h}_d({\bs\delta},{\bs S})$.
Observe that our definition slightly differs from the one given in \cite[\S III.4]{ASM2}, in that we allow the random variable to attain the value $0$ with positive probability (captured by $\delta_{d+1}$). 
The LST of $U$ follows directly from e.g.\ \cite[Prop.\ III.4.1]{ASM2}, with ${\bs I}_d$ denoting the $d\times d$ identity matrix,
\begin{equation}\label{eq:LST PH}{\mathscr U}(\alpha) :={\mathbb E}\,e^{-\alpha U}= \delta_{d+1} + {\bs\delta}^\top(\alpha {\bs I}_d- {\bs S})^{-1}{\bs s},\end{equation}
for $\alpha\geqslant 0$.

\medskip

In this section we consider the instance that the claim sizes of the large clients, represented by $B_1,\ldots, B_m$, are independent and identically distributed random variables, distributed as the generic random variable $B$ with LST ${\mathscr U}(\cdot)$ as given in \eqref{eq:LST PH}; in other words, ${\bs {\mathscr B}}(\cdot)={\mathscr U}(\cdot)\,{\bs 1}^{\top}.$ We let the small clients' L\'evy processes be given by deterministic drifts, i.e., we are in the situation that $Y(\cdot)\in{\mathscr L}[m,{\bs \lambda},{\mathscr U}(\cdot)\,{\bs 1}^{\top}, {\bs r}]$.

In the next two lemmas we state a couple of convenient properties, that we need in our analysis. The derivations are based on standard matrix calculus, but we include them for completeness. 

\begin{lemma}\label{lemma: phase-type relations}
    For $\alpha,\alpha' \geqslant 0$, we have
   \[\alpha'(\alpha\bs{I} - \bs{S})^{-1} - \alpha(\alpha'\bs{I} - \bs{S})^{-1} = (\alpha'- \alpha)\left((\alpha'\bs{I} - \bs{S})^{-1} + \alpha'(\alpha'\bs{I} - \bs{S})^{-1}(\alpha \bs{I} - \bs{S})^{-1}\right).\]
\end{lemma}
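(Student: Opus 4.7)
The identity is a purely algebraic statement about matrix resolvents, and the plan is to prove it by clearing denominators. Two structural facts are needed: (i) both $\alpha\bs{I}-\bs{S}$ and $\alpha'\bs{I}-\bs{S}$ are invertible for every $\alpha,\alpha'\geqslant 0$, which holds because $\bs{S}$ is a defective sub-generator whose eigenvalues all lie in the open left half-plane, so the spectra of $\alpha\bs{I}-\bs{S}$ and $\alpha'\bs{I}-\bs{S}$ lie in the open right half-plane; and (ii) the matrices $\alpha\bs{I}-\bs{S}$ and $\alpha'\bs{I}-\bs{S}$ commute, and hence so do their inverses, because both are polynomials in $\bs{S}$. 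These two facts are what allow me to manipulate sums and products of the resolvents freely.

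Granted this, I would multiply both sides of the identity on the right by the product $(\alpha\bs{I}-\bs{S})(\alpha'\bs{I}-\bs{S})$ and verify that the resulting matrix polynomials coincide. On the left-hand side, commutativity collapses the products to yield
\[\alpha'(\alpha'\bs{I}-\bs{S})-\alpha(\alpha\bs{I}-\bs{S})=(\alpha'^2-\alpha^2)\bs{I}-(\alpha'-\alpha)\bs{S}=(\alpha'-\alpha)\bigl[(\alpha+\alpha')\bs{I}-\bs{S}\bigr].\]
On the right-hand side, the same cancellations give
\[(\alpha'-\alpha)\bigl[(\alpha\bs{I}-\bs{S})+\alpha'\bs{I}\bigr]=(\alpha'-\alpha)\bigl[(\alpha+\alpha')\bs{I}-\bs{S}\bigr].\]
The two expressions match, and multiplying back by $(\alpha\bs{I}-\bs{S})^{-1}(\alpha'\bs{I}-\bs{S})^{-1}$ recovers the claimed identity.

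There is no real obstacle; the only points worth flagging are the justification of commutativity before collapsing products, the invertibility mentioned above, and the observation that the degenerate case $\alpha=\alpha'$ is automatic since both sides vanish. An equivalent route would be to start from the standard resolvent identity $(\alpha\bs{I}-\bs{S})^{-1}-(\alpha'\bs{I}-\bs{S})^{-1}=(\alpha'-\alpha)(\alpha\bs{I}-\bs{S})^{-1}(\alpha'\bs{I}-\bs{S})^{-1}$, writing $\alpha'=\alpha+(\alpha'-\alpha)$ on the left-hand side and rearranging; but I find the clear-denominators approach more transparent.
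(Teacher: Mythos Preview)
Your proof is correct and follows essentially the same ``clear denominators'' strategy as the paper. The only cosmetic difference is that the paper pre-multiplies by $(\alpha'\bs{I}-\bs{S})$ and post-multiplies by $(\alpha\bs{I}-\bs{S})$, which makes each inverse cancel against the adjacent factor without needing to invoke commutativity explicitly; your version multiplies on one side by the product and appeals to commutativity of the resolvents, which is equally valid and arrives at the identical polynomial identity $(\alpha'-\alpha)\bigl[(\alpha+\alpha')\bs{I}-\bs{S}\bigr]$.
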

\begin{proof}
    The proof of this identity is rather elementary. Note that
        \begin{align*}
            (\alpha'\bs{I} - \bs{S})\left(\alpha'(\alpha\bs{I} - \bs{S})^{-1} - \alpha(\alpha'\bs{I} - \bs{S})^{-1}\right)(\alpha\bs{I} - \bs{S}) &= \alpha'(\alpha'\bs{I} - \bs{S}) - \alpha(\alpha\bs{I}-\bs{S})\\
            &= (\alpha'- \alpha)((\alpha\bs{I} - \bs{S}) + \alpha'\bs{I}).
        \end{align*}
        The stated follows after pre-multiplying by $(\alpha'\bs{I} - \bs{S})^{-1}$ and post-multiplying by $(\alpha\bs{I} - \bs{S})^{-1}$.
\end{proof}

\begin{lemma}\label{corollary: difference LST's with factors}
    Let $U\sim {\mathbb P}{\rm h}_{d}({\bs\delta},{\bs S})$ with LST $\mathscr{U}(\cdot)$. For any $\alpha,\alpha' \geqslant 0$ we have
    \begin{equation*}
        \frac{\alpha'\mathscr{U}(\alpha) - \alpha\mathscr{U}(\alpha')}{\alpha'-\alpha} = \mathscr{U}(\alpha') + \alpha'\bs\delta^\top\left(\left(\alpha'\bs{I} - \bs{S}\right)^{-1}\left(\alpha \bs{I} - \bs{S}\right)^{-1}\right)\bs{s}.
    \end{equation*}
\end{lemma}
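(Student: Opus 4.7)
The plan is to treat this as a direct algebraic consequence of the explicit LST representation \eqref{eq:LST PH} combined with the matrix identity of Lemma \ref{lemma: phase-type relations}. The factor structure on the left is essentially the same as that on the left of Lemma \ref{lemma: phase-type relations}, once the constant term $\delta_{d+1}$ in $\mathscr U(\cdot)$ is peeled off, so the work is bookkeeping rather than anything deep.

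First I would substitute $\mathscr U(\alpha) = \delta_{d+1} + \bs{\delta}^\top (\alpha \bs I - \bs S)^{-1}\bs s$ into the numerator and regroup:
\[
\alpha' \mathscr U(\alpha) - \alpha \mathscr U(\alpha') = (\alpha'-\alpha)\,\delta_{d+1} + \bs\delta^\top \!\left[\alpha'(\alpha \bs I - \bs S)^{-1} - \alpha(\alpha'\bs I - \bs S)^{-1}\right]\bs s .
\]
Dividing by $\alpha'-\alpha$, the first piece becomes exactly $\delta_{d+1}$, while the bracketed matrix, by Lemma \ref{lemma: phase-type relations}, equals
\[
(\alpha'-\alpha)\Big((\alpha'\bs I - \bs S)^{-1} + \alpha'(\alpha'\bs I - \bs S)^{-1}(\alpha\bs I - \bs S)^{-1}\Big).
\]
The $(\alpha'-\alpha)$ cancels cleanly, leaving
\[
\delta_{d+1} + \bs\delta^\top (\alpha'\bs I - \bs S)^{-1}\bs s + \alpha' \bs\delta^\top (\alpha'\bs I - \bs S)^{-1}(\alpha\bs I - \bs S)^{-1}\bs s,
\]
and the first two terms collapse back to $\mathscr U(\alpha')$, giving the claimed identity.

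The only point that needs a quick check is what happens at $\alpha = \alpha'$: both sides should be interpreted in the limiting sense, and since the right-hand side is a rational function of $\alpha$ that is manifestly well defined at $\alpha=\alpha'$, the identity extends by continuity (equivalently, one notes that on the left the apparent singularity at $\alpha=\alpha'$ is removable, as one checks via L'H\^opital, matching $\mathscr U(\alpha') - \alpha' \mathscr U'(\alpha')$ with the right-hand side evaluated at $\alpha=\alpha'$). I do not anticipate a genuine obstacle here; the only subtle choice is using the form of Lemma \ref{lemma: phase-type relations} that isolates $\alpha'$ on the left of the product, so that the resulting expression matches the ordering $(\alpha'\bs I - \bs S)^{-1}(\alpha\bs I - \bs S)^{-1}$ on the right-hand side of the statement rather than its transpose.
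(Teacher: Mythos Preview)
Your proposal is correct and follows essentially the same route as the paper: substitute the LST representation \eqref{eq:LST PH}, apply Lemma~\ref{lemma: phase-type relations} to the bracketed matrix difference, and recognize the first two resulting terms as $\mathscr U(\alpha')$. Your additional remark on the removable singularity at $\alpha=\alpha'$ is a nice touch that the paper does not make explicit.
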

\begin{proof}
    \begin{align*}
        \frac{\alpha'\mathscr{U}(\alpha) - \alpha\mathscr{U}(\alpha')}{\alpha'-\alpha} &= \frac{(\alpha' - \alpha)\delta_{d+1}+\bs\delta^\top\left(\alpha'\left(\alpha \bs{I} - \bs{S}\right)^{-1} - \alpha\left(\alpha' \bs{I} - \bs{S}\right)^{-1}\right)\bs{s} }{\alpha'- \alpha}\\
        &= \delta_{d+1} + \bs\delta^\top\left((\alpha'\bs{I} - \bs{S})^{-1} + \alpha'(\alpha'\bs{I} - \bs{S})^{-1}(\alpha \bs{I} - \bs{S})^{-1}\right){\bs s}\\
        &=\delta_{d+1} + \bs\delta^\top\left(\alpha'\bs{I} - \bs{S}\right)^{-1}\bs{s} + \bs\delta^\top\left(\alpha'\left(\alpha' \bs{I} - \bs{S}\right)^{-1}\left(\alpha \bs{I} - \bs{S}\right)^{-1}\right)\bs{s},
    \end{align*}
    where the second equality follows from Lemma \ref{lemma: phase-type relations}. This proves the statement. 
\end{proof}

\begin{lemma}\label{lemma: sum of two phase types}
    Let $U\sim {\mathbb P}{\rm h}_{d_U}({\bs\delta_U},{\bs S}_U)$ and $V\sim {\mathbb P}{\rm h}_{d_V}({\bs\delta_V},{\bs S}_V)$ be two independent phase-type random variables. Then $W := U+V \sim {\mathbb P}{\rm h}_{d_W}({\bs\delta_W},{\bs S}_W)$, with $d_W := d_U + d_V$, \[\bs\delta_W^\top := (\bs\delta_U^\top, \delta_{U,d_U+1}\bs{\delta}_{V}^\top),\:\:\: \delta_{W,d_W+1} := \delta_{U,d_U+1}\delta_{V,d_V+1},\] and 
    \begin{equation*}
        \bs{S}_W := \begin{pmatrix}
            \bs{S}_U & \bs{s}_U\bs{\delta}_V^\top \\
            \bs{0}_{d_V\times d_U} & \bs{S}_V
        \end{pmatrix} \:\:\text{ with }\: \:\bs{s}_W := \begin{pmatrix}
            \delta_{V,d_V+1}\bs{s}_U\\
            \bs{s}_V
        \end{pmatrix},
    \end{equation*}
    where $\bs{0}_{d_V\times d_U}$ denotes the $d_V\times d_U$ zero matrix.
\end{lemma}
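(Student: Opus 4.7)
My plan is to establish the statement by explicitly constructing a continuous-time Markov chain on the state space $\{1,\dots,d_W,d_W+1\}$ whose absorption time at $d_W+1$ has the law of $W=U+V$, and whose initial distribution and generator coincide with those given in the lemma. The intuition is to \emph{splice} the Markov chain underlying $U$ into the first block of phases $\{1,\dots,d_U\}$ with the one underlying $V$ into the second block $\{d_U+1,\dots,d_W\}$: whenever the first chain is about to be absorbed, it is instead routed into an initial state of the second chain according to $\bs\delta_V$ (or straight into $d_W+1$, if $V$ has an atom at zero), after which the second chain runs independently until absorption. The strong Markov property applied at the instant the chain leaves the first block then shows that the total absorption time decomposes as an independent sum distributed as $U+V$.

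It remains to read off the parameters of this spliced chain. For the initial distribution, since $U=0$ occurs with probability $\delta_{U,d_U+1}$, in which case the chain must start according to the initial law of $V$, one finds that the probability of starting in first-block state $i$ equals $\delta_{U,i}$, in second-block state $j$ equals $\delta_{U,d_U+1}\delta_{V,j}$, and already absorbed equals $\delta_{U,d_U+1}\delta_{V,d_V+1}$; this matches the claimed $\bs\delta_W^\top=(\bs\delta_U^\top,\delta_{U,d_U+1}\bs\delta_V^\top)$ and $\delta_{W,d_W+1}=\delta_{U,d_U+1}\delta_{V,d_V+1}$. For the generator, transitions within each block are inherited, giving the diagonal blocks $\bs S_U$ and $\bs S_V$; the rate $s_{U,i}$ at which phase $i$ of the first block would exit to absorption in $U$'s chain is rerouted to second-block phase $j$ with probability $\delta_{V,j}$ (producing the off-diagonal block $\bs s_U\bs\delta_V^\top$) and to $d_W+1$ directly with probability $\delta_{V,d_V+1}$ (producing the upper portion $\delta_{V,d_V+1}\bs s_U$ of $\bs s_W$); the lower portion of $\bs s_W$ is simply $\bs s_V$, and no transitions go from the second block into the first.

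As an independent sanity check, one could alternatively verify the result at the level of LSTs. Using $(\alpha\bs I-\bs S_W)^{-1}$ in its block-triangular form with upper-right block $(\alpha\bs I-\bs S_U)^{-1}\bs s_U\bs\delta_V^\top(\alpha\bs I-\bs S_V)^{-1}$, a short block-matrix computation shows that $\delta_{W,d_W+1}+\bs\delta_W^\top(\alpha\bs I-\bs S_W)^{-1}\bs s_W$ factors precisely as $\mathscr{U}(\alpha)\mathscr{V}(\alpha)$, which by independence is the LST of $U+V$. There is no serious obstacle here; the only subtlety is the careful bookkeeping of the atoms at zero $\delta_{U,d_U+1}$ and $\delta_{V,d_V+1}$, which are responsible for the atypical entries $\delta_{U,d_U+1}\bs\delta_V^\top$ in $\bs\delta_W$ and $\delta_{V,d_V+1}\bs s_U$ in $\bs s_W$, and which slightly extend the standard convolution formula of \cite[\S III.4]{ASM2}.
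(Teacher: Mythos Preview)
Your argument is correct: the splicing construction together with the strong Markov property at the exit time from the first block does yield an absorption time distributed as $U+V$, and your reading-off of the parameters (including the handling of the atoms at zero) is accurate. However, your primary route differs from the paper's. The paper proceeds purely analytically: it writes out the product $\mathscr{U}(\alpha)\mathscr{V}(\alpha)$ using \eqref{eq:LST PH}, expands into four terms, and then recognizes the result as $\delta_{W,d_W+1}+\bs\delta_W^\top(\alpha\bs I-\bs S_W)^{-1}\bs s_W$ by assembling the pieces into a single block-matrix expression via the explicit inverse of the block-upper-triangular $\alpha\bs I-\bs S_W$. In other words, what you offer as a ``sanity check'' is in fact the paper's entire proof. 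Your probabilistic construction has the advantage of explaining \emph{why} the parameters take the form they do (each entry acquires a clear interpretation as a routing probability or rate), and it avoids the block-inverse algebra; the paper's LST computation, on the other hand, is self-contained within the transform machinery already in use in Section~IV and dovetails directly with Lemma~\ref{corollary: difference LST's with factors} and the subsequent induction.
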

\begin{proof}
    Fixing $\alpha\geqslant0$, our approach is to show that the LST of $W$ in any argument $\alpha$ matches the LST of a random variable with a ${\mathbb P}{\rm h}_{d_W}({\bs\delta_W},{\bs S}_W)$ distribution. Let $\mathscr{U}(\alpha)$ and $\mathscr{V}(\alpha)$ denote the LST\,s of $U$ and $V$ respectively, and let $\mathscr{W}(\alpha)$ denote the LST of $W$. By the independence of $U$ and $V$ we have
    \begin{align}\notag
        \mathscr{W}(\alpha) &= \mathscr{U}(\alpha)\cdot\mathscr{V}(\alpha) =\big(\delta_{U,d_U+1} + {\bs\delta}_U^\top(\alpha {\bs I}_d- {\bs S}_U)^{-1}{\bs s_U}\big)\big(\delta_{V,d_V+1} + {\bs\delta}_V^\top(\alpha {\bs I}_d- {\bs S}_V)^{-1}{\bs s}_V\big)\\
       \notag  &= \delta_{U,d_U+1}\delta_{V,d_V+1} + \delta_{U,d_U+1}\bs\delta_V^\top(\alpha\bs{I} - \bs{S}_V)^{-1}\bs{s}_v + \delta_{V,d_V+1}\bs\delta_U^\top(\alpha\bs{I} - \bs{S}_U)^{-1}\bs{s}_U \\
        &\:\:\:\:\:+ \bs\delta_U^\top(\alpha\bs{I} - \bs{S}_U)^{-1}\bs{s}_U \cdot\bs\delta_V^\top(\alpha\bs{I} - \bs{S}_V)^{-1}\bs{s}_V. \label{eq: idenph}
        \end{align}
        Using the compact notation
    \[{\bs S}_{U,V}:= \begin{pmatrix}
            (\alpha\bs{I} - \bs{S}_U)^{-1} & (\alpha\bs{I} - \bs{S}_U)^{-1}(\bs{s}_U\bs\delta_V^\top) (\alpha\bs{I} - \bs{S}_V)^{-1}\\
            \bs{0}_{d_V\times d_U} &(\alpha\bs{I} - \bs{S}_V)^{-1}
        \end{pmatrix},\] the identity \eqref{eq: idenph} can be rewritten in block matrix form: 
        \begin{align}
        \notag \mathscr{W}(\alpha)&= \delta_{U,d_U+1}\delta_{V,d_V+1} + 
        \begin{pmatrix}
            \bs\delta_U \\
            \delta_{U,d_U+1}\bs{\delta}_V
        \end{pmatrix}^\top 
        {\bs S}_{U,V}
        \cdot
        \begin{pmatrix}
            \delta_{V,d_V+1}\bs{s}_U \\
            \bs{s}_V
        \end{pmatrix} \\
        \notag &= \delta_{U,d_U+1}\delta_{V,d_V+1} + 
        \begin{pmatrix}
            \bs\delta_U \\
            \delta_{U,d_U+1}\bs{\delta}_V
        \end{pmatrix}^\top 
        \cdot
        \begin{pmatrix}
            \alpha\bs{I} - \bs{S}_U & -\bs{s}_U\bs\delta_V^\top \\
            \bs{0}_{d_V\times d_U} & \alpha\bs{I} - \bs{S}_V
        \end{pmatrix}^{-1}
        \cdot
        \begin{pmatrix}
            \delta_{V,d_V+1}\bs{s}_U \\
            \bs{s}_V
        \end{pmatrix} \\
        &= \delta_{W,d_W+1} + \bs\delta_W^\top(\alpha\bs{I} - \bs{S}_W)^{-1}\bs{s}_W.\label{eq: phfin}
    \end{align}
    We recognize in \eqref{eq: phfin} the LST of a phase-type variable with initial distribution vector $\bs\delta_W$ and phase generator matrix $\bs{S}_W$, as defined in the statement of the lemma.
\end{proof}

We proceed by proving that, for $Y(\cdot)\in{\mathscr L}[m,{\bs \lambda},{\mathscr U}(\cdot)\,{\bs 1}^{\top}, {\bs r}]$ and any given $\beta>0$, $\pi_n(\alpha,\beta)$ can be interpreted in terms of the LST of a phase-type random variable. We do so through an inductive proof. We throughout write $\nu_n:=\lambda_n/r_n$. 

\medskip 

{\it --- Step 1.} For $n=1$, by Theorem \ref{T1} and Lemma \ref{corollary: difference LST's with factors}, we find, after a few straightforward algebraic manipulations,
\begin{align*}
    \pi_1(\alpha,\beta) &= \frac{\beta}{\lambda_1} + \frac{\lambda_1^\circ}{\lambda_1} \frac{\nu_1\mathscr{U}(\alpha) - \alpha\mathscr{U}(\nu_1)}{\nu_1 - \alpha} \\
    &= \frac{\beta}{\lambda_1} + \frac{\lambda_1^\circ}{\lambda_1}\left(\mathscr{U}(\nu_1) + \bs\delta^\top\left(\nu_1\left(\nu_1 \bs{I} - \bs{S}\right)^{-1}\left(\alpha \bs{I} - \bs{S}\right)^{-1}\right)\bs{s}\right) = \delta_{1,d+1} +  \bs{\delta}_1^\top\left(\alpha\bs{I} - \bs{S}_1\right)^{-1}\bs{s}
\end{align*}
with, recalling that $\lambda_1=\lambda_1^\circ+\beta$, 
\begin{equation}
    \delta_{1,d+1} := \frac{\beta}{\lambda_1} + \frac{\lambda_1^\circ}{\lambda_1}\mathscr{B}(\nu_1), \hskip0.2cm \bs\delta_1^\top := \frac{\lambda_1^\circ}{\lambda_1}\nu_1 \bs\delta^\top\left(\nu_1\bs{I} - \bs{S}\right)^{-1} \text{ and}\hskip0.2cm \bs{S}_1 := \bs{S}.
\end{equation}
Note that $\bs\delta_1$ is a valid initial distribution vector, as its entries are all positive and
\begin{align*}
    \bs\delta_1^\top\bs{1} + \delta_{1,d+1}&= \frac{\beta}{\lambda_1} + \frac{\lambda_1^\circ}{\lambda_1}\left(\delta_{d+1} + \bs\delta^\top\left(\nu_1 \bs{I} - \bs{S}\right)^{-1}\bs{s}\right) + \frac{\lambda_1^\circ}{\lambda_1}\nu_1\bs\delta^\top\left(\nu_1 \bs{I} - \bs{S}\right)^{-1}\bs{1}\\
    &= \frac{\beta}{\lambda_1} + \delta_{d+1}\frac{\lambda_1^\circ}{\lambda_1} + \frac{\lambda_1^\circ}{\lambda_1}\bs\delta^\top\left(\nu_1 \bs{I} - \bs{S}\right)^{-1}\left( \bs{I}\bs{s} + \nu_1\bs{I}\bs{1}\right) \\
    &= \frac{\beta}{\lambda_1} + \delta_{d+1}\frac{\lambda_1^\circ}{\lambda_1} + \frac{\lambda}{\lambda_1}\bs\delta^\top\left(\nu_1 \bs{I} - \bs{S}\right)^{-1}\left( -\bs{S} + \nu_1\bs{I}\right)\bs{1} \\
    &= \frac{\beta}{\lambda_1} + \delta_{d+1}\frac{\lambda}{\lambda_1} + \frac{\lambda_1^\circ}{\lambda_1}\bs\delta^\top\bs{1} = \frac{\beta}{\lambda_1} + \frac{\lambda_1^\circ}{\lambda_1} = 1.
\end{align*}
Therefore, conditional on $N(0) = 1$, we have that $\bar{Y}(T_\beta)\sim {\mathbb P}{\rm h}_{d}({\bs\delta}_1,{\bs S}_1).$

\medskip

{\it --- Step 2.} In our induction step we assume that, conditional on $N(0)= k$, $\bar{Y}(T_\beta) $ has a ${\mathbb P}{\rm h}_{kd}({\bs\delta}_k,{\bs S}_k)$ distribution for any $k<n$, for some initial distribution vectors $\bs\delta_k$ of length $kd$ and some $kd\times kd$ generator matrices $\bs{S}_k$, with $\bs{s}_k := -\bs{S}_k\bs{1}_{kd}$. Firstly, $\mathscr{U}(\alpha)\,\pi_{n-1}(\alpha,\beta)$ is, by the induction hypothesis, 
 the LST of the sum of two independent phase-type variables. By Lemma \ref{lemma: sum of two phase types} we then find that this sum is also of phase-type, with a distribution that is characterized as ${\mathbb P}{\rm h}_{nd}({\bs\delta}'_n,{\bs S}_n)$, where 
\begin{equation*}
    \bs\delta_n' = \begin{pmatrix}
        \bs\delta_{n-1} \\
        \delta_{n-1,(n-1)d+1}\bs\delta
    \end{pmatrix}
\end{equation*}
is a vector of length $nd$, and 
\begin{equation}\label{eq: recu S_n}
     \bs{S}_n = \begin{pmatrix}
        \bs{S}_{n-1} & \bs{s}_{n-1}\bs{\delta}^\top \\
        \bs{0}_{d\times d} & \bs{S}
    \end{pmatrix} ,
\end{equation}
an $nd\times nd$ matrix; in addition, $\bs{s}_n = -\bs{S}_n\bs{1}_{nd}.$
When conditioning on $N(0)=n$ we obtain, from Theorem \ref{T1} and Lemma \ref{corollary: difference LST's with factors},
\begin{align*}
    \pi_n(\alpha,\beta) &= \frac{\beta}{\lambda_n} + \frac{\lambda_n^\circ}{\lambda_n}\frac{\nu_n\mathscr{U}(\alpha)\pi_{n-1}(\alpha,\beta) - \alpha\mathscr{U}(\nu_n)\pi_{n-1}(\nu_n,\beta)}{\nu_n - \alpha} \\
    &= \frac{\beta}{\lambda_n} + \frac{\lambda_n^\circ}{\lambda_n}\left(\delta_{n, nd+1}' + \bs\delta_n'^\top\left(\nu_n\bs{I} - \bs{S}_n\right)^{-1}\bs{s}_n + \nu_n\bs\delta_n'^\top\left(\nu_n\bs{I} - \bs{S}_n\right)^{-1}\left(\alpha\bs{I} - \bs{S}_n\right)^{-1}\bs{s}_n\right) \\
    &= \delta_{n,nd+1} + \bs\delta_n^\top\left(\alpha\bs{I} - \bs{S}_n\right)^{-1}\bs{s}_n,
\end{align*}
with
\begin{align}
    \notag\delta_{n,nd+1} &:= \frac{\beta}{\lambda_n} + \frac{\lambda_n^\circ}{\lambda_n}\delta_{n, nd+1}' + \frac{\lambda_n^\circ}{\lambda_n}\bs\delta_n'^\top\left(\nu_n\bs{I} - \bs{S}_n\right)^{-1}\bs{s}_n \\
    \label{eq: recu delta_n}\bs{\delta}_n^\top &:= \frac{\lambda_n^\circ}{\lambda_n}\nu_n\bs\delta_n'^\top\left(\nu_n\bs{I} - \bs{S}_n\right)^{-1}= \frac{\lambda_n^\circ}{\lambda_n}\nu_n \left(\bs\delta_{n-1}^\top,\delta_{n-1,(n-1)d+1}\bs\delta^\top \right) \left(\nu_n\bs{I} - \bs{S}_n\right)^{-1}
\end{align}
With a simple calculation, similar to the one done in step 1, one can verify
\begin{equation*}
    \delta_{n,nd+1} = 1 - \bs\delta_n^\top\bs{1},
\end{equation*}
so that $\bs\delta_n^\top$ is a valid initial distribution. We conclude that, conditional on $N(0) = n$, $\bar{Y}(T_\beta)$ is a phase-type random variable characterized as $\mathbb{P}{\rm h}_{nd}(\bs\delta_n, \bs{S}_n)$. 

From the above, the parameters of $\mathbb{P}{\rm h}_{nd}(\bs\delta_n, \bs{S}_n)$ can be identified. 
Concretely, 
there is a straightforward solution to the recursion found in \eqref{eq: recu S_n}, which is given by the following $nd\times nd$ matrix:
\begin{equation}\label{eq: matrix S_n}
    \bs{S}_n := \begin{pmatrix}
        \bs{S} & \bs{s}_1\bs\delta^\top & \dots & \vdots & \vdots \\
        \bs{0}_{d\times d} & \bs{S} & & \bs{s_{n-2}\delta}^\top& \bs{s_{n-1}\delta}^\top \\
        \vdots &&\ddots & \vdots & \vdots \\
        \bs{0}_{d\times d}&\bs{0}_{d\times d}&& \bs{S} & \vdots\\ 
        \bs{0}_{d\times d}&\bs{0}_{d\times d}&\dots & \bs{0}_{d\times d} & \bs{S}
    \end{pmatrix}
\end{equation}

We summarize our findings in the following theorem.

\begin{theorem}\label{theorem: phase-type running max} Suppose $Y(\cdot)\in{\mathscr L}[m,{\bs \lambda},{\mathscr U}(\cdot)\,{\bs 1}^{\top}, {\bs r}]$ with ${\mathscr U}(\cdot)$ satisfying \eqref{eq:LST PH}.
    For given $\beta>0$ and $n\in\{1\dots,m\}$, when conditioning on $N(0)=n$, we have that $\bar{Y}(T_\beta) \sim {\mathbb P}{\rm h}_{nd}({\bs\delta}_n,{\bs S}_n)$, where $\bs\delta_{n}$ can be derived inductively from \eqref{eq: recu delta_n}, and $\bs{S}_n$ is given by \eqref{eq: matrix S_n}.
\end{theorem}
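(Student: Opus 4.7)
The plan is a proof by induction on $n$, driven by the recursion for $\pi_n(\alpha,\beta)$ from Theorem~\ref{T1}. The two workhorses are Lemma~\ref{corollary: difference LST's with factors}, which rewrites the divided difference $(\nu_n \mathscr{U}(\alpha) - \alpha\mathscr{U}(\nu_n))/(\nu_n - \alpha)$ in a form that is manifestly the LST of a phase-type variable, and Lemma~\ref{lemma: sum of two phase types}, which keeps us inside the phase-type class when we multiply $\mathscr{U}(\alpha)$ by the inductively obtained $\pi_{n-1}(\alpha,\beta)$. Throughout, the shorthand $\nu_n:=\lambda_n/r_n$ will be in force.

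\textbf{Base case.} First I would specialize Theorem~\ref{T1} to $n=1$, apply Lemma~\ref{corollary: difference LST's with factors} with $\alpha'=\nu_1$, and read off candidates $\delta_{1,d+1}$, $\bs\delta_1$, $\bs S_1$ from the resulting expression. Matching terms against~\eqref{eq:LST PH} gives the base case once one verifies that $\bs\delta_1^\top \bs 1 + \delta_{1,d+1}=1$; this verification is a short manipulation using $-\bs S\bs 1 = \bs s$ and the fact that $\bs\delta^\top\bs 1 + \delta_{d+1}=1$ for the underlying phase-type law.

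\textbf{Inductive step.} Assume the claim for all $k<n$. The recursion~\eqref{T1_eq} expresses $\pi_n(\alpha,\beta)$ as a linear combination of $\mathscr{U}(\alpha)\pi_{n-1}(\alpha,\beta)$ and $\mathscr{U}(\nu_n)\pi_{n-1}(\nu_n,\beta)$ with the divided-difference structure of Lemma~\ref{corollary: difference LST's with factors}. By the induction hypothesis, $\pi_{n-1}(\alpha,\beta)$ is itself the LST of a phase-type variable with parameters $(\bs\delta_{n-1},\bs S_{n-1})$, and $\mathscr{U}(\alpha)$ is phase-type by assumption, so $\mathscr{U}(\alpha)\pi_{n-1}(\alpha,\beta)$ corresponds to a sum of two independent phase-type variables. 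Apply Lemma~\ref{lemma: sum of two phase types} to obtain a dimension-$nd$ phase-type representation $(\bs\delta_n',\bs S_n)$ of this sum, where $\bs S_n$ has the upper-triangular block form
\[
\bs S_n = \begin{pmatrix} \bs S_{n-1} & \bs s_{n-1}\bs\delta^\top \\ \bs 0_{d\times (n-1)d} & \bs S \end{pmatrix},
\]
which is exactly the recursion~\eqref{eq: recu S_n}. A second application of Lemma~\ref{corollary: difference LST's with factors}, now with $\alpha'=\nu_n$ and the phase-type law $(\bs\delta_n',\bs S_n)$, yields $\pi_n(\alpha,\beta)$ in the standard phase-type form \eqref{eq:LST PH} with $(\bs\delta_n,\bs S_n)$ as in \eqref{eq: recu delta_n} and \eqref{eq: recu S_n}. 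Unfolding the recursion for $\bs S_n$ then gives the explicit block-bidiagonal representation \eqref{eq: matrix S_n}.

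\textbf{Valid initial distribution.} The key sub-task at each level is checking $\bs\delta_n^\top\bs 1 + \delta_{n,nd+1}=1$. This follows from a short calculation that uses $\bs s_n = -\bs S_n\bs 1_{nd}$ together with the identity $(-\bs S_n + \nu_n\bs I)\bs 1 = \bs s_n + \nu_n\bs 1$, converting the term $\bs\delta_n'^\top(\nu_n\bs I - \bs S_n)^{-1}(\bs s_n + \nu_n\bs 1)$ into $\bs\delta_n'^\top\bs 1$, which by the induction hypothesis equals $1 - \delta_{n,nd+1}'$; combined with the weight $\lambda_n^\circ/\lambda_n$ and the atom $\beta/\lambda_n$ one recovers~$1$.

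\textbf{Main obstacle.} None of the individual manipulations is deep, but the main bookkeeping challenge is keeping track of the atom at zero across two layers of recursion (once inside Lemma~\ref{lemma: sum of two phase types} for the summed phase-type law, and once more when the divided difference of Lemma~\ref{corollary: difference LST's with factors} is applied). The cleanest way to handle this is to carry $(\bs\delta_k,\bs S_k,\delta_{k,kd+1})$ as a single object throughout the induction, verifying the probability constraint immediately after each of the two steps so that the induction hypothesis is genuinely available at the next level.
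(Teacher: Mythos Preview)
Your proposal is correct and follows essentially the same route as the paper: induction on $n$ using the recursion of Theorem~\ref{T1}, with Lemma~\ref{lemma: sum of two phase types} applied to recognize $\mathscr{U}(\alpha)\pi_{n-1}(\alpha,\beta)$ as a phase-type LST with generator $\bs S_n$ of the block form~\eqref{eq: recu S_n}, followed by Lemma~\ref{corollary: difference LST's with factors} with $\alpha'=\nu_n$ to extract the new initial vector~\eqref{eq: recu delta_n}, and the same $-\bs S_n\bs 1 = \bs s_n$ manipulation to verify the probability constraint. Your bookkeeping remark about carrying the atom at zero through both steps is exactly the care the paper takes, and your dimension $\bs 0_{d\times(n-1)d}$ for the zero block is in fact more accurate than the paper's notation.
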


In the remainder of this section we identify the exact asymptotics of $p_m(u, \beta)$, i.e., we find an explicit function $\Psi(\cdot)$ such that $p_m(u,\beta)/\Psi(u)\to 1$ as $u\to\infty$. We determine the exact asymptotics of $p_m(u,\beta)$ in two steps, eventually leading to the result  presented in Theorem \ref{th:PH_AS}.

\medskip

{\it --- Step 1.} We start by recalling a number of known properties of phase-type distributions. In the first place, all eigenvalues $\mu_i$ of ${\bs S}$ have a negative real part \cite[Corollary 3.1.15]{BFN}. Also, it is an immediate consequence of the Perron-Frobenius theorem that the eigenvalue with the largest real part, which we let without loss of generality be $\mu_1$, has no imaginary part. Following the setup of e.g.\ \cite[Eqn.\ (4.3)]{BFN}, we let there be $D$ distinct eigenvalues, which we order such that their real parts are non-decreasing, and let $d_i$ be the multiplicity of the $i$-th eigenvalue (such that $\sum_{i=1}^D d_i=d$). 

As we have seen, $\bs{S}_m$ is an upper triangular block matrix, with the matrix $\bs{S}$ appearing $m$ times on the diagonal. We therefore have that $\bs{S}$ and $\bs{S}_m$ have the same eigenvalues, but the multiplicity of any specific eigenvalue of $\bs{S}_m$ is $m$ times the multiplicity of that eigenvalue in $\bs{S}$. Therefore, letting $d_{m,i}$ denote the multiplicity of the $i$-th eigenvalue of $\bs{S}_m$, we have that $d_{m,i} = md_i$.

\medskip

{\it --- Step 2.}
The result of Theorem \ref{theorem: phase-type running max} states that, conditional on $N(0)=m$, $\bar{Y}(T_\beta)\sim{\mathbb P}_{md}({\bs \delta}_m(\beta),{\bs S}_m)$. Therefore, using the Jordan normal form of $\bs{S}_m$,  the density of the running maximum reads
\[{\mathbb P}_m(\bar{Y}(T_\beta)\in{\rm d}u) =\sum_{i=1}^D \sum_{j=1}^{md_i} c_{i,j}(\beta) \,e^{\mu_i u}\frac{(-\mu_i)^j u^{j - 1}}{(j-1)!}\,{\rm d}u,\]
for certain constants $c_{i,j}(\beta)$. Isolating the dominant term and denoting $\mu:=-\mu_1>0$ gives
\begin{equation*}
    c_{1,md_1}(\beta) \,e^{-\mu u}\frac{\mu^{md_1}u^{md_1-1}}{(md_1-1)!}\,{\rm d}u.
\end{equation*}
Defining $\Psi_j(u):= e^{-\mu u}\,u^{jd_1-1}$, we finally obtain
\begin{equation*}
    p_m(u,\beta) = {\mathbb P}_m\left(\bar{Y}(T_\beta)\geqslant u\right)=c_{1,md_1}(\beta)\frac{\mu^{md_1 - 1}}{(md_1-1)!} \Psi_m(u)+ o(\Psi_m(u)),
\end{equation*}
as $u\to\infty$. 
Upon combining the above steps, we have thus obtained the following result.

\begin{theorem} \label{th:PH_AS} Suppose $Y(\cdot)\in{\mathscr L}[m,{\bs \lambda},{\mathscr U}(\cdot)\,{\bs 1}^{\top}, {\bs r}]$  with ${\mathscr U}(\cdot)$ satisfying \eqref{eq:LST PH}. 
    As $u\to\infty$,
    \[\frac{p_m(u,\beta)}{\Psi_m(u)} \to c_{1,md_1}(\beta)\,\frac{\mu^{md_1}}{(md_1-1)!}.\]
\end{theorem}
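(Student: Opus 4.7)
The plan is to read off the leading-order behavior of $p_m(u,\beta)$ directly from the phase-type representation of $\bar Y(T_\beta)$ established in Theorem \ref{theorem: phase-type running max}, combined with a spectral analysis of the block matrix $\bs S_m$ given in \eqref{eq: matrix S_n}.

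First I would observe that, conditional on $N(0)=m$, the density of $\bar Y(T_\beta)$ on $(0,\infty)$ equals $\bs\delta_m^\top e^{\bs S_m u}\bs s_m$. Because $\bs S_m$ is block upper-triangular with the single block $\bs S$ repeated $m$ times along the diagonal, its spectrum coincides with that of $\bs S$ but with every algebraic multiplicity multiplied by $m$, so a Jordan decomposition expands $e^{\bs S_m u}$ as a sum $\sum_{i=1}^{D} e^{\mu_i u} P_i(u)$ with $\deg P_i\leqslant md_i-1$. This yields exactly the representation
\[
{\mathbb P}_m(\bar Y(T_\beta)\in{\rm d}u)=\sum_{i=1}^{D}\sum_{j=1}^{md_i}c_{i,j}(\beta)\,e^{\mu_i u}\frac{(-\mu_i)^j u^{j-1}}{(j-1)!}\,{\rm d}u,
\]
for suitable constants $c_{i,j}(\beta)$. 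Invoking \cite[Corollary 3.1.15]{BFN} gives $\mathrm{Re}(\mu_i)<0$ for every $i$, and Perron--Frobenius ensures that the eigenvalue $\mu_1=-\mu$ of largest real part is itself real; the asymptotically dominant term is therefore the one with $i=1$ and $j=md_1$, since it combines the slowest exponential rate with the highest admissible polynomial degree, while every other term is either more heavily exponentially damped or carries a strictly lower polynomial power.

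Finally I would pass from density to tail asymptotics by integrating the dominant contribution from $u$ to $\infty$. The substitution $s=u+t$ together with dominated convergence gives $\int_u^\infty e^{-\mu s}s^{md_1-1}\,\diff s\sim \mu^{-1}e^{-\mu u}u^{md_1-1}$ as $u\to\infty$, and multiplying by the prefactor $c_{1,md_1}(\beta)\,\mu^{md_1}/(md_1-1)!$ produces the stated limit, with all subdominant terms genuinely contributing $o(\Psi_m(u))$. The main subtlety I anticipate is justifying that $c_{1,md_1}(\beta)$ is not accidentally zero: this reduces to verifying that the Jordan chain of $\bs S_m$ at $\mu_1$ really has full length $md_1$ and couples non-trivially to both $\bs\delta_m$ and $\bs s_m$, which can be read off from the explicit coupling blocks $\bs s_k\bs\delta^\top$ in \eqref{eq: matrix S_n} together with the inductive definition \eqref{eq: recu delta_n} of $\bs\delta_m$.
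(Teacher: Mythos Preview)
Your proposal is correct and follows essentially the same route as the paper: both arguments invoke Theorem~\ref{theorem: phase-type running max}, exploit the block upper-triangular structure of $\bs S_m$ in \eqref{eq: matrix S_n} to conclude that its eigenvalues are those of $\bs S$ with multiplicities scaled by $m$, expand the density via the Jordan form, and isolate the dominant term. You are in fact slightly more careful than the paper in two places---you make the passage from density to tail explicit via integration, and you flag the question of whether $c_{1,md_1}(\beta)\neq 0$---whereas the paper simply asserts the final asymptotic without addressing either point (so the non-vanishing of the constant remains unproved there as well; the stated limit is formally correct even if the constant were zero, just uninformative).
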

This result informally says that the tail distribution of $\bar Y(T_\beta)$ asymptotically behaves, up to a multiplicative constant, as the tail distribution of an Erlang random variable with shape parameter $md_1$ and scale parameter $\mu.$ Indeed, the above result gives rise to the approximation
\[p_m(u,\beta)\approx  c_{1,md_1}(\beta)e^{-\mu u}\,\frac{\mu^{md_1} u^{md_1-1}}{(md_1-1)!}\]
for $u$ large.

\medskip

Figure \ref{fig: asymp erl} gives an impression of the typical differences between exact, asymptotic and Monte Carlo based estimated values of the ruin probability. For the results shown, we evaluated the model with Erlang distributed claims, arrival rates $\lambda^\circ_n = n$, and only positive drifts $r_n=\frac{n}{100}$ (with $n\in\{0,\dots,m\}$). The claim sizes are independent and Erlang distributed with parameters $K=2$ and $\mu=1$. The exact values are determined by applying Theorem \ref{theorem: phase-type running max}, the asymptotic values by applying Theorem \ref{th:PH_AS}, and for the Monte Carlo based estimated values we used large numbers of sample paths, ranging from 200\,000 for smaller values of $u$, to 1\,000\,000 for larger values of $u$.

\begin{figure}[ht]
  \centering
    \includegraphics{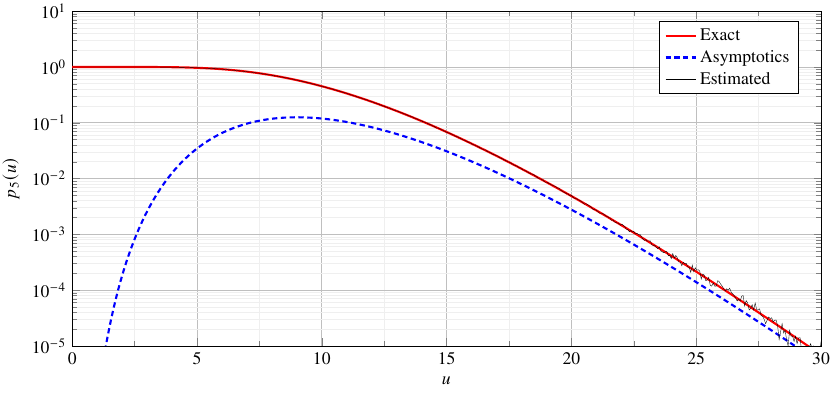}
    \caption{\label{fig: asymp erl}Exact, asymptotic and Monte Carlo based estimated values of the ruin probability, for $m=5$ and Erlang distributed claim sizes, in the model with only positive drifts.}
\end{figure}

\section*{V. Regularly varying claims}
Where in the previous section we considered claims to be of phase-type, a quintessential class of light-tailed distributions, in this section the focus lies on claims that are assumed to be heavy-tailed. Specifically, the class of distributions we work with, characterized via their {\it regularly-varying} tail, is defined as follows \cite{BGT}.

Define, for a non-negative random variable $U$, the coefficients\[b_i:= \lim_{\alpha\downarrow 0} \frac{{\rm d}^i}{{\rm d}\alpha^i} {\mathbb E}\,e^{-\alpha U},\]
such that $(-1)^ib_i$ can be interpreted as the $i$-th moment of $U$ (if it exists). 
We say that a function $L(\cdot)$ is {\it slowly varying} if for any $t>0$ we have that $L(tx)/L(x)\to 1$ as $x\to\infty$; examples of slowly-varying functions are constants and the (iterated) logarithm.
Consider $\delta\in{\mathbb R}^+\setminus {\mathbb N}$ and $\theta>0$, and let ${\mathfrak n}_\delta:=\lfloor \delta\rfloor.$ 
Then we say that a non-negative random variable $U$ is regularly-varying, throughout this section denoted by $U\sim {\mathbb R}{\rm V}_\delta(\theta, L)$, if its LST   can be expressed in terms of the `truncated power series':
\begin{equation}\label{eq:RV}{\mathscr U}(\alpha):={\mathbb E}\,e^{-\alpha U} = \sum_{i=0}^{{\mathfrak n}_\delta} b_i \frac{\alpha^i}{i!} + \theta \alpha^\delta\,L(1/\alpha)+o(\alpha^\delta),\end{equation}
as $\alpha \downarrow 0$, for a slowly-varying function $L(\cdot)$ and constants $b_0,\ldots,b_{{\mathfrak n}_\delta}$. Here $b_i$ represents the $i$-th moment of $B$ (where, evidently, $b_0=1$).

The key result we rely on in this section is a so-called {\it Tauberian theorem} \cite[Theorem 8.1.6]{BGT}, relating the LST's shape around $0$ to the tail behavior of the underlying random variable. It entails that $U\sim {\mathbb R}{\rm V}_\delta(\theta,L)$ is {equivalent} to 
\begin{align}\label{eq:TAIL}\lim_{x\to\infty}{\mathbb P}(U>x) \frac{x^\delta}{L(x)}= \theta\,\frac{(-1)^{{\mathfrak n}_\delta}}{\Gamma(1-\delta)}\end{align}
(where, as an aside, we mention that $(-1)^{{\mathfrak n}_\delta}/\Gamma(1-\delta)$ is positive). A crucial element in this result is the {\it equivalence} between \eqref{eq:RV} and \eqref{eq:TAIL}: we can derive the tail from the shape of the transform around 0 and vice versa. As an aside we mention that the case of integer-valued $\delta$ requires a separate, more delicate analysis, which we leave out here. 

An example of a random variable $X$ with a regularly varying tail is given by 
\begin{equation}\label{eq: RV example}
    {\mathbb P}(X>x) = \frac{C^\epsilon}{(C+x)^\epsilon},\quad x\geqslant 0, 
\end{equation}
for `tail index' $\epsilon\in {\mathbb R}\setminus {\mathbb N}$ and $C>0$; this class of distributions is contained in the larger family of `Pareto-type distributions', and is sometimes referred to as the Lomax distribution. 
In this case $\delta = \epsilon$ and $\theta = \Gamma(1-\epsilon)(-1)^{{\mathfrak n}_\epsilon}\,C^\epsilon.$
Observe that the $k$-th moment of $X$ (for $k\in{\mathbb N}$) exists if $\epsilon> k$. 

\medskip

For the remainder of this section we assume that the claim sizes are independent and identically distributed regularly-varying random variables, so $B\sim {\mathbb R}{\rm V}_\delta(\theta,L)$ for some $\delta\in{\mathbb R}^+\setminus {\mathbb N}$, $\theta>0$, and slowly varying function $L(\cdot)$. It means that each of the $B_i$ has a transform ${\mathscr U}(\cdot)$ that satisfies \eqref{eq:RV}. 
In the previous section we could prove that for phase-type claims all overshoots were phase-type too. In this section we establish a similar property: for regularly varying claims we show that the running maximum is also regularly varying. From there we can determine the asymptotics of $p_m(u,\beta)$ and $p_m(u)$ in the regime that $u$ grows large. Like in the previous section, we consider the situation in which the small clients' L\'evy processes correspond to deterministic drifts, i.e., we have that $Y(\cdot)\in{\mathscr L}[m,{\bs \lambda},{\mathscr U}(\cdot)\,{\bs 1}^{\top}, {\bs r}]$.

The main objective of our analysis is to  derive that the LST of the running maximum $\bar Y(T_\beta)$ has a regularly varying distribution. We do so by appealing to the recursion given in Theorem \ref{T1}, in combination with the Tauberian theorem mentioned above. 
\begin{lemma}\label{lemma: pi_m in case RV}
    For any $\beta > 0$ and  $n\in\{1,\dots,m\}$, we have that
    \begin{equation}\label{eq: pi_m in case RV}
        \pi_n(\alpha,\beta) = \sum_{i=0}^{\mathfrak{n}_\delta} p_{n,i}(\beta)\frac{\alpha^i}{i!} + \Phi_n(\beta)\,\alpha^\delta\, L(1/\alpha) + o(\alpha^\delta)
    \end{equation}
    as $\alpha\downarrow0$ ,
    where  \[\Phi_n(\beta) = \theta \sum_{j=1}^n\prod_{i=j}^n \frac{\lambda_i^\circ}{\lambda_i};\] 
    here, $p_{n,0}(\beta) = 1$ and  $p_{n,1}(\beta),\ldots,p_{n,\mathfrak{n}_\delta}(\beta)$ are constants.
\end{lemma}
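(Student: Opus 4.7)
The plan is to argue by induction on $n$, using the recursion of Theorem~\ref{T1} together with the regularly-varying expansion \eqref{eq:RV} of $\mathscr{U}(\cdot)$. Throughout, one exploits the elementary fact that for any integer $k\geqslant 1$ and slowly varying $L(\cdot)$ one has $\alpha^k\cdot \alpha^\delta L(1/\alpha)=o(\alpha^\delta)$ as $\alpha\downarrow 0$, so that multiplying an expansion of the form \eqref{eq: pi_m in case RV} by a power series in $\alpha$ with constant term $1$ leaves the coefficient in front of $\alpha^\delta L(1/\alpha)$ unchanged.

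For the base case $n=1$, I would substitute the expansion \eqref{eq:RV} of $\mathscr{U}(\alpha)$ and $\pi_0(\alpha,\beta)=1$ directly into \eqref{T1_eq}, and also expand $\lambda_1/(\lambda_1 - \alpha r_1)$ as a geometric series in $\alpha$. Noting that $\mathscr{U}(\lambda_1/r_1)$ and the factor multiplying it are constants, the term $\alpha\mathscr{U}(\lambda_1/r_1)/\lambda_1$ contributes only to the polynomial part. Matching the $\alpha^\delta L(1/\alpha)$ term then yields $\Phi_1(\beta)=\theta\cdot\lambda_1^\circ/\lambda_1$, which is exactly the formula for $n=1$.

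For the induction step, assume \eqref{eq: pi_m in case RV} holds for $n-1$. In the recursion for $\pi_n(\alpha,\beta)$, the quantities $\mathscr{B}_{m-n+1}(\lambda_n/r_n)$ and $\pi_{n-1}(\lambda_n/r_n,\beta)$ are constants, so the second term inside the bracket contributes only polynomially in $\alpha$. In the first term, I would expand the product $\mathscr{U}(\alpha)\pi_{n-1}(\alpha,\beta)$: since $\mathscr{U}(0)=p_{n-1,0}(\beta)=1$, the coefficient of $\alpha^\delta L(1/\alpha)$ in the product equals $\theta+\Phi_{n-1}(\beta)$ (each factor contributes its regularly-varying correction, multiplied by the constant term of the other). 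Expanding $\lambda_n/(\lambda_n - \alpha r_n)=1+O(\alpha)$ and using the slowly-varying observation above, the $O(\alpha)$ tail only produces $o(\alpha^\delta)$ contributions to the $\alpha^\delta L(1/\alpha)$ coefficient. Hence
\[
\Phi_n(\beta)=\frac{\lambda_n^\circ}{\lambda_n}\bigl(\theta+\Phi_{n-1}(\beta)\bigr),
\]
and a direct telescoping computation shows that this recurrence, together with $\Phi_1(\beta)=\theta\lambda_1^\circ/\lambda_1$, is solved by $\Phi_n(\beta)=\theta\sum_{j=1}^n\prod_{i=j}^n \lambda_i^\circ/\lambda_i$. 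The constants $p_{n,i}(\beta)$ for $i\leqslant\mathfrak{n}_\delta$ arise from the polynomial part of the expansion; that $p_{n,0}(\beta)=1$ is immediate from $\pi_n(0,\beta)=1$, which one can also verify directly from the recursion since $\beta/\lambda_n + \lambda_n^\circ/\lambda_n=1$.

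The only delicate point is the careful accounting of remainder terms: one needs to check that all cross-products between polynomial coefficients and the $\alpha^\delta L(1/\alpha)$ pieces either contribute to the stated $\Phi_n(\beta)$ or land in $o(\alpha^\delta)$, using the slowly-varying nature of $L(\cdot)$. This is routine but must be done uniformly through the recursion. The rest of the argument is straightforward bookkeeping.
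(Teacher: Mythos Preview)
Your proposal is correct and follows essentially the same approach as the paper's proof: induction on $n$ via the recursion of Theorem~\ref{T1}, expanding $\nu_n/(\nu_n-\alpha)$ as $1+O(\alpha)$, and extracting the $\alpha^\delta L(1/\alpha)$ coefficient from the product $\mathscr{U}(\alpha)\,\pi_{n-1}(\alpha,\beta)$ using that $b_0=p_{n-1,0}(\beta)=1$, leading to the same recurrence $\Phi_n(\beta)=\tfrac{\lambda_n^\circ}{\lambda_n}\bigl(\theta+\Phi_{n-1}(\beta)\bigr)$. The paper's argument is identical in structure and level of detail.
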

\begin{proof}
We prove \eqref{eq: pi_m in case RV} by induction. 

\medskip 

{\it --- Step 1.} For $n=1$, from Theorem \ref{T1}, we have that 
\begin{align*}
    \pi_1(\alpha,\beta) &= \frac{\beta}{\lambda_1} + \frac{\lambda_1^\circ}{\lambda_1}\frac{\nu_1}{\nu_1 - \alpha}\left(\mathscr{B}(\alpha) - \frac{\alpha}{\nu_1}\mathscr{B}(\nu_1)\right) \\
    &= \frac{\beta}{\lambda_1} + \frac{\lambda_1^\circ}{\lambda_1}\left(1 + \frac{\alpha}{\nu_1} + o(\alpha)\right)\left(\sum_{i=0}^{{\mathfrak n}_\delta} b_i \frac{\alpha^i}{i!} + \theta\alpha^\delta \,L(1/\alpha) + o(\alpha^\delta) - \frac{\alpha}{\nu_1}\mathscr{B}(\nu_1)\right) \\
    &= \sum_{i=0}^{\mathfrak{n}_{\delta}} p_{1,i}(\beta) \frac{\alpha^i}{i!} + \theta\,\frac{\lambda_1^\circ}{\lambda_1} \alpha^\delta L(1/\alpha) + o(\alpha^\delta),
\end{align*}
where we remark that $\theta\,\lambda_1^\circ/\lambda=\Phi_1(\beta)$. Note that
\begin{equation}
    p_{1,0}(\beta) = \frac{\beta}{\lambda_1} + \frac{\lambda_1^\circ}{\lambda_1}\cdot 1 \cdot b_0 =  \frac{\beta}{\lambda_1^\circ+\beta} + \frac{\lambda_1^\circ}{\lambda_1^\circ+\beta} = 1,
\end{equation}
where we use that  $b_0 = 1$.

\medskip

{\it --- Step 2.} By the induction hypothesis we may assume, for any $n\in\{1,\dots,m\}$,  that for all $k<n$ we have that $\pi_{k}(\alpha,\beta)$ is of the form
\begin{align*}
     \sum_{i=0}^{{\mathfrak n}_\delta} p_{k,i}(\beta) \frac{\alpha^i}{i!} + \Phi_k(\beta)\,\alpha^\delta \,L(1/\alpha) + o(\alpha^\delta),
\end{align*}
with $p_{k,0}(\beta) = 1$. The next step is to use the recursion given in  Theorem \ref{T1}: we find for $\pi_n(\alpha,\beta)$, with $\nu_n = \lambda_n/r_n$:
\begin{align*}
    \pi_n(\alpha,\beta) &= \frac{\beta}{\lambda_n} + \frac{\lambda_n^\circ}{\lambda_n}\left(1 + \frac{\alpha}{\nu_n} + o(\alpha)\right) \cdot\Bigg({\mathscr P}_n^{(1)}(\alpha,\beta)\,{\mathscr P}_n^{(2)}(\alpha,\beta) -\frac{\alpha}{\nu_n}\mathscr{B}(\nu_n)\,\pi_{n-1}(\nu_n,\beta)\Bigg),
\end{align*}  
where
\begin{align*}
    {\mathscr P}_n^{(1)}(\alpha,\beta)&:=\sum_{i=0}^{{\mathfrak n}_\delta} b_i \frac{\alpha^i}{i!} + \theta\alpha^\delta \,L(1/\alpha) + o(\alpha^\delta),\\
    {\mathscr P}_n^{(2)}(\alpha,\beta) &:= \sum_{i=0}^{{\mathfrak n}_\delta} p_{n-1,i}(\beta) \frac{\alpha^i}{i!} + \Phi_{n-1}(\beta)\,\alpha^\delta \,L(1/\alpha) + o(\alpha^\delta).
\end{align*}
By collecting terms that are of the same order, it is readily verified that $\pi_n(\alpha,\beta)$ can be written in the form
    \[ \sum_{i=0}^{\mathfrak{n}_\delta} p_{n,i}(\beta) \frac{\alpha^i}{i!} + \frac{\lambda_n^\circ}{\lambda_n}\big(\,b_0\,\Phi_{n-1}(\beta) + p_{n-1,0}(\beta)\,\theta\,\big)\,\alpha^\delta L(1/\alpha) + o(\alpha^\delta).
\]
for some coefficients $p_{n,i}(\beta)$. Now, since $b_0 = p_{n-1,0}(\beta) = 1$, we find
\begin{align*}
    p_{n,0}(\beta) &= \frac{\beta}{\lambda_n} + \frac{\lambda_n^\circ}{\lambda_n}\cdot 1 \cdot b_0 \cdot p_{n-1,0}(\beta) = \frac{\beta}{\lambda_n} + \frac{\lambda_n^\circ}{\lambda_n} = 1.
\end{align*}
Furthermore, 
\begin{align*}
    \frac{\lambda_n^\circ}{\lambda_n}\left(\,b_0\,\Phi_{n-1}(\beta) + p_{n-1,0}(\beta)\,\theta\,\right) &= \frac{\lambda_n^\circ}{\lambda_n}\left(\theta \sum_{j=1}^{n-1}\prod_{i=j}^{n-1} \frac{\lambda_i^\circ}{\lambda_i} + \theta\right) = \Phi_n(\beta),
\end{align*}
where verification of the last equality requires a few elementary calculations. 
Upon combining the above findings, we conclude
\begin{align*}
     \pi_n(\alpha,\beta) = \sum_{i=0}^{{\mathfrak n}_\delta} p_{n,i}(\beta) \frac{\alpha^i}{i!} + \Phi_n(\beta)\,\alpha^\delta \,L(1/\alpha) + o(\alpha^\delta),
\end{align*}
with $p_{n,0}(\beta) = 1$.
We have thus proven the claim. 
\end{proof}

Recall the Tauberian theorem that we discussed above, i.e., the {equivalence} between the statements \eqref{eq:RV} and \eqref{eq:TAIL}.
The conclusion is that $\bar{Y}(T_\beta) \sim {\mathbb R}{\rm V}_\delta(\Phi_n(\beta),L)$ (conditional on $N(0)=n$, that is). By the Tauberian theorem we thus conclude that this is equivalent to 
\begin{equation*}
    \lim_{u\to\infty} \mathbb{P}_n(\bar{Y}(T_\beta) > u)\frac{u^\delta}{L(u)} = \Phi_n(\beta) \frac{(-1)^{\mathfrak{n}_\delta}}{\Gamma(1-\delta)},
\end{equation*}
for any $n$. Specializing to the case of $n=m$, this brings us to the following result.
\begin{theorem}
    Suppose $Y(\cdot)\in{\mathscr L}[m,{\bs \lambda},{\mathscr U}(\cdot)\,{\bs 1}^{\top}, {\bs r}]$  with ${\mathscr U}(\cdot)$ satisfying \eqref{eq:RV}. As $u\to\infty$,
    \begin{equation}
        p_m(u,\beta) \frac{u^\delta}{L(u)} \to \Phi_m(\beta) \frac{(-1)^{\mathfrak{n}_\delta}}{\Gamma(1-\delta)}.
    \end{equation}
\end{theorem}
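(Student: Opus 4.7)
The plan is to combine Lemma \ref{lemma: pi_m in case RV} with the Tauberian theorem stated in \eqref{eq:RV}--\eqref{eq:TAIL}, using that the lemma essentially already packages the entire inductive recursion into a convenient form. The key observation is that $\pi_m(\alpha,\beta) = {\mathbb E}_m\,e^{-\alpha \bar Y(T_\beta)}$ is itself an LST, so once we know its expansion as $\alpha\downarrow 0$, the Tauberian machinery converts this directly into tail asymptotics for $\bar Y(T_\beta)$ under ${\mathbb P}_m$.

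First I would specialize Lemma \ref{lemma: pi_m in case RV} to $n=m$, yielding
\[
\pi_m(\alpha,\beta) = \sum_{i=0}^{\mathfrak{n}_\delta} p_{m,i}(\beta)\,\frac{\alpha^i}{i!} + \Phi_m(\beta)\,\alpha^\delta\,L(1/\alpha) + o(\alpha^\delta)
\]
as $\alpha\downarrow 0$, with $p_{m,0}(\beta)=1$ and $\Phi_m(\beta) = \theta\sum_{j=1}^m\prod_{i=j}^m \lambda_i^\circ/\lambda_i$. I would then observe that this expansion is structurally identical to \eqref{eq:RV}, but now with the random variable $\bar Y(T_\beta)$ (conditional on $N(0)=m$) in place of $U$, and with the parameter pair $(\theta, L)$ replaced by $(\Phi_m(\beta), L)$. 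In particular, the slowly-varying function $L(\cdot)$ is inherited unchanged from the claim-size distribution, while the leading constant gets multiplied by $\Phi_m(\beta)/\theta$; the constants $p_{m,i}(\beta)$ for $i\geqslant 1$ play the role of $(-1)^i$ times the moments of $\bar Y(T_\beta)$.

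Next I would invoke the equivalence between \eqref{eq:RV} and \eqref{eq:TAIL} (Theorem 8.1.6 of \cite{BGT}): the expansion above implies
\[
\bar Y(T_\beta)\,\big|\,N(0)=m \;\sim\; {\mathbb R}{\rm V}_\delta\bigl(\Phi_m(\beta),\,L\bigr),
\]
which by \eqref{eq:TAIL} is equivalent to
\[
\lim_{u\to\infty}\,{\mathbb P}_m(\bar Y(T_\beta) > u)\,\frac{u^\delta}{L(u)} \;=\; \Phi_m(\beta)\,\frac{(-1)^{\mathfrak{n}_\delta}}{\Gamma(1-\delta)}.
\]
Since by definition $p_m(u,\beta) = {\mathbb P}_m(\bar Y(T_\beta) > u)$, this is precisely the statement of the theorem.

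Strictly speaking there is no real obstacle: all the genuine work was done in Lemma \ref{lemma: pi_m in case RV} (where the inductive computation identifies the precise coefficient $\Phi_m(\beta)$ of the singular term $\alpha^\delta L(1/\alpha)$), and the Tauberian theorem is invoked as a black box. The only point requiring a brief comment is the hypothesis $\delta\in{\mathbb R}^+\setminus{\mathbb N}$, which is built into the setup and guarantees that \eqref{eq:RV} and \eqref{eq:TAIL} are indeed equivalent; the integer case is explicitly excluded earlier in the section. Thus the proof is essentially a one-line deduction once Lemma \ref{lemma: pi_m in case RV} is in hand.
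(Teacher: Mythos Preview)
Your proposal is correct and matches the paper's own argument essentially line for line: the paper also invokes Lemma~\ref{lemma: pi_m in case RV} for $n=m$, reads off that $\bar Y(T_\beta)\sim {\mathbb R}{\rm V}_\delta(\Phi_m(\beta),L)$ conditional on $N(0)=m$, and then applies the Tauberian equivalence \eqref{eq:RV}--\eqref{eq:TAIL} to obtain the tail asymptotics.
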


\begin{remark}\label{R2}
    {\em The above theorem gives rise to the approximation, in the regime that $u$ is large,
\[p_m(u,\beta) \approx \frac{\Phi_m(\beta)}{\theta}\,{\mathbb P}(B>u)= \left(\sum_{j=1}^m\prod_{i=j}^m \frac{\lambda_i^\circ}{\lambda_i}\right)\,{\mathbb P}(B>u).\]
    Interestingly, the ruin probability inherits the tail behavior of the claims, in that our result reveals that the ruin probability is asymptotically proportional to the tail distribution of the claim sizes. This is essentially different from what happens in the model with an infinite customer-pool, where the tail index of  the ruin probability is one lower than the one of the claim sizes, i.e., the tail of the ruin probability is one degree heavier than the tail of the claim-size distribution; see e.g.\ the textbook accounts~\cite[Theorem 2.1]{AA} or \cite[Theorem 8.4]{DM}. }\hfill$\Diamond$
  \end{remark}

\begin{remark}\label{R3}
    {\em   In relation to Remark \ref{R2}, it is observed that, for any $j\in\{1,\ldots,m\}$, 
    \[\prod_{i=j}^m \frac{\lambda_i^\circ}{\lambda_i} = {\mathbb P}(M\geqslant m-j+1),\]
    where, as before, $M$ denotes the number of large claims before killing. Consequently, we have that, recalling that $M$ attains values in $\{0,1,\ldots,m\}$, 
    \[\sum_{j=1}^m\prod_{i=j}^m \frac{\lambda_i^\circ}{\lambda_i}= \sum_{j=1}^m\ {\mathbb P}(M\geqslant m-j+1) = \sum_{j=1}^m {\mathbb P}(M\geqslant j)={\mathbb E}\,M.\]
    In other words, we find the approximation
    \[p_m(u,\beta) \approx {\mathbb E}M\cdot{\mathbb P}(B>u); \]
    cf.\ for instance \cite[Lemma 2.2]{MB} or \cite[Lemma X.2.2]{ASM2}. This approximation is particularly appealing, and should be seen in relation to the fact \cite[Proposition X.1.7]{ASM2} that for our regularly varying random variables $B_1,\dots, B_n$ we have that ${\mathbb P}(B_1+\cdots+B_n>u)/{\mathbb P}(B_1>u)\to n$ as $u\to\infty$.
    \hfill$\Diamond$
    }
\end{remark}

Figure \ref{fig: asymp RV} shows a comparison between Laplace inverted, asymptotic and Monte Carlo based estimated values of the ruin probability for regular varying distributed claims. We worked with an instance of the model in which $\lambda^\circ_n = n$ and $r_n= n^2$. The claim sizes are independent and regularly varying random variables, distributed according to the cumulative distribution function given in \eqref{eq: RV example}, with parameters $C=1$ and $\epsilon=\frac{3}{2}$. The Laplace inverted values are derived numerically by applying Theorem \ref{T1} and Lemma~\ref{lemma: relation pi_n rho_n} for $\beta = 0$, where we approximated the LSTs $\mathscr{B}_n(\alpha)$ given by
\[\int_0^\infty e^{-\alpha u}\,\frac{3}{2}\frac{1}{(1+u)^{5/2}}\,\mathrm{d}u\]
numerically for the specific values of $\alpha$ needed for Stehfest's inversion algorithm. The asymptotic values are derived with the approximation in Remark \ref{R3}, and for the Monte Carlo based estimated values we used 20\,000 sample paths per $u$. 

\begin{figure}[ht]
    \centering
    \includegraphics{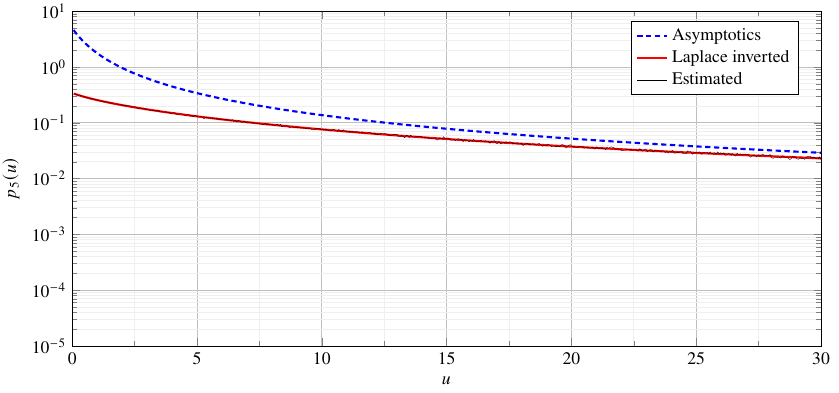}
    \caption{\label{fig: asymp RV}Laplace inverted, asymptotic and Monte Carlo based estimated values of the ruin probability, for $m=5$ and regular varying distributed claim sizes, in the model with only positive drifts.}
\end{figure}

\section*{VI. Overshoot analysis}
So far we essentially concentrated on the distribution of $\bar Y(T_\beta)$, which facilitated the identification of the asymptotics of $p_m(u,\beta)$ for large $u$. In the present section
we provide an exact analysis of more detailed quantities, that do not only incorporate whether level $u$ has been reached or not (prior to the exponential clock $T_\beta$, that is), but also describe by how much $u$ has been exceeded. We concentrate on the case that the small clients correspond to determinsitic drifts, i.e., $Y(\cdot)\in {\mathscr L} [m, {\bs \lambda}, {\bs {\mathscr B}}(\cdot),{\bs r}]$.

We define, for $n\in\{1,\ldots,m\}$ and $k\in\{0,\ldots,m-1\}$,
\[\eta_{n,k}(u,\alpha,\beta):= {\mathbb E}_n \Big(e^{-\alpha (Y(\tau(u))-u)}\,{\bs 1}_{\{N(\tau(u))=k,\tau(u)\leqslant T_\beta\}}\Big);\]
the LST of the overshoot over level $u$, intersected with the indicator function of the event that $u$ is reached before killing by the claim of the $(n-k)$-th client. 
In the present section we uniquely characterize the object $\eta_{n,k}(u,\alpha,\beta)$, and thus identify the distribution of the overshoot over level~$u$. Our findings are then used to develop an alternative characterization of $\pi_m(\alpha,\beta)$, i.e., a representation different from the one we gave in Theorem \ref{T1}.

\begin{remark} \em 
In this section we do consider the setting that the small clients are modelled as deterministic drifts, rather than the more general class of spectrally-positive L\'evy processes. The main reason for this is that in the analysis of the present section an important role is played by the overshoot over level $0$ (i.e., the random variable $Y(\tau(0))$). More specifically, we write $\bar Y(T_\beta)$ as the sum of such ladder heights, which can be a `degenerate notion' for general spectrally-positive L\'evy processes; for instance for Brownian motion $Y(\tau(0))$ equals $0$ almost surely. \hfill$\Diamond$ 
\end{remark}

The following recursion plays a key role in our analysis. 

\begin{lemma}\label{lemma: psi_mk}     Suppose $Y(\cdot)\in {\mathscr L} [m, {\bs \lambda}, {\bs {\mathscr B}}(\cdot),{\bs r}]$. 
   For any $u>0$, $\alpha\geqslant 0$, and for $n\in\{1,\ldots,m\}$ and $k\in\{0,\ldots,n-2\}$, 
    \begin{align}\label{eq: recursion psi k<m-1}
     \eta_{n,k}(u,\alpha,\beta)=&\:\lambda_n^\circ \int_0^\infty e^{-\lambda_n t}\int_0^{u + r_nt} \,\mathbb{P}(B_{m-n+1} \in {\mathrm d}s)\,\eta_{n-1,k}(u +r_nt- s, \alpha,\beta)\,{\mathrm d}t,
    \end{align}
    while for $n\in\{1,\ldots,m\}$,
    \begin{align}\label{eq: recursion psi k=m-1}
     \eta_{n,n-1}(u,\alpha,\beta)=&\:\lambda_n^\circ  \int_0^\infty e^{-(\lambda_n - \alpha r_n)t} \int_{u + r_nt}^\infty e^{-\alpha (s-u)}\,\mathbb{P}(B_{m-n+1}\in \mathrm{d}s)  \, {\mathrm d}t.
    \end{align}
\end{lemma}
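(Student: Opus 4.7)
My approach is to condition on the time and size of the first major claim submitted after time $0$, and to exploit the strong Markov property together with the memorylessness of the killing clock $T_\beta$ to express the continuation in terms of $\eta_{n-1,k}$. Starting from $N(0)=n$ and $Y(0)=0$, the next major claim arrives at an exponentially distributed time $T$ with rate $\lambda_n^\circ$, independent of $T_\beta$. Between $0$ and $T$ one has $Y(s)=-r_n s\leqslant 0$, so exceedance of $u>0$ cannot happen before $T$; in particular $\{\tau(u)\leqslant T_\beta\}$ forces $T\leqslant T_\beta$. Combining the density of $T$ with the survival probability of the killing clock and the distribution of the claim size, I would write the joint weight of the first claim occurring at time $t$ with size in $\mathrm{d}s$ and no killing by time $t$ as
\[\lambda_n^\circ e^{-\lambda_n^\circ t}\cdot e^{-\beta t}\cdot\mathbb{P}(B_{m-n+1}\in\mathrm{d}s)\,\mathrm{d}t=\lambda_n^\circ e^{-\lambda_n t}\,\mathbb{P}(B_{m-n+1}\in\mathrm{d}s)\,\mathrm{d}t,\]
where $\lambda_n=\lambda_n^\circ+\beta$.

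I would then split on whether exceedance occurs at this very first claim. At the claim epoch, $Y(T)=-r_n T+B_{m-n+1}$, so exceedance at $T$ happens iff $B_{m-n+1}>u+r_n T$. For $k=n-1$ the exceedance must occur at this claim, since the process transitions from state $n$ to state $n-1$ at time $T$ and any later exceedance would give $N(\tau(u))<n-1$. On $\{B_{m-n+1}=s>u+r_n t\}$ the overshoot equals $(s-u)-r_n t$, so
\[\mathbb{E}\bigl[e^{-\alpha(Y(\tau(u))-u)}\,\bigm|\,T=t,\,B_{m-n+1}=s\bigr]=e^{-\alpha(s-u)}\,e^{\alpha r_n t},\]
and absorbing $e^{\alpha r_n t}$ into $e^{-\lambda_n t}$ yields exactly \eqref{eq: recursion psi k=m-1}. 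For $k\leqslant n-2$, the first claim cannot trigger exceedance, so $s\leqslant u+r_n t$, after which the process is in state $n-1$ at level $-r_n t+s$. By the strong Markov property, the memorylessness of $T_\beta$ and the spatial translation invariance of the dynamics in each state of $N$, the conditional contribution from this $(t,s)$-slice equals $\eta_{n-1,k}(u+r_n t-s,\alpha,\beta)$, and integrating over the admissible region produces \eqref{eq: recursion psi k<m-1}.

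The principal obstacle is justifying this last strong Markov and translation step cleanly: one must observe that the joint law of $(Y(\tau(u))-u,N(\tau(u)),\tau(u))$, when the process starts from a level $y<u$ in state $n-1$, depends only on $u-y$ and on $n-1$, because the drifts $r_\cdot$, the inter-arrival rates $\lambda_\cdot^\circ$, the claim-size distributions and the killing rate $\beta$ are all functions of $N$ alone and are invariant under spatial and absolute-time shifts (the latter by memorylessness of $T_\beta$ and of the remaining inter-arrival times). Granted this identification, the two recursions follow by routine bookkeeping.
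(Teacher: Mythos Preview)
Your proof is correct and follows essentially the same approach as the paper: both arguments condition on the arrival time $t$ and size $s$ of the first major claim, absorb the killing via the factor $e^{-\beta t}$ to obtain $\lambda_n^\circ e^{-\lambda_n t}$, and then split according to whether $s>u+r_n t$ (giving \eqref{eq: recursion psi k=m-1} from the explicit overshoot $s-r_nt-u$) or $s\leqslant u+r_n t$ (giving \eqref{eq: recursion psi k<m-1} via the Markov/memoryless restart at residual level $u+r_nt-s$). Your write-up is in fact somewhat more explicit than the paper's in articulating the spatial translation invariance and the role of the strong Markov property.
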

\begin{proof}
We start by considering the (somewhat more straightforward) case that $k=n-1$.
Observe that, to make sure $\tau(u)\leqslant T_\beta$, first claim should arrive before the `exponential clock' $T_\beta$ expires, which happens with probability $\lambda_n^\circ /\lambda_n$. Then the time till the claim is exponentially distributed with parameter $\lambda_n$. Given this time is $t$, the claim should be larger than $u+r_nt$ to make sure the net cumulative claim process exceed $u$; if it has size $s\in(u+r_nt,\infty)$, we have that the overshoot $Y(\tau(u))-u$ equals $s-r_nt-u$. We thus obtain
\[\frac{\lambda_n^\circ }{\lambda_n}\int_0^\infty\lambda_n e^{-\lambda_n t} \int_{u+r_n t}^\infty e^{-\alpha(s-r_nt-u)}\,{\mathbb P}(B_{m-n+1}\in {\rm d}s)\,{\rm d}t,\]
equalling \eqref{eq: recursion psi k=m-1}.

The other scenario can be dealt with in essentially the same way. Now the claim size $s$ is in the interval $[0,u+r_nt)$, so that still a distance $u+r_nt-s$ still needs to be bridged in order to exceed level $u$ (which needs to be done with $n-1$ rather than with $n$ claims). This leads to the expression
\[\frac{\lambda_n^\circ }{\lambda_n}\int_0^\infty\lambda_n e^{-\lambda_n t} \int_0^{u+r_n t} \,{\mathbb P}(B_{m-n+1}\in {\rm d}s)\,\eta_{n-1,k}(u+r_nt-s,\alpha,\beta)\,{\rm d}t,\]
which is easily verified to reduce to \eqref{eq: recursion psi k<m-1}. \end{proof}

The equation in \eqref{eq: recursion psi k<m-1} corresponds to the contribution by the scenario that the net cumulative claim process remains below $u$ after the first claim arrival, whereas \eqref{eq: recursion psi k=m-1} represents the contribution by the scenario in which the level $u$ is already reached by the first claim.

The next objective is to use Lemma \ref{lemma: psi_mk} to set up, for $n\in\{1,\ldots,m\}$ and $k\in\{0,\ldots,n-1\}$, an iteration scheme for the {\it double} transform
\[\xi_{n,k}(\alpha,\beta,\gamma):= \int_0^\infty \gamma e^{-\gamma u}\,\eta_{n,k}(u,\alpha,\beta)\,{\rm d}u.\]
We incorporate a factor $\gamma$ in the integrand so that the object can be interpreted as the counterpart of $\eta_{n,k}(u,\alpha,\beta)$, but with the deterministic surplus level $u$ replaced by a random counterpart. Indeed, we can rewrite
\[\xi_{n,k}(\alpha,\beta,\gamma):= {\mathbb E}\, \eta_{n,k}(U_\gamma,\alpha,\beta),\]
with $U_\gamma$ exponentially distributed with parameter $\gamma$, drawn independently of the net cumulative claim process $Y(t)$ and the killing time $T_\beta.$

\begin{lemma}\label{lemma: recu xi}     Suppose $Y(\cdot)\in {\mathscr L} [m, {\bs \lambda}, {\bs {\mathscr B}}(\cdot),{\bs r}]$.  For any $\alpha\geqslant 0$, $\beta>0$, and $\gamma\geqslant 0$, and for $n\in\{1,\ldots,m\}$ and $k\in\{0,\ldots,n-2\}$, 
\begin{equation} \label{eq: recursion xi}
     \xi_{n,k}(\alpha,\beta,\gamma)=\frac{\lambda_n^\circ }{\lambda_n-\gamma r_n} \big(
  {\mathscr B}_{m-n+1}(\gamma)\,\xi_{n-1,k}(\alpha,\beta,\gamma) - \frac{\gamma}{\nu_n}{\mathscr B}_{m-n+1}(\nu_n)\,\xi_{n-1,k}(\alpha,\beta,\nu_n)
 \big).
\end{equation}
while for $n\in\{1,\ldots,m\}$,
\begin{equation}\xi_{n,n-1}(\alpha, \beta, \gamma)=\frac{\lambda_n^\circ \,\gamma}{\lambda_n-\alpha r_n}\left(\frac{{\mathscr B}_{m-n+1}(\gamma)-{\mathscr B}_{m-n+1}(\alpha)}{\alpha-\gamma}-\frac{{\mathscr B}_{m-n+1}(\gamma)-{\mathscr B}_{m-n+1}(\nu_n)}{\nu_n-\gamma}\right).\label{eq: initial xi}\end{equation}
\end{lemma}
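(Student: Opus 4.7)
The plan is to derive both identities by taking the $u$-Laplace transform (i.e., multiplying by $\gamma e^{-\gamma u}$ and integrating over $u\in(0,\infty)$) of the two expressions for $\eta_{n,k}(u,\alpha,\beta)$ supplied by Lemma~\ref{lemma: psi_mk}, then swapping the order of integration and evaluating the resulting one-dimensional integrals in closed form.

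For the recursive case $k\in\{0,\ldots,n-2\}$, I start from \eqref{eq: recursion psi k<m-1}. The substitution $\tau=r_n t$ turns the $t$-integral into one over $\tau$ with factor $(\lambda_n^\circ/r_n)e^{-\nu_n\tau}$, after which $w=u+\tau$ rewrites $\eta_{n,k}(u,\alpha,\beta)$ as $(\lambda_n^\circ/r_n)e^{\nu_n u}\int_u^\infty e^{-\nu_n w}H(w)\,{\rm d}w$, where $H(w):=\int_0^w \eta_{n-1,k}(w-s,\alpha,\beta)\,\mathbb{P}(B_{m-n+1}\in{\rm d}s)$ is a convolution in $w$. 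Multiplying by $\gamma e^{-\gamma u}$, integrating over $u$, and swapping orders so that $w$ is outermost reduces the inner $u$-integral to $\int_0^w \gamma e^{-(\gamma-\nu_n)u}\,{\rm d}u=\gamma(1-e^{-(\gamma-\nu_n)w})/(\gamma-\nu_n)$. The remaining $w$-integral thus splits into two Laplace transforms of $H$, evaluated at $\nu_n$ and $\gamma$ respectively; by the convolution theorem each equals $\mathscr{B}_{m-n+1}(\cdot)\,\xi_{n-1,k}(\alpha,\beta,\cdot)/\cdot$. Identity \eqref{eq: recursion xi} then follows on using $\lambda_n-\gamma r_n=-r_n(\gamma-\nu_n)$.

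For the initial case $k=n-1$, I insert \eqref{eq: recursion psi k=m-1} into the definition of $\xi_{n,n-1}$ and interchange integrations so that $s$ is outermost; the constraint $u+r_n t\leqslant s$ then restricts $t\in[0,s/r_n]$ and, for each $t$, $u\in[0,s-r_n t]$. Evaluating the elementary $u$- and $t$-integrals and applying $e^{-\alpha s}(1-e^{-(\nu_n-\alpha)s})=e^{-\alpha s}-e^{-\nu_n s}$ (together with its $\gamma$-analogue) identifies the three surviving $s$-integrals as $\mathscr{B}_{m-n+1}(\alpha)$, $\mathscr{B}_{m-n+1}(\gamma)$ and $\mathscr{B}_{m-n+1}(\nu_n)$, yielding the representation
\[
\xi_{n,n-1}(\alpha,\beta,\gamma)=\frac{\lambda_n^\circ\,\gamma}{\gamma-\alpha}\left[\frac{\mathscr{B}_{m-n+1}(\alpha)-\mathscr{B}_{m-n+1}(\nu_n)}{\lambda_n-\alpha r_n}-\frac{\mathscr{B}_{m-n+1}(\gamma)-\mathscr{B}_{m-n+1}(\nu_n)}{\lambda_n-\gamma r_n}\right].
\]

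The main obstacle is that this natural form is not literally \eqref{eq: initial xi}. However, both expressions equal $(\lambda_n^\circ\gamma/r_n)$ times the \emph{second divided difference} of $\mathscr{B}_{m-n+1}$ at the three nodes $\{\alpha,\gamma,\nu_n\}$, which is symmetric in its arguments; the stated form \eqref{eq: initial xi} merely organises this divided difference around the pair $\{\alpha,\gamma\}$ rather than $\{\alpha,\nu_n\}$. The equivalence thus follows from the symmetry of divided differences, or, equivalently, from a short direct cross-multiplication.
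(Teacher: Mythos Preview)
Your argument is correct and follows essentially the same route as the paper: transform the two identities of Lemma~\ref{lemma: psi_mk} in $u$, swap integration order, and evaluate. For $k\leqslant n-2$ your convolution packaging is exactly the paper's computation with $w=u+r_nt$; for $k=n-1$ the paper instead integrates $t$ first over $[0,(s-u)/r_n]$ and then performs the substitution $u\mapsto s-u$, which lands directly on \eqref{eq: initial xi} without the extra reconciliation step. Your observation that both forms equal $(\lambda_n^\circ\gamma/r_n)\,{\mathscr B}_{m-n+1}[\alpha,\gamma,\nu_n]$ (the symmetric second divided difference) is a clean way to see why the two representations coincide, and it also immediately explains the L'H\^opital remark following the lemma.
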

\begin{proof}
We start by evaluating $\xi_{n,k}(\alpha,\beta,\gamma)$ for $k=n-1$, corresponding to the scenario dealt with in \eqref{eq: recursion psi k=m-1}, which is the easier case. We have to change the integration order such that the `easy' integrations (the ones corresponding to $t$ and $u$, that is) are to be performed first. Following this procedure we obtain
\begin{align}
\notag\xi_{n,n-1}(\alpha, \beta, \gamma) &= \lambda_n^\circ  \int_0^\infty \,\gamma e^{-\gamma u}\int_0^\infty e^{-(\lambda_n - \alpha r_n)t} \int_{u + r_nt}^\infty e^{-\alpha(s-u)}\,\mathbb{P}(B\in \mathrm{d}s) \, {\mathrm d}t\,{\rm d}u\\
&=\lambda_n^\circ \,\gamma\int_{s=0}^\infty e^{-\alpha s} \,{\mathbb P}(B_{m-n+1}\in {\rm d}s)\int_{u=0}^s e^{-(\gamma-\alpha)u}\int_{t=0}^{(s-u)/r_n} e^{-(\lambda_m - \alpha r_n)t} \,{\rm d}t\,{\rm d}u\notag \\
&= \frac{\lambda_n^\circ \,\gamma}{\lambda_n-\alpha r_n}\int_{s=0}^\infty e^{-\alpha s} \,{\mathbb P}(B_{m-n+1}\in {\rm d}s)\int_{u=0}^s e^{-(\gamma-\alpha)u}\left(1- e^{-(\lambda_n-\alpha r_n)(s-u)/r_n}\right)\,{\rm d}u \notag\\
&= \frac{\lambda_n^\circ \,\gamma}{\lambda_n-\alpha r_n}\int_{s=0}^\infty e^{-\gamma s} \,{\mathbb P}(B_{m-n+1}\in {\rm d}s)\int_{u=0}^s e^{-(\alpha-\gamma)u}\left(1- e^{-(\lambda_n-\alpha r_n)u/r_n}\right)\,{\rm d}u \notag\\
&=\frac{\lambda_n^\circ \,\gamma}{\lambda_n-\alpha r_n}\int_{s=0}^\infty e^{-\gamma s} \,{\mathbb P}(B_{m-n+1}\in {\rm d}s)\int_{u=0}^s \left(e^{-(\alpha-\gamma)u}- e^{-(\lambda_n/r_n-\gamma)u}\right)\,{\rm d}u \notag\\
&=\frac{\lambda_n^\circ \,\gamma}{\lambda_n-\alpha r_n}\int_{s=0}^\infty e^{-\gamma s} \,{\mathbb P}(B_{m-n+1}\in {\rm d}s) \left(\frac{1-e^{-(\alpha-\gamma)s}}{\alpha-\gamma}-\frac{1-e^{-(\lambda/r_n-\gamma)s}}{\lambda_n/r_n-\gamma}\right), \notag
\end{align}
which corresponds to \eqref{eq: initial xi}.

In case $k \in\{0,\ldots,n-2\}$ the computation of $\xi_{n,k}(\alpha,\beta,\gamma)$ becomes somewhat more involved. Performing the change of variable $w:=u+r_nt$ and again changing the order of integration,
\begin{align}\notag
 \xi_{n,k}(\alpha,\beta,\gamma)&=\lambda_n^\circ \int_0^\infty \,\gamma e^{-\gamma u}\int_0^\infty e^{-\lambda_n t}\int_0^{u + r_nt} \,\mathbb{P}(B_{m-n+1} \in {\mathrm d}s)\,\eta_{n-1,k}(u +r_nt- s, \alpha,\beta)\,{\mathrm d}t\,{\rm d}u\\
 &=\frac{\lambda_n^\circ \,\gamma}{r_n}\int_{s=0}^\infty{\mathbb P}(B_{m-n+1}\in{\rm d}s)\int_{w=s}^\infty e^{-(\lambda_n/r_n)w}\eta_{n-1,k}(w-s,\alpha,\beta)\int_{u=0}^w e^{-(\gamma-\lambda/r_n)u}\,{\rm d}u\,{\rm d}w\notag \\
 &=\frac{\lambda_n^\circ \,\gamma}{\lambda_n-\gamma r_n} \int_{s=0}^\infty{\mathbb P}(B_{m-n+1}\in{\rm d}s)\int_{w=s}^\infty \eta_{n-1,k}(w-s,\alpha,\beta)\left(
 e^{-\gamma w} - e^{-(\lambda_n/r_n)w}
 \right)\,{\rm d}w\notag \\
 &=\frac{\lambda_n^\circ \,\gamma}{\lambda_n-\gamma r_n} \int_{s=0}^\infty{\mathbb P}(B_{m-n+1}\in{\rm d}s)\Bigg(\int_0^\infty e^{-\gamma(s+w)}\eta_{n-1,k}(w,\alpha,\beta)\,{\rm d}w 
 \:-\notag\\
 &\hspace{5.62cm}\int_0^\infty e^{-(\lambda_n/r_n)(s+w)}\eta_{n-1,k}(w,\alpha,\beta) \,{\rm d}w
 \Bigg)\notag,
\end{align}
in which, after noting that the two double integrals in the last expression factorize, we immediately recognize \eqref{eq: recursion xi}.
\end{proof}

\begin{remark}
  {\em The expressions in \eqref{eq: recursion xi} and \eqref{eq: initial xi} contain fractions of which the denominator can attain the value $0$. It is observed though that for those values the numerator vanishes as well, and that the expressions can be evaluated by a straightforward application of L'H\^opital's rule. In particular,
  \[
  \xi_{n,n-1}(\alpha, \beta, \nu_n)=\frac{\lambda_n^\circ \,\gamma}{\lambda_n-\alpha r_n}\left(\frac{{\mathscr B}_{m-n+1}(\nu_n)-{\mathscr B}_{m-n+1}(\alpha)}{\alpha-\nu_n}+{\mathscr B}'_{m-n+1}(\nu_n)\right)\]
  for $n\in\{1,\ldots,m\}.$} \hfill$\Diamond$ 
\end{remark}

Relying on Lemma \ref{lemma: recu xi}, the algorithm identifies the transforms $\xi_{n,k}(\alpha,\beta,\gamma)$, for all  $n\in\{1,\ldots,m\}$ and $k\in\{0,\ldots,n-1\}$. We have thus uniquely characterized the joint distribution of the overshoot over level $u$ and the index of the client due to whom $u$ has been exceeded. 

\begin{theorem}\label{prop: alg}     Suppose $Y(\cdot)\in {\mathscr L} [m, {\bs \lambda}, {\bs {\mathscr B}}(\cdot),{\bs r}]$. 
For any $k\in\{0,\ldots,m-1\}$, the transform $\xi_{k+1,k}(\alpha,\beta,\gamma)$ follows from \eqref{eq: initial xi}. Subsequently, in case $k\in\{0,\ldots,m-2\}$, the transforms $\xi_{k+2,k}(\alpha,\beta,\gamma),\ldots,\xi_{m,k}(\alpha,\beta,\gamma)$ follow by repeatedly applying the recursion \eqref{eq: recursion xi}.
\end{theorem}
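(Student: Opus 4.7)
The plan is to fix $k \in \{0,\ldots,m-1\}$ and argue by induction on $n \geq k+1$, with the two parts of Lemma~\ref{lemma: recu xi} supplying the base case and the inductive step respectively. Since the statement is essentially algorithmic---asserting that a certain iterative scheme determines $\xi_{n,k}(\alpha,\beta,\gamma)$ for every admissible $(n,k)$---the proof reduces to checking that the iteration can be carried out unobstructed and that every quantity required on the right-hand side of \eqref{eq: recursion xi} is available from the preceding step.

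For the base case I take $n = k+1$, so that $\xi_{n,n-1}$ coincides with $\xi_{k+1,k}$. The closed-form expression \eqref{eq: initial xi} then delivers $\xi_{k+1,k}(\alpha,\beta,\gamma)$ as an explicit function of $\gamma$, involving only the LST ${\mathscr B}_{m-k}(\cdot)$ and the parameters $\lambda_{k+1}$, $r_{k+1}$, $\nu_{k+1}$. Apparent singularities at $\gamma = \alpha$ or $\gamma = \nu_{k+1}$ are removable, as noted in the remark following the lemma, and are resolved by a routine application of L'H\^opital's rule, so the output is a genuine function on $[0,\infty)$.

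For the inductive step, I restrict to $k \leq m-2$ and fix $n \in \{k+2,\ldots,m\}$; the condition $k \leq n-2$ ensures that the recursion \eqref{eq: recursion xi} applies. Assuming inductively that $\xi_{n-1,k}(\alpha,\beta,\cdot)$ has already been identified as a function of its third argument, I substitute both $\gamma$ and $\nu_n = \lambda_n/r_n$ into this function and combine the two evaluations via \eqref{eq: recursion xi} to obtain $\xi_{n,k}(\alpha,\beta,\gamma)$. Iterating from $n = k+1$ up to $n = m$ produces the full sequence $\xi_{k+1,k},\xi_{k+2,k},\ldots,\xi_{m,k}$, and the theorem follows. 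The only point worth flagging is a bookkeeping subtlety: the recursion requires the previous-level transform at two distinct third arguments, so it is essential that the base case supplies an explicit function of $\gamma$ rather than a single evaluation. With that in place, no genuine analytic obstacle remains.
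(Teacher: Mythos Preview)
Your proposal is correct and follows exactly the approach the paper intends: the theorem is stated in the paper as an immediate consequence of Lemma~\ref{lemma: recu xi}, with no separate proof given, and your write-up simply makes explicit the induction on $n\geqslant k+1$ that the lemma supports. The observation that the inductive step requires $\xi_{n-1,k}(\alpha,\beta,\cdot)$ as a function of its third argument (so that both $\gamma$ and $\nu_n$ can be plugged in) is a useful clarification but does not depart from the paper's reasoning.
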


In the remainder of this section the objective is to identify the LST of the running maximum over an exponentially distributed interval, as before denoted by $\pi_m(\alpha,\beta)$. To this end we define the LST pertaining to the overshoot over level $0$:
\[\zeta_{m,k}(\alpha, \beta):= {\mathbb E}_m \Big(e^{-\alpha Y(\tau(0))}\,{\bs 1}_{\{N(\tau(0))=k,\tau(0)\leqslant T_\beta\}}\Big)=\lim_{\gamma\to\infty}\xi_{m,k}(\alpha,\beta,\gamma).\]
Using the results of Lemma \ref{lemma: recu xi} and Theorem \ref{prop: alg}, we can find exact expressions for these $\zeta_{m,k}(\alpha, \beta)$, in terms of the transforms $\xi_{n,k}(\alpha,\beta,\gamma)$. Indeed, for $n \in \{1,\ldots,m\}$ and $k \in \{0,\ldots,n-2\}$ we have
\begin{align}\label{zeta1}
    \zeta_{n,n-1}(\alpha, \beta) &= \frac{\lambda_n^\circ }{r_n} \frac{\mathscr{B}_{m-n+1}(\nu_n) - \mathscr{B}_{m-n+1}(\alpha)}{\alpha-\nu_n}, \\
    \zeta_{n,k}(\alpha, \beta) &= \frac{\lambda_n^\circ }{\lambda_n}{\mathscr B}_{m-n+1}(\nu_n)\,\xi_{n-1,k}(\alpha,\beta,\nu_n).\label{zeta2}
\end{align}
In the following theorem we present a relation that facilitates a recursive procedure to evaluate the LST $\pi_m(\alpha, \beta)$ recursively, using the transforms $\zeta_{n,k}(\alpha, \beta)$.

\begin{theorem}\label{theorem: recu pi_m}     Suppose $Y(\cdot)\in {\mathscr L} [m, {\bs \lambda}, {\bs {\mathscr B}}(\cdot),{\bs r}]$. 
    We can evaluate $\pi_m(\alpha, \beta)$ recursively through, for any $\alpha\geqslant 0$, $\beta>0$,
    \begin{equation}\label{eq:recupi}
        \pi_m(\alpha,\beta) = \mathbb{P}_m(\tau(0) > T_\beta) + \sum_{k=0}^{m-1} \zeta_{m,k}(\alpha,\beta)\,\pi_{k}(\alpha,\beta),
    \end{equation}
    with $\pi_0(\alpha, \beta) = 1$ and
    \begin{equation*}
        \mathbb{P}_m(\tau(0) > T_\beta) = 1 - \sum_{k=0}^{m-1} \zeta_{m,k}(0,\beta).
    \end{equation*}
    The functions $\zeta_{n,k}(\alpha,\beta)$, for $n\in\{1,\ldots,m\}$ and $k\in\{0,\ldots,n-2\}$, follow from \eqref{zeta1}-\eqref{zeta2}. 
\end{theorem}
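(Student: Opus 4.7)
The plan is to prove the identity \eqref{eq:recupi} via a first-passage decomposition of $\bar Y(T_\beta)$ at level~$0$, combined with the strong Markov property at $\tau(0)$ and the memoryless property of the independent killing clock $T_\beta$. Under the positive-drift convention of this section, $Y(\cdot)$ starts at $Y(0) = 0$ and can strictly exceed $0$ only via a jump $B_n$; hence on $\{\tau(0) > T_\beta\}$ the process stays at or below $0$ throughout $[0,T_\beta]$, forcing $\bar Y(T_\beta) = 0$.

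First I would split the expectation defining $\pi_m(\alpha,\beta)$ according to whether $\tau(0)\leqslant T_\beta$ or not, and, when $\tau(0)\leqslant T_\beta$, further partition by the value of $N(\tau(0))$:
\[
\pi_m(\alpha,\beta) = \mathbb{E}_m\Big(e^{-\alpha\bar Y(T_\beta)}\,{\bs 1}_{\{\tau(0)>T_\beta\}}\Big) + \sum_{k=0}^{m-1}\mathbb{E}_m\Big(e^{-\alpha\bar Y(T_\beta)}\,{\bs 1}_{\{\tau(0)\leqslant T_\beta,\,N(\tau(0))=k\}}\Big),
\]
noting that since at least one jump must occur before the process can reach a positive level, $N(\tau(0))\in\{0,\ldots,m-1\}$ on $\{\tau(0)\leqslant T_\beta\}$. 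By the observation above, the first term collapses to $\mathbb{P}_m(\tau(0)>T_\beta)$.

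For the sum, I would invoke the strong Markov property at the stopping time $\tau(0)$: conditionally on $\{\tau(0)\leqslant T_\beta,\,N(\tau(0))=k\}$ and on $\mathcal{F}_{\tau(0)}$, the shifted process $\big(Y(\tau(0)+s) - Y(\tau(0))\big)_{s\geqslant 0}$ is an independent copy of a process in $\mathscr{L}[k,\cdot,\cdot,\cdot]$ with the primitives restricted to the last $k$ major clients. Simultaneously, by memorylessness of the killing clock, the residual time $T_\beta - \tau(0)$ is again exponentially distributed with parameter $\beta$ and independent of $\mathcal{F}_{\tau(0)}$. This yields the representation $\bar Y(T_\beta) \stackrel{\rm d}{=} Y(\tau(0)) + \bar Y^{(k)}(T_\beta')$ on $\{\tau(0)\leqslant T_\beta,\,N(\tau(0))=k\}$, where $\bar Y^{(k)}(T_\beta')$ is an independent running maximum for a process started in state $k$ and killed at an independent exponential time of parameter $\beta$. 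Taking expectations, the $k$-th summand factorizes as $\zeta_{m,k}(\alpha,\beta)\,\pi_k(\alpha,\beta)$, and \eqref{eq:recupi} follows. The formula for $\mathbb{P}_m(\tau(0)>T_\beta)$ is then obtained by setting $\alpha=0$ in \eqref{eq:recupi} and using $\pi_k(0,\beta)=1$ for all $k$.

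The main delicacy I foresee is the joint use of strong Markov and memorylessness, ensuring that the post-$\tau(0)$ dynamics of $Y(\cdot)$ are truly independent of $(\tau(0),Y(\tau(0)))$ and that the residual killing time $T_\beta - \tau(0)$ is independent of $\mathcal{F}_{\tau(0)}$ with the same exponential law as $T_\beta$. This is however standard once one observes that $\tau(0)$ is a stopping time with respect to the natural filtration of $Y$ and that the major-client interarrival times are independent exponentials, so the bookkeeping of the primitives of the post-$\tau(0)$ process in $\mathscr{L}[k,\cdot,\cdot,\cdot]$ requires no further probabilistic input.
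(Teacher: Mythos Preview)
Your proposal is correct and follows essentially the same route as the paper: split $\pi_m(\alpha,\beta)$ according to $\{\tau(0)>T_\beta\}$ versus $\{\tau(0)\leqslant T_\beta\}$, observe that $\bar Y(T_\beta)=0$ on the former event, partition the latter by $N(\tau(0))=k$, and use the strong Markov property together with the memorylessness of $T_\beta$ to factorize each summand as $\zeta_{m,k}(\alpha,\beta)\,\pi_k(\alpha,\beta)$; the expression for $\mathbb{P}_m(\tau(0)>T_\beta)$ then follows by plugging in $\alpha=0$. Your justification that $\bar Y(T_\beta)=0$ on $\{\tau(0)>T_\beta\}$ and that $N(\tau(0))\in\{0,\ldots,m-1\}$ is slightly more explicit than the paper's, but the argument is otherwise identical.
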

\begin{proof}
    First, we distinguish between the events of exceeding level 0 before killing and vice versa. When $\tau(0) > T_\beta$, we have that $\bar Y(T_\beta) = 0$, so that
    \begin{align*}
        \pi_m(\alpha, \beta) &= {\mathbb E}_m\left(e^{-\alpha \bar Y(T_\beta)}{\bs 1}_{\{\tau(0) \leqslant T_\beta\}}\right) + {\mathbb E}_m\left(e^{-\alpha \bar Y(T_\beta)}{\bs 1}_{\{\tau(0) > T_\beta\}}\right) \\
        &= {\mathbb E}_m\left(e^{-\alpha \bar Y(T_\beta)}{\bs 1}_{\{\tau(0) \leqslant T_\beta\}}\right) + {\mathbb E}_m\left({\bs 1}_{\{\tau(0) > T_\beta\}}\right)\\&= {\mathbb E}_m\left(e^{-\alpha \bar Y(T_\beta)}{\bs 1}_{\{\tau(0) \leqslant T_\beta\}}\right) + \mathbb{P}_m(\tau(0) > T_\beta).
    \end{align*}
    By the strong Markov property and the memoryless property, conditioning on $N(\tau(0)) = k$ for $k\in\{0,\ldots,m-1\}$, we find
    \begin{align*}
        {\mathbb E}_m\left(e^{-\alpha \bar Y(T_\beta)}{\bs 1}_{\{\tau(0) \leqslant T_\beta, N(\tau(0)) = k\}}\right) &= {\mathbb E}_m\left(e^{-\alpha Y(\tau(0))}{\bs 1}_{\{\tau(0) \leqslant T_\beta, N(\tau(0)) = k\}}\right) {\mathbb E}_k\left(e^{-\alpha \bar Y(T_\beta)}\right) \\
        &= \zeta_{m,k}(\alpha, \beta)\,\pi_k(\alpha,\beta).
    \end{align*}
    Consequently, using that $N(\tau(0))$ takes values in $\{0,\ldots,m-1\}$, 
    \begin{align*}
        {\mathbb E}_m\left(e^{-\alpha \bar Y(T_\beta)}{\bs 1}_{\{\tau(0) \leqslant T_\beta\}}\right) &=  \sum_{k=0}^{m-1} {\mathbb E}_m\left(e^{-\alpha \bar Y(T_\beta)}{\bs 1}_{\{\tau(0) \leqslant T_\beta, N(\tau(0)) = k\}}\right) = \sum_{k=0}^{m-1} \zeta_{m,k}(\alpha, \beta)\,\pi_k(\alpha,\beta).
    \end{align*}
    Upon combining the above, we find for $\pi_m(\alpha,\beta)$ the recursive relation \eqref{eq:recupi}. 
    Finally, as a consequence of the fact that $\pi_k(0, \beta) = 1$ for all $k\in\{0,\ldots,m-1\}$, we have
    \begin{align}\label{P tau Tbeta}
        1 &= \mathbb{P}_m(\tau(0) > T_\beta) + \sum_{k=0}^{m-1} \zeta_{m,k}(0, \beta),
    \end{align}
    from which the expression for $\mathbb{P}_m(\tau(0) > T_\beta)=1-p_m(0,\beta)$ follows.
\end{proof}
By taking the limit of $\beta\downarrow 0$ in \eqref{P tau Tbeta}, we find the probability of the net cumulative process never becoming positive, as stated in the following corollary.
\begin{corollary}     Suppose $Y(\cdot)\in {\mathscr L} [m, {\bs \lambda}, {\bs {\mathscr B}}(\cdot),{\bs r}]$. 
    For any $m\in{\mathbb N}$,
    \begin{align*}
        p_m(0) = \sum_{k=0}^{m-1} \zeta_{m,k}(0,0).
    \end{align*}
\end{corollary}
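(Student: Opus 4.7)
The plan is simply to take the limit $\beta\downarrow 0$ in identity \eqref{P tau Tbeta} of Theorem~\ref{theorem: recu pi_m}, namely
\[ 1 \,=\, \mathbb{P}_m(\tau(0) > T_\beta) + \sum_{k=0}^{m-1} \zeta_{m,k}(0,\beta), \]
and rearrange. Two things need to be checked: that the probability on the right converges to $1-p_m(0)$, and that each $\zeta_{m,k}(0,\beta)$ converges to $\zeta_{m,k}(0,0)$.

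First I would handle the probabilistic term. Since $T_\beta$ is exponential with mean $1/\beta$, the family $\{T_\beta\}_{\beta>0}$ is stochastically increasing as $\beta\downarrow 0$, and $T_\beta\to\infty$ almost surely along any sequence $\beta_k\downarrow 0$ (on a suitable coupling) or in distribution in general. Consequently the events $\{\tau(0)>T_\beta\}$ decrease to $\{\tau(0)=\infty\}$, so by monotone convergence
\[ \mathbb{P}_m(\tau(0)>T_\beta) \,\longrightarrow\, \mathbb{P}_m(\tau(0)=\infty) \,=\, 1-p_m(0). \]

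Second, I would justify continuity of $\zeta_{m,k}(0,\beta)$ at $\beta=0$. Inspecting the explicit formulas~\eqref{zeta1}--\eqref{zeta2}, the parameters $\lambda_n=\lambda_n^\circ+\beta$ enter only through the rational/LST expressions appearing in Lemma~\ref{lemma: recu xi} (for $\xi_{n,k}$) and in \eqref{zeta1}--\eqref{zeta2}. Each of these is manifestly continuous in $\beta$ at $0$: the LST\,s $\mathscr{B}_{m-n+1}(\cdot)$ are continuous, the quantities $\nu_n=\lambda_n/r_n$ are continuous in $\beta$, and the possible removable singularities at $\alpha=\gamma$ or $\gamma=\nu_n$ are dealt with by L'H\^opital's rule as already noted in the text. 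Hence $\zeta_{m,k}(0,\beta)\to\zeta_{m,k}(0,0)$ as $\beta\downarrow 0$ for each $k\in\{0,\ldots,m-1\}$.

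Combining the two limits and sending $\beta\downarrow 0$ in the displayed identity yields
\[ 1 \,=\, 1-p_m(0) + \sum_{k=0}^{m-1}\zeta_{m,k}(0,0), \]
which upon rearranging gives the claimed identity $p_m(0) = \sum_{k=0}^{m-1}\zeta_{m,k}(0,0)$. There is no serious obstacle here; the only delicate point is the continuity check on $\zeta_{m,k}(0,\beta)$, which however is immediate from its closed-form algebraic description.
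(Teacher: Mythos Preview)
Your proposal is correct and follows exactly the paper's approach: the paper simply states that the corollary follows ``by taking the limit of $\beta\downarrow 0$ in~\eqref{P tau Tbeta}'', without spelling out the justification. You have merely supplied the routine details (monotone/dominated convergence for $\mathbb{P}_m(\tau(0)>T_\beta)$ and continuity of the explicit formulas for $\zeta_{m,k}(0,\beta)$) that the paper leaves implicit.
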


Besides Theorem \ref{theorem: recu pi_m}, there is an alternative way of expressing $\pi_m(\alpha,\beta)$ in terms of the $\zeta_{n,k}(\alpha,\beta)$ (with $n\in\{0,\ldots,m\}$ and $k\in\{0,\ldots,n-1\}$). In this approach we distinguish between the scenario that the running maximum over the time interval $[0,T_\beta]$ is achieved {\it before} the $m$-th claim arrival and the scenario that it is achieved {\it at} the $m$-th claim arrival.
In line with what we have seen in the proof of Theorem~\ref{theorem: recu pi_m}, using that
$\zeta_{n,k}(0, \beta)= {\mathbb P}_n (N(\tau(0))=k,\tau(0)\leqslant T_\beta)$ we have
\begin{equation}
    \label{eq: last rec}1-\sum_{\ell=0}^{n-1} \zeta_{n,\ell} (0,\beta)= {\mathbb P}_n(\tau(0)>T_\beta).\end{equation}
Combining the above observations, we arrive at the following result, which can be seen as an explicit solution of the recursion stated in Theorem \ref{theorem: recu pi_m}.
\begin{theorem}\label{pr:p2}     Suppose $Y(\cdot)\in {\mathscr L} [m, {\bs \lambda}, {\bs {\mathscr B}}(\cdot),{\bs r}]$. 
  We can   evaluate $\pi_m(\alpha, \beta)$ through, for any $\alpha\geqslant 0$, $\beta>0$,
\begin{align*}\pi_m(\alpha,\beta)=\:&\sum_{m=i_0>i_1>\cdots>i_{j}>0}\left( \prod_{\ell =0}^{j-1} \zeta_{i_\ell,i_{\ell+1}} (\alpha,\beta)\right)\left(1-\sum_{\ell=0}^{i_j-1}\zeta_{i_j,\ell}(0,\beta)\right)\:+\\
&\sum_{m=i_0>i_1>\cdots>i_{j}=0}\left( \prod_{\ell =0}^{j-1} \zeta_{i_\ell,i_{\ell+1}} (\alpha,\beta)\right).
\end{align*}
\end{theorem}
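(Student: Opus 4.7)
The plan is to iterate the recursion of Theorem~\ref{theorem: recu pi_m} until every appearance of $\pi_k(\alpha,\beta)$ on the right-hand side has been replaced either by $\pi_0(\alpha,\beta)=1$ or by the constant $\mathbb{P}_k(\tau(0)>T_\beta)$. Since the inner sum in Theorem~\ref{theorem: recu pi_m} runs only over $k<m$, each substitution drops the subscript strictly, and the unfolding terminates after at most $m$ steps.

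I would organize the resulting expansion by the sequence of indices $m=i_0>i_1>\cdots>i_j$ visited along a given branch of the iteration. Each branch contributes the product $\prod_{\ell=0}^{j-1}\zeta_{i_\ell,i_{\ell+1}}(\alpha,\beta)$, and it terminates in exactly one of two ways. Either the substitution stops at some index $i_j>0$ with $\pi_{i_j}(\alpha,\beta)$ replaced by $\mathbb{P}_{i_j}(\tau(0)>T_\beta)$, which by \eqref{eq: last rec} applied with $n=i_j$ equals $1-\sum_{\ell=0}^{i_j-1}\zeta_{i_j,\ell}(0,\beta)$, producing the first sum in the claim; or it reaches $i_j=0$, where $\pi_0(\alpha,\beta)=1$, producing the second sum.

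Formally, I would proceed by induction on $m$. The base case $m=1$ is a direct verification from Theorem~\ref{theorem: recu pi_m} and $\pi_0\equiv 1$. For the inductive step, one application of Theorem~\ref{theorem: recu pi_m} gives
\[\pi_m(\alpha,\beta)=\mathbb{P}_m(\tau(0)>T_\beta)+\zeta_{m,0}(\alpha,\beta)+\sum_{i_1=1}^{m-1}\zeta_{m,i_1}(\alpha,\beta)\,\pi_{i_1}(\alpha,\beta),\]
where the $\zeta_{m,0}(\alpha,\beta)$ term is the length-one contribution to the second sum and $\mathbb{P}_m(\tau(0)>T_\beta)$ is the $j=0$ contribution to the first sum (empty product equal to one, times $1-\sum_{\ell=0}^{m-1}\zeta_{m,\ell}(0,\beta)$). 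Applying the induction hypothesis to each $\pi_{i_1}(\alpha,\beta)$ and prepending $m$ to the sequences produced then yields precisely the two sums of the statement.

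The main obstacle is purely combinatorial bookkeeping: verifying that every strictly decreasing chain $m=i_0>i_1>\cdots>i_j$ with $i_j\geqslant 0$ is generated exactly once, that chains ending at $i_j=0$ correspond to invoking the base case $\pi_0=1$ while chains ending at $i_j>0$ correspond to invoking the constant term of Theorem~\ref{theorem: recu pi_m}, and that the empty-chain case $j=0$ is correctly identified with $\mathbb{P}_m(\tau(0)>T_\beta)$ through \eqref{eq: last rec}. Once this accounting is settled, no further analytic input is required and the identity follows.
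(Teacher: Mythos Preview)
Your proposal is correct. Unfolding the recursion of Theorem~\ref{theorem: recu pi_m} by induction on $m$, and sorting the branches according to whether they terminate at $i_j=0$ (invoking $\pi_0\equiv 1$) or at some $i_j>0$ (invoking the constant term $\mathbb{P}_{i_j}(\tau(0)>T_\beta)$ via \eqref{eq: last rec}), does produce exactly the two displayed sums. The bookkeeping you flag as the main obstacle is routine: each strictly decreasing chain starting at $m$ is generated once, since at every step the recursion either stops (contributing the constant term) or selects a unique next index $i_{\ell+1}<i_\ell$.

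The paper's primary argument is different in spirit: it is a direct probabilistic proof based on decomposing $\bar Y(T_\beta)$ into ladder heights, partitioning according to the values of $N(\cdot)$ at the successive record times of $Y(\cdot)$, and reading off the two sums as the scenarios in which the last record occurs before, respectively at, the $m$-th claim arrival. The paper does, however, explicitly remark that one may alternatively verify that the claimed expression satisfies the recursion of Theorem~\ref{theorem: recu pi_m}; your induction is precisely that alternative, carried out in the forward (unfolding) direction rather than by substitution. The probabilistic route has the advantage of explaining \emph{why} the formula has this particular combinatorial shape, whereas your route is self-contained and requires no further interpretation beyond Theorem~\ref{theorem: recu pi_m}.
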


\begin{proof}
    In this proof a crucial role is played by the concept of {\it ladder height}, being the difference between two subsequent record values of the process $Y(t)$. Observe that the running maximum $\bar Y(T_\beta)$ can be expressed as the sum of (maximally $m$) of such ladder heights. The main idea is to work with a partition that keeps track of the values of the $N(t)$ process at all the times $Y(t)$ attains a new record value. 
    The first term of the right hand side captures all scenarios in which $\bar Y(T_\beta)$ is attained before the $m$-th claim arrival (bearing in mind the identity \eqref{eq: last rec}), whereas the second terms concerns all scenarios in which $\bar Y(T_\beta)$ is attained at the $m$-th claim arrival.

    One can alternatively prove this theorem by verifying that the postulated form satisfies the recursion of Theorem \ref{theorem: recu pi_m}.
\end{proof}

\section*{VII. Discussion and concluding remarks}
In this paper, we considered an insurance model with the unconventional feature of having a finite number of major clients, in addition to a large pool of small clients. This model thus represents a significant deviation from the standard Cramér-Lundberg model, which assumes an infinitely large pool of essentially homogeneous clients, justifying the use of Poisson arrivals and independent and identically distributed claim sizes.

At a more detailed level, the finitely many major clients are characterized via exponentially distributed inter-arrival times and corresponding claim-size distributions, while  the dynamics of the small clients  are modeled by means of spectrally-positive Lévy processes. We succeeded in characterizing the distribution of the running maximum of the net cumulative claim process, at an exponentially distributed point in time, in terms of its Laplace-Stieltjes transform. Furthermore, for a simplified model in which the small clients are left out, we were able to determine the asymptotics of the ruin probability for both phase-type and regularly varying distributed claims. Finally, for the same model, we also determined the distribution of the overshoot over an exponentially distributed level in terms of its Laplace-Stieltjes transform.

The model analyzed in this paper is an example of a Markov additive process with a non-irreducible background process. A general analysis of extreme values for this type of processes was performed in \cite{KMD}. Due to the fact that in the present paper we consider a {\it specific} Markov additive process with a non-irreducible background process, we were able to find considerably more explicit results. Our findings give rise to the two following interesting follow-up research questions.

\begin{itemize}
    \item[$\circ$] For our specific processes, we found that the Laplace-Stieltjes transform of the running maximum can be evaluated through a straightforward recursion. This raises the question: are there other instances in which similar recursions appear? One other example could be a model in which multiple major clients can arrive at the same time. In this case, the recursive step will likely depend on multiple previous terms (rather than just one).
    \item[$\circ$] In this paper, we have focused on the asymptotics of the ruin probability, within the simplified model in which the small clients correspond to deterministic drifts, considering two types of claim-size distributions. A natural question is: can we extend this analysis to other claim-size distributions? Of particular interest is the class of subexponential distributions, which includes, in addition to distributions with a regularly varying tail, the log-normal and Weibull distributions. Another related question arises: can we determine the asymptotics in models where the spectrally-positive Lévy processes do not correspond to deterministic drifts? In this context, one might consider situations where the small clients are modeled using a compound Poisson process or Brownian motion. However, identifying the asymptotics could prove challenging, as ruin may occur between the arrivals of the major clients.
\end{itemize}

\section*{Software and data availability}
The software used for the preparation of the figures presented in this paper is publicly available at \url{https://github.com/drutgers/Risk-theory-in-a-finite-customer-pool-setting}.

\section*{Acknowledgments}
The authors thank Onno Boxma (Eindhoven University of Technology) and Werner Scheinhardt (Twente University) for helpful discussions. 

\section*{CRediT authorship contribution statement}
\textbf{Michel Mandjes}: Conceptualization, Methodology, Validation, Formal analysis, Writing - original draft, review and editing, Supervision, Project administration, Funding acquisition.

\textbf{Daniël Rutgers}: Conceptualization, Methodology, Validation, Formal analysis, Investigation, Software, Data curation, Writing - original draft, review and editing, Visualization.

\section*{Declaration of competing interest}
There were no competing interests to declare which arose during the preparation of this article.

\section*{Funding Information}
The first author’s research has been funded by the NWO Gravitation project NETWORKS, grant number 024.002.003.

\section*{Declaration of Generative AI and AI-assisted technologies in the writing process}
The authors declare that no generative AI and/or AI-assisted technologies were used in the writing process.

\vb

{\small
}

\end{document}